\numberwithin{equation}{section}
\numberwithin{figure}{section}
\theoremstyle{plain}
\newtheorem{thm}{\protect\theoremname}[section]
  \theoremstyle{plain}
  \newtheorem{fact}[thm]{\protect\factname}
\newcounter{mainthm}
\theoremstyle{plain}
\newtheorem{main_thm}[mainthm]{Main Theorem}
  \theoremstyle{definition}
  \newtheorem{defn}[thm]{\protect\definitionname}
  \theoremstyle{remark}
  \newtheorem{rem}[thm]{\protect\remarkname}
  \theoremstyle{definition}
  \newtheorem{example}[thm]{\protect\examplename}
  \theoremstyle{plain}
  \newtheorem{prop}[thm]{\protect\propositionname}
  \theoremstyle{plain}
  \newtheorem{cor}[thm]{\protect\corollaryname}
  \theoremstyle{plain}
  \newtheorem{lem}[thm]{\protect\lemmaname}
  \theoremstyle{remark}
  \newtheorem{claim}[thm]{\protect\claimname}
\newcounter{assumptionctr}
\theoremstyle{plain}
\newtheorem{assumption}[assumptionctr]{Assumption}
  \providecommand{\claimname}{Claim}
  \providecommand{\corollaryname}{Corollary}
  \providecommand{\definitionname}{Definition}
  \providecommand{\examplename}{Example}
  \providecommand{\factname}{Fact}
  \providecommand{\lemmaname}{Lemma}
  \providecommand{\propositionname}{Proposition}
  \providecommand{\remarkname}{Remark}
\providecommand{\theoremname}{Theorem}
\begin{document}
\global\long\def\Aut{\operatorname{Aut}}
\global\long\def\C{\mathfrak{C}}
\global\long\def\acl{\operatorname{acl}}
\global\long\def\tp{\operatorname{tp}}
\global\long\def\qf{\operatorname{qf}}
\global\long\def\Nn{\mathbb{N}}
\global\long\def\id{\operatorname{id}}
\global\long\def\xx{\mathbf{x}}
\global\long\def\ded{\operatorname{ded}}
\global\long\def\cof{\operatorname{cof}}
\global\long\def\SS{\mathcal{P}}
\global\long\def\EM{\operatorname{EM}}
\global\long\def\dcl{\operatorname{dcl}}
\global\long\def\Autf{\operatorname{Aut}f_{L}}
\global\long\def\eq{\operatorname{eq}}

\global\long\def\pamod#1{\pmod#1}

\global\long\def\nf{\mbox{nf}}
\global\long\def\Uu{\mathcal{U}}
\global\long\def\dom{\operatorname{dom}}
\global\long\def\concat{\smallfrown}
\global\long\def\Nn{\mathbb{N}}
\global\long\def\mathrela#1{\mathrel{#1}}
\global\long\def\twiddle{\mathord{\sim}}
 \global\long\def\x{\times}
\global\long\def\diam{\operatorname{diam}}
\global\long\def\EZero{\mathbb{E}_{0}}
\global\long\def\sequence#1#2{\left\langle #1\left|\,#2\right.\right\rangle }
\global\long\def\set#1#2{\left\{  #1\left|\,#2\right.\right\}  }
\global\long\def\cardinal#1{\left|#1\right|}
\global\long\def\calO{\mathcal{O}}

\global\long\def\NTPT{\operatorname{NTP}_{\operatorname{2}}}
\global\long\def\ist{\operatorname{ist}}
\global\long\def\C{\mathfrak{C}}
\global\long\def\alt{\operatorname{alt}}
\global\long\def\assum{\smiley}
\global\long\def\leftexp#1#2{{\vphantom{#2}}^{#1}{#2}}
\global\long\def\mathordi#1{\mathord{#1}}
\global\long\def\ord{\mathbf{ord}}

\title{The Borel cardinality of Lascar strong types}

\author{Itay Kaplan, Benjamin Miller, and Pierre Simon}

\thanks{The first and second authors' research was supported in part by SFB
grant 878. }
\begin{abstract}
We show that if the restriction of the Lascar equivalence relation
to a KP-strong type is non-trivial, then it is non-smooth (when viewed
as a Borel equivalence relation on an appropriate space of types).
\end{abstract}
\maketitle

\section{Introduction}

Notions of strong type play an important role in the study of first-order
theories. A \emph{strong type} (over $\emptyset$) is a class of an
automorphism-invariant equivalence relation on $\C^{\alpha}$ which
is bounded (i.e., the quotient has small cardinality) and refines
equality of types. The phrase ``strong type'' by itself often refers
to a \emph{Shelah strong type,} which is simply a type over the algebraic
closure of $\emptyset$ (in $T^{\eq}$). In other words, two sequences
have the same Shelah strong type if they are equivalent with respect
to every definable equivalence relation with finitely many classes.
Refining this is the notion of \emph{KP strong type} ($\equiv_{KP}^{\alpha}$),
in which two sequences are equivalent if they are equivalent with
respect to every bounded type-definable equivalence relation. The
KP strong type can also be characterized as the finest notion of strong
type for which the corresponding quotient is a compact Hausdorff space.
Finally, the \emph{Lascar strong type} ($\equiv_{L}^{\alpha}$) is
the finest notion of strong type. The classes of $\equiv_{L}^{\alpha}$
coincide with the connected components of the \emph{Lascar graph}
on $\C^{\alpha}$, in which two sequences are neighbors if they lie
along an infinite indiscernible sequence. The \emph{Lascar distance}
$d$ is the associated graph distance. All of this is explained in
detail in Subsection \ref{sub:Model-theoretic-preliminaries}.

In \cite{Newelski}, Newelski established the following fundamental
facts: 
\begin{fact}
\label{fac:Newelski}\cite{Newelski} Suppose that $T$ is a complete
first-order theory and $\alpha$ is an ordinal. 
\begin{enumerate}
\item A Lascar strong type is type definable iff it has finite diameter.
\item If $Y$ is an $\mathordi{\equiv_{L}^{\alpha}}$-invariant closed set,
contained in some $p\in S_{\alpha}\left(\emptyset\right)$, on which
every $\mathordi{\equiv_{L}^{\alpha}}$-class has infinite diameter,
then $Y$ contains at least $2^{\aleph_{0}}$-many $\mathordi{\equiv_{L}^{\alpha}}$-classes. 
\item Lascar strong types of unbounded diameter are not $G_{\delta}$ sets
(when viewed as subsets of an appropriate space of types, as explained
in Subsection \ref{sub:Context}).
\item If $T$ is small (i.e., $T$ is countable and the number of finitary
types over $\emptyset$ is countable), then $\mathordi{\equiv_{L}^{n}}=\mathordi{\equiv_{KP}^{n}}$
for all $n<\omega$ (i.e., the two notions of type agree on finite
sequences).
\end{enumerate}
\end{fact}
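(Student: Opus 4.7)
The plan is to establish the four items in order, with (1) and (3) providing the tools for (2) and (4); the technical weight sits squarely in items (2)--(3). For (1), I would use compactness in the Stone space. The relation $E(x,y)$ asserting that $x$ and $y$ lie on a common infinite indiscernible sequence is type-definable, so each composition $E^{n}$ is type-definable as well, and $[a]_L = \bigcup_{n} E^{n}[a]$ is a countable increasing union of closed sets in $S_\alpha(a)$. The easy direction ($\Leftarrow$) is that diameter $\le n$ forces $[a]_L = E^{n}[a]$, hence type-definable. For the converse, if $[a]_L$ is closed in $S_\alpha(a)$, the decreasing open sets $S_\alpha(a) \setminus E^{n}[a]$ have empty intersection with the compact set $[a]_L$, so compactness yields some $E^{N}[a] = [a]_L$.

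For (3), I would assume $[a]_L$ is $G_\delta$ in the appropriate type space of Subsection \ref{sub:Context}, and is thus Polish in the induced topology. Writing $[a]_L = \bigcup_{n} E^{n}[a]$ as a countable union of relatively closed sets, Baire category supplies some $E^{n}[a]$ with non-empty relative interior. The transitive continuous action of $\Autf$ on $[a]_L$ then promotes this to the statement that $E^{n}[a]$ is open in $[a]_L$; combined with an invariance/translation argument, this forces some $E^{N}[a]$ to exhaust $[a]_L$, contradicting infinite diameter. The main obstacle, as I see it, is setting up the Polish structure on the appropriate space of types \emph{precisely} enough that the $\Autf$-action is provably continuous and transitive on each Lascar class---this is where the framework of Subsection \ref{sub:Context} is indispensable, and where a naive Stone space will not suffice because Lascar classes there are generically not $G_\delta$.

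For (2), I would run a variant of the same Baire-category argument on $Y$ in place of a single class, with the partition of $Y$ into Lascar classes playing the role of the $G_\delta$-hypothesis of (3). Concretely, if $|Y/\mathord{\equiv_L^\alpha}| < 2^{\aleph_0}$, then either by a direct construction (iteratively selecting a non-meager class and then applying the reasoning of (3) to that class in its relative topology), or via Silver's dichotomy applied to the analytic equivalence relation $\mathord{\equiv_L^\alpha}|_Y$, one locates a Lascar class on which some $E^{N}$ is already cofinal. This forces finite diameter on that class, contradicting the hypothesis that every Lascar class in $Y$ has infinite diameter.

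Finally, (4) is a short deduction from (1) and (2). Suppose for contradiction that $[a]_L^{n} \subsetneq [a]_{KP}^{n}$ for some finite $a$. No Lascar class inside $[a]_{KP}$ can have finite diameter: otherwise (1) would make that class type-definable, and the minimality property of $\mathord{\equiv_{KP}}$ as the finest bounded type-definable equivalence relation would force the class to exhaust all of $[a]_{KP}$, against strict containment. Thus every Lascar class in $[a]_{KP}$ has infinite diameter, and (2), applied with $Y = [a]_{KP}$, produces at least $2^{\aleph_0}$ Lascar classes inside $[a]_{KP}$. Smallness of $T$, however, makes the Lascar-Galois type space on $n$-tuples countable (as a consequence of $|S_n(\emptyset)| \le \aleph_0$ and the corresponding countability of the relevant Polish type space), so there can be at most countably many Lascar classes on $n$-tuples, the desired contradiction.
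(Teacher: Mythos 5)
Two things stand out. First, a point about method: the paper does not prove Fact~\ref{fac:Newelski} from first principles at this stage; it cites Newelski and later recovers the statement as a corollary of Main Theorem~\ref{MainThmB} (the Choquet-space argument), which is the whole point of the paper. Your proposal instead reconstructs something close to Newelski's original line of attack (Baire category plus translation by $\Autf$), which is a legitimate alternative, but it is not the route taken here.

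Second, two of your four steps have genuine gaps. In item~(1), the converse direction is wrong as written: the sets $S_{\alpha}(a)\setminus E^{n}[a]$ are \emph{open}, and a decreasing family of open sets with empty intersection inside a compact set yields nothing via compactness --- the finite intersection property concerns closed sets. To make this direction work you should use the same Baire-category/translation argument you sketch for~(3): a closed $[a]_L$ in a compact Hausdorff type space is a Baire space, so some $E^{n}[a]$ has nonempty relative interior, and translating by $\Autf$ covers $[a]_L$ by finitely many translates of that piece, yielding finite diameter. In item~(4), the closing step --- ``smallness\ldots makes the Lascar-Galois type space on $n$-tuples countable, so there can be at most countably many Lascar classes'' --- is unjustified and looks circular. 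Smallness gives $\left|S_{n}(\emptyset)\right|\le\aleph_{0}$, but the number of Lascar classes on $n$-tuples is bounded only by $\left|S_{n}(M)\right|$ for a model $M$, which can be $2^{\aleph_{0}}$ even for small $T$ (DLO, say); bounding the number of Lascar classes below $2^{\aleph_{0}}$ from smallness alone is essentially the content of~(4) itself. The paper's derivation of~(4) sidesteps this by running through~(3) rather than~(2): for small $T$, the space $S_{n}(a)$ is a \emph{countable} Polish space, so every subset, in particular the image of $[a]_L$, is $G_{\delta}$; pulling back along the restriction map $S_{n}(M)\to S_{n}(a)$ and applying~(3) shows $[a]_L$ is $d$-bounded, and Remark~\ref{rem:Lascar is not closed} then forces $\mathordi{\equiv_{L}^{n}}=\mathordi{\equiv_{KP}^{n}}$. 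You should adopt that derivation for~(4). Finally, be aware that your arguments for (2) and~(3) implicitly require $T$ and $\alpha$ countable (so that the type space is Polish and Silver applies); Fact~\ref{fac:Newelski} as stated covers arbitrary $T$ and $\alpha$, which is precisely why the paper invokes the Choquet machinery of Section~\ref{sec:Uncountable-language} for the general case.
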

As opposed to Shelah and KP strong types, the space of Lascar strong
types does not come equipped with a Hausdorff topology. It is therefore
unclear to what category this quotient belongs. In \cite{PillayKrupinskiSolecki},
the authors suggest viewing it through the framework of descriptive
set theory (this idea was already mentioned in \cite{CasanovasPillayLascarZiegler}).
They formally interpret the notion of equality of Lascar-strong types
as a Borel equivalence relation over a compact Polish space, and then
consider the position of this relation in the Borel reducibility hierarchy.

Given two Polish spaces $X$ and $X'$ and two Borel equivalence relations
$E$ and $E'$ respectively on $X$ and $X'$, we say that $E$ is
\emph{Borel reducible} to $E'$ if there is a Borel map $f$ from
$X$ to $X'$ such that $x\mathrela Ey\iff f(x)\mathrela{E'}f(y)$
for all $x,y\in X$. Two relations are \emph{Borel bi-reducible} if
each is Borel reducible to the other. The quasi-order of Borel reducibility
is a well-studied object in descriptive set theory, and enjoys a number
of remarkable properties. One of them is given by the Harrington-Kechris-Louveau
dichotomy, which asserts that a Borel equivalence relation is either
smooth (Borel reducible to equality on $2^{\omega}$) or at least
as complicated as $\EZero$ (eventual equality on $2^{\omega}$).
This is explained in detail in Subsections \ref{sub:Preliminaries-on-definable}
and \ref{sub:Preliminaries-on-definable}. 

In this paper, we provide the following solution to the main conjecture
of \cite{PillayKrupinskiSolecki}. 
\begin{main_thm}
{[}Simplified version{]}\label{MainThmA-1} Suppose that $T$ is a
complete countable first-order theory. If $\equiv_{L}$ does not coincide
with $\equiv_{KP}$, then $\equiv_{L}$ is not smooth.
\end{main_thm}
\setcounter{mainthm}{0} 

We will actually prove a slightly stronger result (see Theorem \ref{thm:T is not smooth}).
Our proof will not make use of Fact \ref{fac:Newelski}, which we
will recover (for countable $T$) as a corollary.

Let us say a few words about our method. In \cite{NewelskiPetrykowski},
Newelski and Petrykowski introduce the notion of weakly generic types
for definable groups. An analog for groups of automorphisms was used
in Pelaez's thesis \cite{rodrigothesis} to give an alternate proof
of Fact \ref{fac:Newelski} (1). We follow this lead in our own proof.

A consequence of the proof of an early special case of the Harrington-Kechris-Louveau
dichotomy theorem is that if $X$ is a Polish space, $G$ is a Polish
group acting continuously on $X$, and the orbit equivalence relation
$E_{G}$ is $F_{\sigma}$, then either $E_{G}$ is smooth or $\EZero$
can be continuously embedded into $E_{G}$. This is related to \cite[Theorem 3.4.5]{BeckerKechris},
which gives a sufficient condition for embedding $\EZero$ in an equivalence
relation induced by a group action (namely that there is a dense orbit
and that $E_{G}$ is meager). 

As a corollary of the latter result, we establish a sufficient criterion
for embedding $\EZero$ into an equivalence relation $E$ whose classes
are each equipped with a metric. Roughly speaking, the group $G$
of homeomorphisms of the Cantor space whose graphs are contained in
$\EZero$ acts in a sufficiently rich fashion that it can move any
element to another which is arbitrarily close in the topological sense,
but far away in the sense of the metric associated with the class.
We show that if a similar property holds for $E$ then one can embed
$\EZero$ into $E$. 

Assuming that there is a Lascar-strong type of unbounded diameter,
it is thus natural to try to find a type $p$ whose orbit under the
group of Lascar-strong automorphisms is also sufficiently rich. When
$T$ is countable, we construct such a type formula-by-formula. At
each stage, we must make sure that we still have room to go on, namely,
that the partial type has many images which are at large Lascar distance
from each other. To this end, we make sure that the type stays weakly
generic. We actually need a slightly stronger property which we call
``properness''.

Main Theorem \ref{MainThmA} does not seem to be enough to deduce
Fact \ref{fac:Newelski} for uncountable theories. However, we adapt
our argument to also take care of this case. This is Main Theorem
\ref{MainThmB}. For uncountable languages, the space of types is
no longer a Polish space, so we do not state the theorem in terms
of Borel cardinality. Apart from that, the result is essentially the
same as for countable theories. In particular, it implies Newelski's
theorem. In order to prove it, we will need a little bit more from
the descriptive set theoretic side, namely, the notion of (strong)
Choquet space. This is used to replace completeness. In fact, eventually
we deal with a non-Hausdorff space. (A non-Hausdorff space will arise
as the space of types over a model $M$ with the topology induced
by a countable sub-language $L'$ of $L$ and a corresponding countable
model $M'$.)

\subsubsection*{Organization of the paper}

We have made an effort to keep this paper self-contained and accessible
to model theorists and descriptive set theorists alike. Thus we start
by giving all of the required definitions from both sides. In Section
\ref{sec:Descriptive-set-theoretic}, we state a set theoretic criterion
for non-smoothness. In Section \ref{sec:The-small-case}, we treat
a special case of the main theorem, where $T$ is small (hence Lascar
strong types coincide with KP strong types on finite tuples) and $\alpha$
is infinite. Although this result will not be used to prove the general
case, we thought it worthwhile to include, as the proof is considerably
simpler and gives insight into the general case. In Section \ref{sec:The-general-case},
we prove Main Theorem \ref{MainThmA-1} for all countable theories.
Finally, in section \ref{sec:Uncountable-language}, we prove Main
Theorem \ref{MainThmB}, thereby taking care of the general case.

\subsection{\label{sub:Model-theoretic-preliminaries}\textmd{ }Model-theoretic
preliminaries. }

Let $T$ be any complete first order theory. The theory $T$ may be
many sorted, but for the simplicity of the presentation one may assume
that it is one sorted. We recall some basic notions. We fix a sequence
of variables $\sequence{v_{i}}{i\in\ord}$. For the rest of this section,
$\alpha$ will be some ordinal. 
\begin{defn}
Suppose $M\models T$, $A\subseteq M$. Let $L_{\alpha}\left(A\right)$
the set of formulas in variables $\sequence{v_{i}}{i<\alpha}$ with
parameters from $A$ --- the set of \emph{formulas over $A$}. An
\emph{$\alpha$-type over $A$} (sometimes a \emph{partial} $\alpha$-type
over $A$) is a consistent subset of $L_{\alpha}\left(A\right)$.
An $\alpha$-type $p$ over $A$ is called \emph{complete} if for
any formula $\varphi\in L_{\alpha}\left(A\right)$, $\left\{ \varphi,\neg\varphi\right\} $
intersects $p$. We denote by $S_{\alpha}\left(A\right)$ the space
of all complete $\alpha$-types over $A$. For a tuple $a\in M^{\alpha}$,
let 
\[
\tp\left(a/A\right)=\set{\varphi\in L_{\alpha}\left(A\right)}{M\models\varphi\left(a\right)}.
\]
We write $a\equiv_{A}b$ for $\tp\left(a/A\right)=\tp\left(b/A\right)$.
If $A=\emptyset$, we omit it. If $p$ is an $\alpha$-type over $A$
and $\tp\left(a/A\right)\supseteq p$ we write $a\models p$. We say
that $p$ is \emph{realized} in $M$ if there exists some $a\in M^{\alpha}$
such that $a\models p$. 

We sometimes write $p\left(x\right)$ (respectively $\varphi\left(x\right)$)
when we want to stress that the free variables of $p$ (respectively
$\varphi$) are contained in the tuple of variables $x$. \end{defn}
\begin{rem}
\label{rem:stone-topology}The set $S_{\alpha}\left(A\right)$ is
naturally a compact Hausdorff topological space,\emph{ }the\emph{
Stone space} of $\alpha$-types over $A$, with clopen basis $\set{\left[\varphi\right]}{\varphi\in L_{\alpha}\left(A\right)}$,
where $\left[\varphi\right]=\set{p\in S_{\alpha}\left(A\right)}{\varphi\in p}$.\end{rem}
\begin{defn}
For a cardinal $\kappa$, a model $M\models T$ is called \emph{$\kappa$-saturated}
when for all sets $A\subseteq M$ of cardinality $<\kappa$ all types
in $S_{1}\left(A\right)$ are realized in $M$. The model $M$ is
\emph{$\kappa$-homogeneous} when for all $\alpha<\kappa$ and $a,b\in M^{\alpha}$
if $a\equiv b$ then there is an automorphism of $M$ mapping $a$
to $b$.
\end{defn}
Recall that the cardinality of $T$, $\left|T\right|$, is identified
with the cardinality of the set of formulas in $T$. 
\begin{fact}
\cite[Theorem 10.2.1]{Hod} For any cardinal $\kappa\geq\left|T\right|$,
and any model $M\models T$ there exists a $\kappa$-saturated, $\kappa$-homogeneous
model $N\succ M$ of $T$. 
\end{fact}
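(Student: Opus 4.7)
The plan is to build $N$ as the union of a continuous elementary chain $(M_\xi)_{\xi<\lambda}$ with $M_0 = M$, where $\lambda \geq \kappa$ is a suitable regular cardinal (take $\lambda = \kappa$ if $\kappa$ is itself regular, and $\lambda = \kappa^+$ otherwise). At each successor stage I pass to an elementary extension $M_{\xi+1} \succ M_\xi$ that discharges one task from a preassigned bookkeeping list, designed so that cofinally many stages address saturation and cofinally many address homogeneity. At limits I take unions.

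Saturation tasks are handled by compactness: given $A \subseteq M_\xi$ with $|A| < \kappa$ and $p \in S_1(A)$, the elementary diagram of $M_\xi$ together with $\{\varphi(c) : \varphi \in p\}$ for a new constant $c$ is finitely satisfiable in $M_\xi$, so there is an elementary extension realizing $p$. Homogeneity tasks are handled similarly: given tuples $a, b \in M_\xi$ of length $<\kappa$ with $a \equiv b$ and an element $e$ to be folded into the putative map on one side, the partial type obtained by translating $\tp(e/a)$ via the formal correspondence $a \mapsto b$ is consistent precisely because $a \equiv b$, so an elementary extension realizes it. Interleaving ``back'' and ``forth'' versions of this second task along the chain ensures that, for every $a \equiv b$ of length $<\kappa$ appearing in the construction, the partial elementary map $a \mapsto b$ can be extended to cover any prescribed additional element on either side.

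Setting $N = \bigcup_{\xi<\lambda} M_\xi$, regularity of $\lambda$ shows that every subset of $N$ of size $<\kappa$ lies in some single $M_\xi$, so every $1$-type over it was realized by a later saturation task; this gives $\kappa$-saturation of $N$. For $\kappa$-homogeneity, given $a\equiv b$ in $N$ of length $<\kappa$, the pieces produced at the homogeneity stages glue along the chain into a partial elementary map sending $a$ to $b$, which can then be extended by a further back-and-forth, using saturation of $N$, to a full automorphism of $N$. The main obstacle will be the bookkeeping: the tasks range over subsets of $N$, which is being constructed simultaneously. The standard fix is to enumerate, at each stage $\xi$, only the tasks over the already-built $M_\xi$, and to exploit the regularity of $\lambda$ to conclude that every task eventually arising in the final union is addressed at some later stage. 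The case of singular $\kappa$ is disposed of by the observation that a $\kappa^+$-saturated and $\kappa^+$-homogeneous extension is \emph{a fortiori} $\kappa$-saturated and $\kappa$-homogeneous.
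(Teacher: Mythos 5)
The paper does not prove this statement; it is cited directly to Hodges. So I am evaluating your argument on its own.

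Your plan has the right shape (a long elementary chain handling saturation and homogeneity tasks), but two steps do not survive scrutiny. First, the bookkeeping: you propose to discharge one task per successor stage over $\lambda$ stages, with $\lambda=\kappa$ or $\kappa^{+}$, enumerating at stage $\xi$ the tasks over $M_{\xi}$. But the number of $1$-types over a subset $A$ of $M_{\xi}$ of size $<\kappa$ can be as large as $2^{|A|+|T|}$, and $2^{<\kappa}$ can dwarf $\kappa^{+}$; there are then strictly more tasks than stages, so a ``one task per stage'' schedule cannot exhaust them. The standard fix is to \emph{batch}: at stage $\xi+1$ take a single elementary extension of $M_{\xi}$ realizing \emph{all} relevant types over $M_{\xi}$ simultaneously (this is no harder, by compactness applied to the union of all the corresponding diagrams with pairwise disjoint fresh constants), so that the chain length need only have cofinality $\geq\kappa$.

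The more serious gap is in the homogeneity argument. You build, along the chain, the scaffolding to extend a partial elementary map of size $<\kappa$ by one element, and then say the resulting partial map ``can then be extended by a further back-and-forth, using saturation of $N$, to a full automorphism of $N$.'' This last step fails: the back-and-forth that produces a genuine automorphism must run through all of $N$, and along the way the partial elementary map grows to size approaching $|N|$. Extending such a map requires $|N|$-saturation, but $N$ is only $\kappa$-saturated and in general $|N|>\kappa$; likewise your homogeneity tasks only cover maps of size $<\kappa$, so they give you nothing once the map grows. One cannot reduce strong $\kappa$-homogeneity of a large model to its $\kappa$-saturation plus local extension data this way. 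The standard constructions avoid the difficulty by organizing the back-and-forth \emph{along the chain itself}: one arranges that $M_{\xi+1}$ realizes all types over the whole of $M_{\xi}$ (equivalently, is $|M_{\xi}|^{+}$-saturated over $M_{\xi}$), so that a partial elementary map with domain and range inside some $M_{\xi}$ extends to one whose domain is all of $M_{\xi}$, and then iterates cofinally in the chain; alternatively one passes to a special model of suitable strong-limit cardinality, for which strong homogeneity is a standard consequence of the uniqueness theorem for special models. As written, your proof establishes $\kappa$-saturation of $N$ (modulo the bookkeeping repair) but not strong $\kappa$-homogeneity.
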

Fix some $\kappa\geq\left(2^{\left|T\right|}\right)^{+}$. We denote
by $\C$ a $\kappa$-saturated, $\kappa$-homogeneous model of $T$.
In model theory, this is usually referred to as the ``monster model''
of $T$ (and it is often harmless to assume in addition that $\C$
is $\left|\C\right|$-saturated and is very big). The convention is
that all sets of parameters and tuples we deal with are small, that
is, of cardinality $<\kappa$, and that they are all contained in
$\C$. Similarly all small models are assumed to be elementary substructures
of $\C$. 

Recall that for $A\subseteq\C$, $\Aut\left(\C/A\right)$ denotes
the group of automorphisms of $\C$ which fix $A$ pointwise. 
\begin{defn}
Let $A\subseteq\C$ a small set. A set $X\subseteq\C^{\alpha}$ is
called\emph{ $A$-type-definable} (or \emph{type-definable over $A$})
if it is empty or there is an $\alpha$-type $p$ over $A$ such that
\[
X=\set{a\in\C^{\alpha}}{a\models p}.
\]
 It is \emph{$A$-invariant} (or \emph{invariant over $A$}) when
for all $\sigma\in\Aut\left(\C/A\right)$, $\sigma^{\alpha}\left(X\right)=X$
(usually we omit $\alpha$ from this notation). When $A$ is omitted,
it is understood that $A=\emptyset$. 
\end{defn}
We define a ``topology'' on subsets of $\C^{\alpha}$.
\begin{defn}
\label{def:psuedo topology}Call a subset $X\subseteq\C^{\alpha}$
\emph{pseudo closed} if $X$ is type definable over some small set.
A  \emph{pseudo open} set is a complement of a pseudo closed set.
\emph{Pseudo $G_{\delta}$} sets and \emph{pseudo $F_{\sigma}$} sets
are defined in the obvious way. 
\end{defn}
By saturation $\C^{\alpha}$ is pseudo compact in the sense that any
small intersection of non-empty pseudo closed sets is non-empty. This
why we often say ``by compactness'', instead of ``by saturation''. 
\begin{rem}
\label{rem:restriction map}By compactness, for a small set $A\subseteq\C$,
the map $r_{\alpha,A}:\C^{\alpha}\to S_{\alpha}\left(A\right)$ defined
by $a\mapsto\tp\left(a/A\right)$ is pseudo closed, in the sense that
it sends pseudo closed sets to closed sets (in the Stone topology).
So $r_{\alpha,A}$ maps pseudo $F_{\sigma}$ subsets of $\C^{\alpha}$
to $F_{\sigma}$ subsets of $S_{\alpha}\left(A\right)$. 
\end{rem}
We also recall the notion of an indiscernible sequence:
\begin{defn}
Let $A$ be a small set. Let $\left(I,<\right)$ be some linearly
ordered set. A sequence $\bar{a}=\sequence{a_{i}}{i\in I}\in\left(\C^{\alpha}\right)^{I}$
is called \emph{$A$-indiscernible} (or \emph{indiscernible over $A$})
if for all $n<\omega$, every increasing $n$-tuple from $\bar{a}$
realizes the same type over $A$. When $A$ is omitted, it is understood
that $A=\emptyset$. 
\end{defn}
An easy but very important fact about indiscernible sequences is that
they exist. 
\begin{fact}
\label{fac:indiscernibles exist}$ $
\begin{enumerate}
\item \cite[Lemma 5.1.3]{TentZiegler} Let $\left(I,<_{I}\right)$, $\left(J,<_{J}\right)$
be small linearly ordered sets, and let $A$ be some small set. Suppose
$\bar{b}=\sequence{b_{j}}{j\in J}$ is some sequence of elements from
$\C^{\alpha}$. Then there exists an indiscernible sequence $\bar{a}=\sequence{a_{i}}{i\in I}\in\left(\C^{\alpha}\right)^{I}$
such that:

\begin{itemize}
\item For any $n<\omega$ and $\varphi\in L_{\alpha\cdot n}$, if $\C\models\varphi\left(b_{j_{0}},\ldots,b_{j_{n-1}}\right)$
for every $j_{0}<_{J}\ldots<_{J}j_{n-1}$ from $J$ then $\C\models\varphi\left(a_{i_{0}},\ldots,a_{i_{n-1}}\right)$
for every $i_{0}<_{I}\ldots<_{I}i_{n-1}$ from $I$.
\end{itemize}
\item \cite[proof of Proposition 3.1.4]{AvivThesis} If $M$ is a small
model and $a\equiv_{M}b$, then there is an indiscernible sequence
$\bar{c}=\sequence{c_{i}}{i<\omega}$ such that both $a\concat\bar{c}$
and $b\concat\bar{c}$ are indiscernible. 
\end{enumerate}
\end{fact}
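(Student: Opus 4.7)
For part (1), the plan is to argue by compactness and Ramsey's theorem. Consider the partial type $\pi=\pi(\sequence{x_i}{i\in I})$ containing the indiscernibility clauses $\varphi(x_{i_0},\ldots,x_{i_{n-1}})\leftrightarrow\varphi(x_{i'_0},\ldots,x_{i'_{n-1}})$ for every $\varphi\in L_{\alpha\cdot n}$ and every pair of $<_I$-increasing $n$-tuples from $I$, together with the inheritance clauses $\varphi(x_{i_0},\ldots,x_{i_{n-1}})$ for every such $\varphi$ that holds on every $<_J$-increasing $n$-tuple from $\bar b$. By compactness it suffices to realize each finite fragment of $\pi$. Such a fragment involves only finitely many formulas $\varphi_1,\ldots,\varphi_k$; apply Ramsey's theorem to the finite colouring of increasing tuples from $J$ by the truth values of the $\varphi_j$'s to obtain an arbitrarily long sub-sequence of $\bar b$ on which each $\varphi_j$ has a uniform truth value on all increasing tuples of its arity. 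Re-indexing that sub-sequence along the positions occurring in the fragment then realizes it, since the uniform value of any inherited $\varphi_j$ is forced to be ``true''.

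For part (2), the plan is to combine part (1) with the $|M|^+$-homogeneity of $\C$. First, using part (1) and compactness, produce an $M$-indiscernible sequence $\sequence{e_i}{i<\omega}$ with $e_0=a$: take any $M$-indiscernible $\omega$-sequence, extend it one step to the left (the left-extension type over the sequence is finitely consistent by indiscernibility), and then use $M$-homogeneity to move the leftmost element to $a$. Pick $\sigma\in\Aut(\C/M)$ with $\sigma(a)=b$, so that $(b,\sigma(e_1),\sigma(e_2),\ldots)$ is also $M$-indiscernible. We then seek a common tail $\bar c$: by compactness it suffices to show that the partial type over $M\cup\{a,b\}$ expressing ``$\bar c$ is indiscernible and both $a\concat\bar c$ and $b\concat\bar c$ are indiscernible'' is finitely satisfiable. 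For a finite fragment $\Phi$ of formulas, one realizes it by Ramsey-style extraction from a sufficiently long $M$-indiscernible sequence, coloured by the joint $\Phi$-types of $(a,\cdot)$ and $(b,\cdot)$ on finite increasing sub-tuples; a long enough monochromatic sub-sequence yields a finite $\bar c$ witnessing the fragment.

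The main obstacle is upgrading the hypothesis $a\equiv_M b$ to the condition $\tp(a/M\bar c)=\tp(b/M\bar c)$, since only then can a single $\bar c$ serve as a common ``left-extension'' for both $a$ and $b$. The assumption that $M$ is a model is what makes this possible: $M$-homogeneity of $\C$ together with the freedom to pass to an $M$-indiscernible sequence chosen sufficiently ``independent'' of $\{a,b\}$ (via iterated Ramsey colourings on the finite formula fragments in question) allows us to arrange $\tp(a/M\bar c)=\tp(b/M\bar c)$ in the limit. Once such a $\bar c$ is produced, any $\tau\in\Aut(\C/M\bar c)$ with $\tau(a)=b$ transports the $M$-indiscernibility of $a\concat\bar c$ onto that of $b\concat\bar c$, and $\emptyset$-indiscernibility follows a fortiori.
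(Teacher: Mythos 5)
Part (1) of your proposal is correct and is exactly the Ramsey-plus-compactness argument the paper indicates (modulo the routine bookkeeping for formulas of different arities, and the implicit assumption that $J$ is infinite so that Ramsey yields arbitrarily long homogeneous subsequences).

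Part (2) has a genuine gap. The intended proof is via ultrafilters: since $M$ is a model, $p=\tp\left(a/M\right)$ is finitely satisfiable in $M$, so it extends to a global type $q$ finitely satisfiable in $M$ (the average type of an ultrafilter on $M^{\alpha}$ concentrating on each $\varphi\left(M^{\alpha}\right)$, $\varphi\in p$); one then takes $\bar{c}$ to be a Morley sequence of $q$ over $M\cup\left\{ a,b\right\}$. Because each $c_{n}$ realizes a type over $Mabc_{<n}$ finitely satisfiable in $M$, and $a\equiv_{M}b$, both $a\concat\bar{c}$ and $b\concat\bar{c}$ come out $M$-indiscernible. Your substitute mechanism --- Ramsey extraction from ``a sufficiently long $M$-indiscernible sequence chosen sufficiently independent of $\left\{ a,b\right\}$'', coloured by the joint types of $\left(a,\cdot\right)$ and $\left(b,\cdot\right)$ --- does not deliver this. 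Ramsey only homogenizes the colouring: it makes the type of $\left(b,c_{i_{1}},\ldots,c_{i_{k}}\right)$ the same for all increasing tuples, but gives no control over \emph{which} type that is, and nothing forces it to coincide with the type of $\left(c_{j_{0}},\ldots,c_{j_{k}}\right)$, which is what indiscernibility of $b\concat\bar{c}$ requires. Concretely, in the random graph take $a\equiv_{M}b$ and an $M$-indiscernible sequence $\sequence{e_{i}}{i<\omega}$ with $e_{0}=a$ whose members are pairwise non-adjacent, non-adjacent to $a$, but all adjacent to $b$; its tail is already monochromatic for every such colouring, so every extraction returns a $\bar{c}$ for which $b\concat\bar{c}$ is not indiscernible. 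The ingredient missing from your argument is precisely finite satisfiability of $\tp\left(a/M\right)$ in $M$: the tail must be built as a limit of tuples \emph{from} $M$, over which $a$ and $b$ are genuinely indistinguishable. The ``$M$-homogeneity of $\C$'' you invoke instead holds over arbitrary small parameter sets and therefore cannot be the decisive use of the hypothesis that $M$ is a model.
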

Point (1) in Fact \ref{fac:indiscernibles exist} is proved using
Ramsey's theorem and compactness, while (2) is proved with ultrafilters. 
\begin{defn}
An equivalence relation $E$ on a set $X$ is called \emph{bounded}
if $\left|X/E\right|<\kappa$. \end{defn}
\begin{rem}
\label{rem:disscussion of bounded eq. relations}By saturation and
homogeneity, every invariant set is a union of types over $\emptyset$.
So by saturation if $E$ is an invariant equivalence relation with
an invariant domain $X\subseteq\C^{\alpha}$, it makes sense to consider
$E$ in any monster model. When $E$ is bounded, and $\left|\alpha\right|\leq\left|T\right|$
then $\left|X/E\right|\leq2^{\left|T\right|}$. To see that, let $M\models T$
be of size $\left|T\right|$. If $a,b\in X$ and $a\equiv_{M}b$ then
$\left(a,b\right)\in E$, since otherwise, by Fact \ref{fac:indiscernibles exist}
(2) and saturation, we may assume that $\left\langle a,b\right\rangle $
starts an indiscernible sequence of length $\kappa$. By homogeneity,
any two elements in it are not $E$-equivalent. Now the result follows
from the fact that $\left|S_{\alpha}\left(M\right)\right|\leq2^{\left|T\right|}$.
It is now easy to see that if $\C'\succ\C$ is another monster model
then every $E$-class in $\C'$ intersects $\C$, so there are no
``new'' classes. 
\end{rem}
We come to the central definition.
\begin{defn}
\label{def:Lascar distance}The\emph{ Lascar graph} on $\C^{\alpha}$
is the set $G_{\alpha}$ of pairs $\left(a,b\right)$ of distinct
elements of $\C^{\alpha}$ which lie along an infinite indiscernible
sequence. The Lascar metric $d_{\alpha}$ is the metric associated
with this graph. Let $\mathordi{\equiv_{L}^{\alpha}}$ denote the
equivalence relation on $\C^{\alpha}$ whose classes coincide with
the connected components of $G_{\alpha}$. The \emph{Lascar strong
type }of a tuple $a\in\C^{\alpha}$ is its $\mathordi{\equiv_{L}^{\alpha}}$-class.
We will omit $\alpha$ from the notation when it is clear from context. \end{defn}
\begin{rem}
\label{rem:distance 2}By Fact \ref{fac:indiscernibles exist} (2),
it follows that for $a,b\in\C^{\alpha}$ and $M\prec\C$, if $a\equiv_{M}b$
then $d\left(a,b\right)\leq2$. \end{rem}
\begin{fact}
(see e.g., \cite[Proposition 3.1.4]{AvivThesis}) The relation $\mathordi{\equiv_{L}^{\alpha}}$
is the finest bounded invariant equivalence relation on $\C^{\alpha}$.\end{fact}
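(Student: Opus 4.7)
The plan is to prove the two assertions implicit in ``finest bounded invariant equivalence relation'' separately: first that $\equiv_L^\alpha$ is itself bounded and invariant, and then that any other bounded invariant equivalence relation $E$ on $\C^\alpha$ satisfies $\mathordi{\equiv_L^\alpha}\subseteq E$.

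For the first part, invariance of $\equiv_L^\alpha$ is immediate from the definition, because automorphisms of $\C$ take infinite indiscernible sequences to infinite indiscernible sequences, hence preserve the Lascar graph and therefore its connected components. For boundedness, I would invoke Remark \ref{rem:distance 2}: fix a small model $M\prec\C$ of size $|T|$; then any two tuples realizing the same $\alpha$-type over $M$ are at Lascar distance at most $2$, so each Lascar strong type contains an entire $\tp(\cdot/M)$-class. This shows that the number of Lascar strong types is at most $|S_\alpha(M)|$, which (when $|\alpha|\le|T|$) is bounded by $2^{|T|}<\kappa$. (For larger $\alpha$ the same bound works once one restricts to tuples of the appropriate length; the point is that boundedly many types over a small model cover everything up to Lascar distance $2$.)

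For the second part, let $E$ be any bounded invariant equivalence relation on $\C^\alpha$. It suffices to show that $G_\alpha\subseteq E$, since $E$ is an equivalence relation and $\equiv_L^\alpha$ is generated by $G_\alpha$. Suppose for contradiction that $(a,b)\in G_\alpha$ but $(a,b)\notin E$. Then $a$ and $b$ sit inside some infinite indiscernible sequence $\sequence{a_i}{i<\omega}$, which by indiscernibility we may take to have $a_0=a$ and $a_1=b$ (replacing $\sequence{a_i}{i<\omega}$ by a shift of a subsequence if necessary, since any two pairs of distinct indices from an indiscernible sequence realize the same type). Applying Fact \ref{fac:indiscernibles exist}(1), I stretch this sequence to an indiscernible sequence $\sequence{a_i}{i<\kappa}$ with the same EM-type. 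For any $i<j<\kappa$, the pair $(a_i,a_j)$ realizes the same type over $\emptyset$ as $(a_0,a_1)$, so by $\kappa$-homogeneity of $\C$ there is an automorphism sending the latter to the former; invariance of $E$ then gives $(a_i,a_j)\notin E$. Thus the $a_i$ lie in $\kappa$-many distinct $E$-classes, contradicting boundedness of $E$.

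The only subtle step is arranging that $a=a_0$ and $b=a_1$ lie inside an indiscernible sequence indexed by $\omega$ in the \emph{given} order, but this is built into the definition of $G_\alpha$, and the subsequent stretching via Fact \ref{fac:indiscernibles exist}(1) preserves the EM-type of any finite tuple; so no obstacle arises. Overall the proof is short, and the only conceptual ingredient beyond the definitions is the standard ``stretching'' principle for indiscernible sequences together with the homogeneity of $\C$.
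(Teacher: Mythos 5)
Your proposal is correct and follows essentially the same route as the paper's sketch: boundedness of $\equiv_L^\alpha$ by $|S_\alpha(M)|$ via the observation that equality of types over a small model forces small Lascar distance, and minimality by stretching an indiscernible sequence witnessing $d_\alpha(a,b)\leq 1$ to length $\kappa$ and invoking homogeneity plus invariance to produce $\kappa$-many pairwise $E$-inequivalent tuples, contradicting boundedness. The only difference is cosmetic: the paper compresses the minimality argument into a reference to Remark \ref{rem:disscussion of bounded eq. relations}, while you spell out the re-indexing that puts $a,b$ at the start of the stretched sequence; both versions hinge on the same two ingredients (stretching via compactness/Fact \ref{fac:indiscernibles exist} and homogeneity of $\C$).
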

\begin{proof}
[Proof (sketch.)] If $E$ is some bounded invariant equivalence relation
on $\C^{\alpha}$ and $d_{\alpha}\left(a,b\right)\leq1$, then as
in Remark \ref{rem:disscussion of bounded eq. relations}, $\left(a,b\right)\in E$.
Similarly, $\mathordi{\equiv_{L}^{\alpha}}$ is bounded since it is
bounded by $\left|S_{\alpha}\left(M\right)\right|$ for any model
$M\models T$. \end{proof}
\begin{defn}
\label{def:Autfl}The group of \emph{Lascar strong automorphisms}
of $\C$ is the group generated by automorphisms $\sigma$ of $\C$
for which there is a small model $M\prec\C$ fixed pointwise by $\sigma$,
i.e., the group 
\[
\Autf\left(\C\right)=\left\langle \sigma\in\Aut\left(\C/M\right)\left|\, M\prec\C\right.\right\rangle .
\]
\end{defn}
\begin{fact}
\label{fac:Lascar Strong types, automorphisms, connection}(see e.g.,
\cite[Section 3.1]{AvivThesis}) 
\begin{enumerate}
\item The group of Lascar strong automorphisms is a normal subgroup of $\Aut\left(\C\right)$.
It consists of all automorphisms that fix all Lascar strong types
(of any length). 
\item The Lascar strong type equivalence relation is the orbit equivalence
relation of the group of Lascar strong automorphisms.
\item If $\sigma$ is a Lascar strong automorphism, then there is some $m<\omega$
such that for any tuple $c$ (of any length), $d\left(c,\sigma\left(c\right)\right)\leq m$.
In this case we say that $m$ \uline{bounds} $\sigma$.
\end{enumerate}
\end{fact}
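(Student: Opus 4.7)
The plan is to prove the three items in the order (3), (2), (1), because the later items rely on the earlier ones. The common observation underlying everything is: if $\tau \in \Aut(\C/M)$ for some small model $M$ and $c$ is any tuple, then $c \equiv_M \tau(c)$, so by Remark \ref{rem:distance 2} we have $d_\alpha(c, \tau(c)) \leq 2$.

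For (3), write $\sigma = \tau_1 \circ \cdots \circ \tau_k$ as a product of generators (each $\tau_i \in \Aut(\C/M_i)$ for some small $M_i$). Applying the observation to each factor and using the triangle inequality for $d_\alpha$ on each term of the telescoping product, one obtains $d_\alpha(c, \sigma(c)) \leq 2k$ for every tuple $c$. Thus $m := 2k$ uniformly bounds $\sigma$.

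For (2), the inclusion ``$\Autf(\C)$-orbits $\subseteq \equiv_L$-classes'' is precisely the same observation applied repeatedly. For the reverse inclusion, suppose $a \equiv_L^\alpha b$. Composing a chain of automorphisms along a finite path in the Lascar graph, it suffices to treat the case $d_\alpha(a,b) \leq 1$, so that $a$ and $b$ lie on an infinite indiscernible sequence. The key step is to upgrade this to the existence of a small model $M$ with $a \equiv_M b$: using Fact \ref{fac:indiscernibles exist}(1), extend the sequence to $\sequence{a_i}{i < \omega + \omega}$ with $a_0 = a$ and $a_1 = b$; indiscernibility immediately gives $a \equiv_B b$ for $B = \sequence{a_i}{\omega \leq i}$, since any two strictly increasing tuples from a common indiscernible sequence realize the same type over $\emptyset$. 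A standard argument (see the proof of \cite[Proposition 3.1.4]{AvivThesis}) then absorbs $B$ into a small model $M$ still witnessing $a \equiv_M b$, after which $\kappa$-homogeneity of $\C$ produces the desired $\tau \in \Aut(\C/M) \subseteq \Autf(\C)$ with $\tau(a) = b$.

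For (1), normality is transparent: for any $\pi \in \Aut(\C)$ and generator $\tau \in \Aut(\C/M)$, the conjugate $\pi \tau \pi^{-1}$ fixes $\pi(M)$ pointwise, and $\pi(M)$ is again a small elementary substructure of $\C$, so $\pi \tau \pi^{-1}$ is itself a generator. For the characterization, the forward direction is immediate from (2). Conversely, if $\sigma \in \Aut(\C)$ fixes every Lascar strong type of every length, apply $\sigma$ to an enumeration $c$ of all of $\C$; then $\sigma(c) \equiv_L c$, so by (2) there is some $\tau \in \Autf(\C)$ with $\tau(c) = \sigma(c)$, and since $c$ enumerates $\C$ this forces $\sigma = \tau \in \Autf(\C)$. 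The main (and only non-trivial) obstacle in the whole proof is the technical step in (2) of promoting ``$a$ and $b$ lie on a common infinite indiscernible sequence'' to ``$a \equiv_M b$ for some small model $M$''; everything else is essentially formal.
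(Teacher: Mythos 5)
The paper does not prove this Fact (it is cited to \cite[Section 3.1]{AvivThesis}), so there is no official proof to compare against; the relevant in-paper material is Remark \ref{rem:distance 2} and the proof of Remark \ref{rem:distance at most doubled}. Your overall architecture --- deducing (3) from the telescoping/triangle-inequality argument, reducing (2) to the case $d_{\alpha}(a,b)\le 1$, and deriving (1) from (2) --- is the standard one and is essentially sound. The proof of (3) and the normality part of (1) are correct as written.

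There is, however, a genuine gap at exactly the step you yourself identify as the crux of (2): passing from ``$a$ and $b$ lie on a common infinite indiscernible sequence'' to ``$a\equiv_{M}b$ for some small model $M$.'' Your route --- extend to $\omega+\omega$, observe $a\equiv_{B}b$ for the tail $B$, then ``absorb $B$ into a small model $M$ still witnessing $a\equiv_{M}b$'' --- is not a valid inference as stated: having the same type over a \emph{set} $B$ does not in general upgrade to having the same type over a \emph{model} containing $B$ (otherwise equality of types over $\emptyset$ would already imply Lascar equivalence). The indiscernibility of the tail must be used in an essential way, and no argument is given. The correct argument is the one the paper spells out in the proof of Remark \ref{rem:distance at most doubled}: extend the indiscernible sequence to length $\left(2^{\left|T\right|}\right)^{+}$, fix a model $M$ of size $\left|T\right|$, pigeonhole two terms $a_{i},a_{j}$ with $a_{i}\equiv_{M}a_{j}$, and then use indiscernibility and homogeneity to transport this to a model $M'$ with $a\equiv_{M'}b$. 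You should substitute that argument for the ``absorption'' step. A second, more minor, issue is in the converse direction of (1): you apply (2) to an enumeration $c$ of all of $\C$, but that tuple is not small, whereas the Lascar machinery (indiscernible sequences of $\alpha$-tuples, boundedness, item (2) itself) has only been set up for small $\alpha$. The standard fix is to apply (2) to an enumeration $\bar{m}$ of a single small model $M$: one gets $\tau\in\Autf\left(\C\right)$ with $\tau\left(\bar{m}\right)=\sigma\left(\bar{m}\right)$, so $\tau^{-1}\sigma\in\Aut\left(\C/M\right)$ is itself a generator, whence $\sigma\in\Autf\left(\C\right)$.
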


\begin{rem}
\label{rem:distance at most doubled} Suppose $a,b\in\C^{\alpha}$
and $d_{\alpha}\left(a,b\right)\leq n$. Then there is a Lascar strong
automorphism $\sigma$ of $\C$ bounded by $2n$ such that $\sigma\left(a\right)=b$. \end{rem}
\begin{proof}
(of Remark \ref{rem:distance at most doubled}) It is enough to establish
it in the case $d\left(a,b\right)\leq1$: if $d\left(a,b\right)\leq n$,
then there are $c_{0},\ldots,c_{n}$ with $a=c_{0}$, $c_{n}=b$ and
$d\left(c_{i},c_{i+1}\right)\leq1$ for all $i<n$. For each $i<n$,
we find some $\sigma_{i}$ bounded by $2$ that maps $c_{i}$ to $c_{i+1}$.
Let $\sigma=\sigma_{n-1}\circ\ldots\circ\sigma_{0}$. 

So suppose $I=\left\langle a_{i}\left|\, i<\omega\right.\right\rangle $
is an indiscernible sequence that starts with $a_{0}=a,a_{1}=b$.
Let $M$ be a model of size $\left|T\right|$. By saturation we can
extend the sequence $I$ to length $\left(2^{\left|T\right|}\right)^{+}$.
So there must be two elements in $I$ that have the same type over
$M$. By indiscernibility and homogeneity, there is some model $M'$
such that $a\equiv_{M'}b$.

The remark now follows from Remark \ref{rem:distance 2}. 
\end{proof}
We also recall the notion of KP strong type:
\begin{defn}
Let $\mathordi{\equiv_{KP}^{\alpha}}$ denote the finest bounded type-definable
equivalence relation on $\C^{\alpha}$. The \emph{KP strong type}%
\footnote{\emph{KP stands for Kim-Pillay. This notation was introduced by Hrushovski
in \cite{Hrushovski98simplicityand}.}%
}\emph{ }of a tuple $a\in\C^{\alpha}$ is its $\mathordi{\equiv_{KP}^{\alpha}}$-class. \end{defn}
\begin{fact}
\label{fac:KP is still finest} \cite[Proposition 15.25]{cassanovas-simple}
Let $X$ be any type-definable subset of $\C^{\alpha}$. 
\begin{enumerate}
\item The restriction $\mathordi{\equiv_{_{L}}^{\alpha}}\upharpoonright X$
of $\mathordi{\equiv_{_{L}}^{\alpha}}$ to $X$ is the finest bounded
invariant equivalence relation on realizations of $X$. 
\item The restriction $\mathordi{\equiv_{_{KP}}^{\alpha}}\upharpoonright X$
of $\mathordi{\equiv_{_{KP}}^{\alpha}}$ is the finest bounded type-definable
equivalence relation on realizations of $p$.
\end{enumerate}
\end{fact}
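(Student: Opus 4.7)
Both parts of the Fact share the same template: given a bounded equivalence relation $E$ on realizations of $X$ of the appropriate kind (invariant for (1); type-definable for (2)), the idea is to extend $E$ to a bounded equivalence relation $\tilde{E}$ on all of $\C^{\alpha}$ of the same kind, invoke the prior characterization of $\equiv_{L}^{\alpha}$ (respectively $\equiv_{KP}^{\alpha}$) as the finest such equivalence relation on $\C^{\alpha}$, and restrict back to $X$. The opposite containment --- that $\equiv_{L}^{\alpha}\upharpoonright X$ (resp.\ $\equiv_{KP}^{\alpha}\upharpoonright X$) is itself a bounded invariant (resp.\ type-definable) equivalence relation on $X$ --- is immediate from the definitions.

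For (1), I would take the extension $\tilde{E}$ defined by $a \mathrel{\tilde{E}} b$ iff either both $a, b$ lie in $X$ and $a E b$, or both lie outside $X$, so that $\C^{\alpha}\setminus X$ becomes a single lumped class. Invariance of $\tilde{E}$ follows from the invariance of $X$ (as $X$ is $\emptyset$-type-definable) together with the invariance of $E$, and $\tilde{E}$ has $|X/E|+1$ classes, hence is bounded. Applying the finest-bounded-invariant property of $\equiv_{L}^{\alpha}$ on $\C^{\alpha}$ (established just before the Fact) and restricting to $X$ yields $\equiv_{L}^{\alpha}\upharpoonright X \subseteq E$.

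For (2), the naive extension from (1) is not type-definable, since the class $\C^{\alpha}\setminus X$ is only pseudo-open. Instead I would glue $E$ to the bounded type-definable relation of equality of complete types over $\emptyset$ (call it $\equiv_{\emptyset}$), by setting $a \mathrel{\tilde{E}} b$ iff $\tp(a/\emptyset)=\tp(b/\emptyset)$ and, whenever $a\in X$, $aEb$. The classes of $\tilde{E}$ are the intersections of $E$-classes with $\equiv_{\emptyset}$-classes inside $X$, together with one class $\{c:\tp(c)=q\}$ for each complete type $q\in S_{\alpha}(\emptyset)$ not extending the partial type $p$ defining $X$; there are therefore boundedly many. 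The main obstacle I anticipate is verifying that $\tilde{E}$ is type-definable as a subset of $\C^{\alpha}\times\C^{\alpha}$: at face value the defining condition involves an implication ``$a\in X \Rightarrow aEb$'' which is only pseudo-$G_{\delta}$, and the trick is to merge the partial type $\pi_{E}$ defining $E$ (which already contains the partial type $p$ defining $X$) with the partial type $\{\varphi(x)\leftrightarrow\varphi(y):\varphi\in L_{\alpha}(\emptyset)\}$ defining $\equiv_{\emptyset}$ into a single partial type whose realization set is exactly $\tilde{E}$. Once this is accomplished, the universality of $\equiv_{KP}^{\alpha}$ gives $\equiv_{KP}^{\alpha}\subseteq\tilde{E}$ and restricting to $X$ yields $\equiv_{KP}^{\alpha}\upharpoonright X\subseteq E$, completing the proof.
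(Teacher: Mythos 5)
The paper offers no proof of this Fact---it is imported verbatim from Casanovas's book---so there is nothing internal to compare against; I will judge your argument on its own terms. Part (1) is correct and standard: lumping $\C^{\alpha}\setminus X$ into a single class preserves invariance (as $X$ is $\emptyset$-invariant) and boundedness, and the universal property of $\mathord{\equiv_{L}^{\alpha}}$ on all of $\C^{\alpha}$, established in the Fact immediately preceding this one, finishes the job.

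Part (2), however, has a genuine gap precisely at the step you flag as ``the main obstacle.'' The relation $\tilde{E}$ you define is in general \emph{not} type-definable, and no merging of partial types can repair this, because the set itself fails to be pseudo-closed. Note first that the realization set of $\pi_{E}\cup\{\varphi(x)\leftrightarrow\varphi(y):\varphi\in L_{\alpha}(\emptyset)\}$ is $E\cap\mathord{\equiv_{\emptyset}}$, which is contained in $X\times X$ (since $\pi_{E}\supseteq p(x)\cup p(y)$) and so omits every pair outside $X$ that $\tilde{E}$ must contain. Worse, $\tilde{E}$ can fail to be type-definable by \emph{any} partial type: take $L=\{P_{n}:n<\omega\}\cup\{R\}$ with the $P_{n}$ independent unary predicates and $R$ a definable equivalence relation with two classes, each meeting every finite Boolean combination of the $P_{n}$ infinitely; let $X=\bigcap_{n}[P_{n}]$ and $E=R\upharpoonright X$. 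If $\tilde{E}$ were the realization set of a partial type $\pi$, then inconsistency of $\pi(x,y)\cup\{P_{n}(x)\wedge P_{n}(y):n<\omega\}\cup\{\neg R(x,y)\}$ plus compactness would yield $\varphi\in\pi$ and $N<\omega$ with $\varphi(x,y)\wedge\bigwedge_{n<N}(P_{n}(x)\wedge P_{n}(y))\vdash R(x,y)$; but a pair $a\equiv_{\emptyset}b$ satisfying $P_{n}$ for $n<N$, omitting $P_{N}$, and lying in distinct $R$-classes belongs to $\tilde{E}$ and contradicts this. The deeper point is that extending a bounded type-definable equivalence relation from $X$ to all of $\C^{\alpha}$ is essentially equivalent to the assertion being proved, so it cannot be obtained by naive gluing. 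The standard proofs of (2) proceed differently: either one regards the $E$-classes as bounded hyperimaginaries, hence fixed by $\Aut(\C/\operatorname{bdd}(\emptyset))$, and uses the characterization of $\mathord{\equiv_{KP}^{\alpha}}$ via types over $\operatorname{bdd}(\emptyset)$; or one uses that $X/E$ with the logic topology is compact Hausdorff and that $X\to X/\mathord{\equiv_{KP}^{\alpha}}$ is universal among such quotients. You need one of these (or an equivalent) ingredient; the reduction-to-the-global-case template that works for (1) does not carry over.
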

\begin{rem}
\label{rem:Lascar is not closed} By saturation and homogeneity if
$X\subseteq\C^{\alpha}$ is type-definable over some small set $B$
and invariant over another small set $A$, then it is type-definable
over $A$. It follows that if $K\subseteq\C^{\alpha}$ is a KP strong
type, and for some $a\in K$, $\left[a\right]_{\mathordi{\equiv_{L}^{\alpha}}}$
is pseudo closed, then $\mathordi{\equiv_{L}^{\alpha}}\upharpoonright K$
is trivial. Indeed, it is type-definable over $a$ so there is a type
$\pi\left(x,y\right)$ such that $\pi\left(x,a\right)$ defines $\left[a\right]_{\mathordi{\equiv_{L}}}$.
Let $p\left(x\right)=\tp\left(a/\emptyset\right)$. Then $\mathordi{\equiv_{L}^{\alpha}}\upharpoonright p$
is defined by: $x\equiv_{L}^{\alpha}y$ iff $\pi\left(x,y\right)$.
Fact \ref{fac:KP is still finest} implies that $\mathordi{\equiv_{L}^{\alpha}}\upharpoonright p=\mathordi{\equiv_{KP}^{\alpha}}\upharpoonright p$,
so $\mathordi{\equiv_{L}^{\alpha}}\upharpoonright K$ is trivial. \end{rem}
\begin{defn}
Let $Y\subseteq\C^{\alpha}$ be closed under $\mathordi{\equiv_{L}^{\alpha}}$.
We say that $Y$ is \emph{$d$-bounded} if there is some $n<\omega$
such that $a\equiv_{L}^{\alpha}b$ iff $d\left(a,b\right)\leq n$
for all $a,b\in Y$. \end{defn}
\begin{rem}
For a set of parameters $A$, \emph{Lascar distance over $A$}, \emph{Lascar
strong type over $A$}, \emph{KP-strong type over $A$}, etc., are
the parallel notions for $T_{A}$: the complete theory of the structure
$\C_{A}$ which is just $\C$ after naming all elements from $A$.
All the facts above hold for $A$ with the obvious adjustments. 
\end{rem}

\subsection{\label{sub:Preliminaries-on-definable}Preliminaries on Borel equivalence
relations.}

Here we give the basic facts about Borel equivalence relations. 
\begin{defn}
Suppose $X$ and $Y$ are Polish spaces, and $E$ and $F$ are Borel
equivalence relations on $X$ and $Y$. We say that a function $f:X\to Y$
is a \emph{reduction} of $E$ to $F$ if for all $x_{0},x_{1}\in X$,
$\left(x_{0},x_{1}\right)\in E$ iff $\left(f\left(x_{0}\right),f\left(x_{1}\right)\right)\in F$.
\begin{enumerate}
\item We say that $E$ is\emph{ Borel reducible} to $F$, denoted by $E\leq_{B}F$,
when there is a Borel reduction $f:X\to Y$ of $E$ to $F$.
\item We say that $E$ is \emph{continuously reducible} to $F$, denoted
by $E\sqsubseteq_{c}F$, when there is a continuous injective reduction
$f:X\to Y$ of $E$ to $F$.
\item We say that $E$ and $F$ are \emph{Borel bi-reducible}, denoted by
$E\sim_{B}F$, when $E\leq_{B}F$ and $F\leq_{B}E$. 
\item We write $E<_{B}F$ to mean that $E\leq_{B}F$ but $E\not\sim_{B}F$. 
\end{enumerate}
\end{defn}
\begin{example}
For a Polish space $X$, the relations $\Delta\left(X\right)$ denotes
equality on $X$. Then $\Delta\left(1\right)<_{B}\Delta\left(2\right)<_{B}\ldots<_{B}\Delta\left(\omega\right)<_{B}\Delta\left(2^{\omega}\right)$. \end{example}
\begin{defn}
We say that $E$ is \emph{smooth} iff $E\leq_{B}\Delta\left(2^{\omega}\right)$. 
\end{defn}
Note that being smooth is equivalent to the existence of ``separating
Borel sets,'' i.e., Borel sets $B_{i}\subseteq X$ such that $x\mathrela Ey$
iff for all $i<\omega$, $x\in B_{i}$ iff $y\in B_{i}$. 
\begin{fact}
\cite{silver}(Silver dichotomy) For all Borel equivalence relations
$E$, $E\leq_{B}\Delta\left(\omega\right)$ or $\Delta\left(2^{\omega}\right)\sqsubseteq_{c}E$
. It follows that $\Delta\left(2^{\omega}\right)$ is the successor
of $\Delta\left(\omega\right)$.\end{fact}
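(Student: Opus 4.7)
My plan is to prove Silver's dichotomy via the effective descriptive set theory route using the Gandy-Harrington topology (Silver's original argument used forcing with perfect trees; the approach below, due essentially to Harrington, is cleaner). Let $E$ be a Borel equivalence relation on a Polish space $X$. By standard reductions I may take $X = \omega^\omega$, and after relativizing to a real parameter coding $E$, may assume that $E$ is lightface $\Pi^1_1$. I then work with the \emph{Gandy-Harrington topology} $\tau_{GH}$ on $\omega^\omega$, whose basic open sets are the nonempty lightface $\Sigma^1_1$ sets; this topology refines the usual topology, is strong Choquet (though not Polish), and every nonempty $\Sigma^1_1$ set is $\tau_{GH}$-open.

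The technical core is the following splitting lemma: whenever $A \subseteq \omega^\omega$ is a nonempty $\Sigma^1_1$ set meeting uncountably many $E$-classes, there exist disjoint nonempty $\Sigma^1_1$ subsets $A_0, A_1 \subseteq A$, each meeting uncountably many $E$-classes, such that $(A_0 \times A_1) \cap E = \emptyset$. I would prove this by $\Pi^1_1$-reflection: the set of $x \in A$ whose $E$-class $[x]_E \cap A$ is contained in some $\Sigma^1_1$ subset of $A$ meeting only countably many $E$-classes is itself $\Pi^1_1$; were it to exhaust $A$, a covering/boundedness argument would put $A$ into countably many classes, contradicting the hypothesis; hence some $x \in A$ fails this condition, and reflection carves out the required $E$-separated pair $A_0, A_1$. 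Iterating along $2^{<\omega}$, I build a Cantor scheme $(A_s)_{s \in 2^{<\omega}}$ of nonempty $\Sigma^1_1$ sets with $A_{s\concat 0}, A_{s\concat 1} \subseteq A_s$ disjoint and $E$-separated, with usual-topology diameters tending to $0$, chosen so that the strong Choquet property of $\tau_{GH}$ forces $\bigcap_n A_{x \upharpoonright n} \neq \emptyset$ for every $x \in 2^\omega$. The resulting map $f : 2^\omega \to \omega^\omega$ sending $x$ to the unique element of $\bigcap_n A_{x \upharpoonright n}$ is continuous, injective, and reduces $\Delta(2^\omega)$ to $E$, establishing $\Delta(2^\omega) \sqsubseteq_c E$.

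For the dichotomy proper, let $U$ be the union of all $\Sigma^1_1$ sets meeting only countably many $E$-classes. Since there are only countably many $\Sigma^1_1$ codes, $U$ itself meets only countably many $E$-classes. If $U = \omega^\omega$, then $E$ has countably many classes and $E \leq_B \Delta(\omega)$. Otherwise $E$ has uncountably many classes, so $\omega^\omega$ itself satisfies the hypothesis of the splitting lemma, and the Cantor-scheme construction above yields $\Delta(2^\omega) \sqsubseteq_c E$. The main obstacle is the splitting lemma, where essentially all the work is concentrated. The ``successor'' statement follows immediately: $\Delta(\omega) <_B \Delta(2^\omega)$ by cardinality of quotients, and any Borel $E$ with $\Delta(\omega) <_B E$ must, by the dichotomy, satisfy $\Delta(2^\omega) \leq_B E$, so no Borel equivalence relation lies strictly between $\Delta(\omega)$ and $\Delta(2^\omega)$.
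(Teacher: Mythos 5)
The paper does not prove this statement: it is imported as a black box from Silver's article, so there is no in-house argument to compare yours against. On its own terms, your outline is the standard Harrington-style proof of Silver's dichotomy, and its architecture is sound; it is worth observing that it is structurally the same construction the paper performs in Section \ref{sec:Descriptive-set-theoretic} --- a Cantor scheme of pairwise separated sets whose branches have nonempty intersection thanks to a (strong) Choquet property, exactly as in Theorems \ref{theorem:meagerembedding} and \ref{theorem:meagerembedding-Choquet}, with nonempty $\Sigma^1_1$ sets playing the role of basic open sets and ``meets uncountably many classes'' playing the role that weak genericity and properness play in Sections \ref{sec:The-general-case} and \ref{sec:Uncountable-language}. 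Two steps are deferred rather than proved, and they are where all the content sits. First, the splitting lemma is correctly isolated as the crux, but the reflection argument you gesture at does not yet parse: the set of $x\in A$ whose class $[x]_E\cap A$ is covered by a $\Sigma^1_1$ set meeting only countably many classes involves an existential quantifier over $\Sigma^1_1$ codes constrained by a containment clause and a ``countably many classes'' clause, neither of which is $\Pi^1_1$ on its face; converting this into a legitimate $\Pi^1_1$ statement (via $\Sigma^1_1$-boundedness and the first reflection theorem) is the actual work, as is the step where two inequivalent points of $A$ meeting the ``bad'' part must be separated by a $\Sigma^1_1$ \emph{rectangle} disjoint from $E$ --- the complement of $E$ is a priori only open in the Gandy--Harrington topology of the plane, not a union of $\Sigma^1_1$ rectangles, and bridging that gap again requires reflection. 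Second, invoking the strong Choquet property to get $\bigcap_n A_{x\restriction n}\neq\emptyset$ for \emph{every} $x\in2^{\omega}$ means running continuum many games against a single strategy; one must either use the concrete fusion criterion for decreasing sequences of nonempty $\Sigma^1_1$ sets or thread the strategy through the scheme so that each branch is a legal play (compare how Theorem \ref{theorem:meagerembedding-Choquet} interleaves the strategy into condition (2) of the construction). Neither issue is a wrong turn --- both are standard and repairable --- but they are precisely the points your proposal would have to fill in; the remainder (countably many codes, hence $U$ meets countably many classes; the successor statement from the dichotomy) is correct and routine.
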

\begin{prop}
\label{prop:Closed-equivalence-relations}Closed equivalence relations
are smooth. \end{prop}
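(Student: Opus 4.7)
The plan is to derive smoothness from the Harrington-Kechris-Louveau dichotomy mentioned in the introduction, which asserts that any Borel equivalence relation on a Polish space is either smooth or admits a continuous embedding of $\EZero$. Since $E$ is in particular Borel, the dichotomy applies, and it will suffice to rule out the second alternative.

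I would argue by contradiction: assume that there is a continuous injection $f\colon 2^{\omega}\to X$ witnessing $\EZero\sqsubseteq_{c}E$, i.e., satisfying $x\mathbin{\EZero}y$ iff $f(x)\mathbin{E}f(y)$ for all $x,y\in 2^{\omega}$. Rewritten, this is the identity $\EZero=(f\times f)^{-1}(E)$. Since $f\times f$ is continuous and $E$ is closed in $X\times X$, the preimage $\EZero$ would have to be closed in $2^{\omega}\times 2^{\omega}$. But $\EZero$ is easily seen not to be closed: the pairs $(x_{n},y_{n}):=(1^{n}0^{\omega},\,0^{\omega})$ lie in $\EZero$ since $x_{n}$ and $y_{n}$ agree from coordinate $n$ onward, yet they converge to $(1^{\omega},0^{\omega})$, which is not in $\EZero$ as $1^{\omega}$ and $0^{\omega}$ disagree at every coordinate. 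This contradiction forces the first alternative of the dichotomy, giving $E\leq_{B}\Delta(2^{\omega})$, as desired.

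The only substantive ingredient is the HKL dichotomy itself; closedness of $E$ enters only through pullback preservation plus the one-line sequence calculation. Were HKL not yet available at this stage of the paper, the fallback would be to build a Borel reduction directly from a countable basis $\{U_{n}\}$ of $X$, using the separating sets $B_{n}=\{x:[x]_{E}\cap U_{n}\neq\emptyset\}$ (their separating property follows because a closed subset of a regular second-countable space is determined by the basic opens it meets); the main technical difficulty would then shift to establishing Borelness of these projections of closed-intersect-open sets, which is routine in the locally compact case but subtler in a general Polish space and may require a compactification argument.
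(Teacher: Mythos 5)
Your argument is correct, but it is not the route the paper takes. You invoke the Harrington--Kechris--Louveau dichotomy: since a closed relation is Borel, either $E$ is smooth or there is a continuous injective reduction $f\colon 2^{\omega}\to X$ of $\EZero$ to $E$; in the latter case $\EZero=(f\times f)^{-1}(E)$ would be closed, which it is not (your sequence $(1^{n}0^{\omega},0^{\omega})\to(1^{\omega},0^{\omega})$ settles this). That reasoning is sound. The paper instead gives a direct, self-contained construction of separating Borel sets. Writing $X^{2}\setminus E=\bigcup_{i}U_{i}\times V_{i}$ with $U_{i},V_{i}$ open, transitivity of $E$ shows the $E$-saturations $U_{i}^{E}$ and $V_{i}^{E}$ are disjoint analytic sets; Lusin's separation theorem then yields a Borel set between them, and iterating countably often produces an $E$-invariant Borel set $B_{i}$ containing $U_{i}^{E}$ and disjoint from $V_{i}^{E}$. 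The $B_{i}$ separate $E$-classes. This is exactly the technical obstacle you flagged in your fallback sketch (Borelness of $\{x:[x]_{E}\cap U_{n}\neq\emptyset\}$), and the paper resolves it via Lusin separation rather than compactification. The trade-off: your HKL route is shorter, but it appeals to a considerably deeper theorem than the proposition itself --- one which the paper only states afterwards --- while the paper's argument uses nothing beyond classical descriptive set theory of analytic sets and so sits naturally earlier in the development.
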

\begin{proof}
Suppose $E$ is a closed equivalence relation on a Polish space $X$.
We must find Borel set $B_{i}\subseteq X$ for $i<\omega$ such that
$xEy$ iff for all $i<\omega$, $x\in B_{i}\Leftrightarrow y\in B_{i}$.
Since $X^{2}\backslash E$ is open, it equals $\bigcup_{i<\omega}U_{i}\x V_{i}$
for $U_{i},V_{i}\subseteq X$ open. Let $U_{i}^{E}=\left\{ x\in X\left|\,\exists y\left(y\in U_{i}\,\&\, xEy\right)\right.\right\} $
be the $E$-closure of $U_{i}$ and $V_{i}^{E}$ be the $E$-closure
of $V_{i}$. These are analytic sets. Since $U_{i}^{E}\cap V_{i}^{E}=\emptyset$,
by Lusin's separation theorem, there are Borel sets $U_{i}^{0}$ such
that $U_{i}^{0}\supseteq U_{i}^{E}$, $U_{i}^{0}\cap V_{i}^{E}=\emptyset$.
Recursively we construct Borel sets $U_{i}^{j}$ for $j<\omega$ such
that $U_{i}^{j}$ contains the $E$-closure of $U_{i}^{j-1}$ and
is disjoint from $V_{i}^{E}$. Let $B_{i}=\bigcup_{j<\omega}U_{i}^{j}$. \end{proof}
\begin{example}
Let $\EZero$ be the following equivalence relation on the Cantor
space $2^{\omega}$: $\left(\eta,\nu\right)\in\EZero$ iff there exists
some $n<\omega$ such that for all $m>n$, $\eta\left(m\right)=\nu\left(m\right)$. \end{example}
\begin{prop}
\label{prop:E_0 is not definable smooth}The relation $\EZero$ is
non-smooth.\end{prop}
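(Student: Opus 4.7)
The plan is to prove non-smoothness of $\EZero$ via a Baire category / generic ergodicity argument. The first step is to recognize $\EZero$ as the orbit equivalence relation of the natural action of the countable group $G = \bigoplus_{\omega} \mathbb{Z}/2\mathbb{Z}$ on $2^\omega$ by coordinate-wise XOR with finitely-supported sequences. Each element of $G$ acts as a homeomorphism of $2^\omega$, each $\EZero$-class is an orbit (hence countable), and every orbit is dense, since for any $x \in 2^\omega$ and any basic open set $[s]$ with $s \in 2^{<\omega}$ of length $n$, the point $y$ defined by $y \upharpoonright n = s$ and $y(m) = x(m)$ for $m \geq n$ lies in $[s]$ and is $\EZero$-equivalent to $x$.

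Next I would establish generic ergodicity: every $\EZero$-invariant set $A \subseteq 2^\omega$ having the Baire property is meager or comeager. Suppose $A$ is $\EZero$-invariant, has the Baire property, and is non-meager. Then $A$ is comeager in some non-empty open set $U$, so $U \setminus A$ is meager. Because $G$ acts by homeomorphisms and preserves $A$, for every $g \in G$ the set $g \cdot U \setminus A = g \cdot (U \setminus A)$ is meager as well. Since every orbit meets $U$, we have $\bigcup_{g \in G} g \cdot U = 2^\omega$, and therefore
\[
2^\omega \setminus A = \bigcup_{g \in G} \bigl( g \cdot U \setminus A \bigr)
\]
is a countable union of meager sets, hence meager. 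So $A$ is comeager.

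Now suppose for contradiction that $\EZero$ is smooth. By the remark immediately following the definition of smoothness, there exist Borel sets $B_n \subseteq 2^\omega$ for $n<\omega$ such that $(x,y) \in \EZero$ iff for all $n$, $x \in B_n \Leftrightarrow y \in B_n$. Each $B_n$ is automatically $\EZero$-invariant by this equivalence, and being Borel it has the Baire property. By the previous step, each $B_n$ is either meager or comeager. Define $\varepsilon_n \in \{0,1\}$ by $\varepsilon_n = 1$ if $B_n$ is comeager and $\varepsilon_n = 0$ otherwise, and set
\[
C \;=\; \bigcap_{n<\omega} B_n^{(\varepsilon_n)}, \quad \text{where } B_n^{(1)} = B_n \text{ and } B_n^{(0)} = 2^\omega \setminus B_n.
\]
Then $C$ is a countable intersection of comeager sets, hence comeager in the Polish space $2^\omega$, and in particular uncountable.

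To finish, observe that any two elements of $C$ agree on membership in every $B_n$ by construction, hence are $\EZero$-equivalent by the separating property; so $C$ is contained in a single $\EZero$-class. But every $\EZero$-class is countable, contradicting the uncountability of $C$. The only non-bookkeeping step is the generic ergodicity argument of the second paragraph; the essential ingredients there --- countability of $G$, action by homeomorphisms, and density of every orbit --- are all immediate from the explicit description of the group action, so there is no real obstacle.
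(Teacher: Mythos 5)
Your proof is correct and follows essentially the same route as the paper's: both establish a Baire-category zero--one law for $\EZero$-invariant Borel sets (the paper via the homeomorphisms swapping basic clopen sets while fixing $\EZero$-classes, you via the equivalent $\bigoplus_{\omega}\mathbb{Z}/2\mathbb{Z}$-action), and both then intersect the separating sets or their complements to obtain a comeager set contained in a single countable $\EZero$-class. The "generic ergodicity" packaging is only a cosmetic difference.
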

\begin{proof}
Recall that all Borel subsets $B$ of a Polish space $X$ have the
Baire property: there is an open set $O\subseteq X$ such that $O\Delta B$
is meager. Suppose $\set{B_{i}}{i\in\omega}$ are Borel separating
sets of $\EZero$, so all of them have the Baire property. 

Fix some $i<\omega$, and suppose $B_{i}$ is not meager. Then there
is some $n<\omega$ and some $s\in2^{n}$ such that, letting $O_{s}=\set{\eta\in2^{\omega}}{s\triangleleft\eta}$,
$O_{s}\backslash B_{i}$ is meager. Let $t\in2^{n}$. Since $B_{i}$
is closed under $\EZero$, and there is a homeomorphism of $2^{\omega}$
taking $O_{s}$ to $O_{t}$ fixing all $\EZero$-classes, $O_{t}\backslash B_{i}$
is also meager. But then $2^{\omega}\backslash B_{i}=\bigcup_{s\in2^{n}}O_{s}\backslash B_{i}$
is meager, so $B_{i}$ is comeager. This shows that $B_{i}$ is either
meager or comeager.

But then, 
\[
B=\bigcap\set{B_{i}}{i<\omega,\, B_{i}\mbox{ is comeager}}\cap\bigcap\set{\twiddle B_{i}}{i<\omega,\, B_{i}\mbox{ is meager}}
\]
 is a comeager $\EZero$-class, which is a contradiction (since it
is countable). 
\end{proof}
In addition, we have the following dichotomy:
\begin{fact}
\label{fac:Harrington-Kechris-Louveau-dich} \cite{harringtonKechrisLouveau}
(Harrington-Kechris-Louveau dichotomy) For every Borel equivalence
relation $E$ either $E\leq_{B}\Delta\left(2^{\omega}\right)$ (i.e.,
$E$ is smooth) or $\EZero\sqsubseteq_{c}E$. It follows that $\EZero$
is the successor of $\Delta\left(2^{\omega}\right)$. 
\end{fact}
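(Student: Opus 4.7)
The plan is to use effective descriptive set theory. After relativising to a real parameter, I may assume $E \subseteq X \x X$ is $\Delta^{1}_{1}$ on a recursively presented Polish space $X$. The key tool is the Gandy--Harrington topology $\tau$ on $X$, generated by the non-empty $\Sigma^{1}_{1}$ subsets of $X$; although not metrisable, $(X, \tau)$ is a strong Choquet space whose restriction to the set of $\Sigma^{1}_{1}$-generic reals is Polish and refines the original topology.

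First I would set up a smooth-vs-non-smooth dichotomy. Define an auxiliary equivalence relation $R$ on $X$ by $x \mathrela R y$ iff $x$ and $y$ lie in exactly the same $E$-invariant $\Sigma^{1}_{1}$ sets; clearly $E \subseteq R$. If $E = R$, enumerating the $E$-invariant $\Sigma^{1}_{1}$ sets as $\sequence{B_{n}}{n < \omega}$ shows that $x \mapsto \sequence{\mathbf{1}_{B_{n}}(x)}{n < \omega}$ is a Borel reduction of $E$ to $\Delta(2^{\omega})$, so $E$ is smooth and we are done.

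Assume instead $E \subsetneq R$, and fix $(x_{0}, y_{0}) \in R \setminus E$. The plan is now to build a continuous embedding of $\EZero$ into $E$ by a Cantor-scheme recursion performed in $(X, \tau)$. For each $s \in 2^{<\omega}$, I would produce a non-empty $\Sigma^{1}_{1}$ set $U_{s}$ together with a $\Delta^{1}_{1}$ partial homeomorphism $g_{s}$ whose graph is contained in $E$, arranged so that $U_{s \concat 0}$ and $U_{s \concat 1}$ are disjoint subsets of $U_{s}$ with standard-topology diameters tending to zero, and $g_{s}$ pairs $U_{s \concat 0}$ with $U_{s \concat 1}$. The limit map $\varphi \colon 2^{\omega} \to X$ defined by $\{\varphi(\eta)\} = \bigcap_{n < \omega} U_{\eta \upharpoonright n}$ is continuous in the original topology; composing the $g_{s}$'s along cofinally-agreeing branches witnesses $\eta \mathrela{\EZero} \nu \Longrightarrow \varphi(\eta) \mathrela{E} \varphi(\nu)$, while the availability of $R \setminus E$-pairs inside every $\Sigma^{1}_{1}$ piece reached in the recursion, obtained by reflection from the assumption $E \subsetneq R$, yields the converse.

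The main obstacle is precisely this recursion, where two competing requirements collide. Eventually-equal branches must yield $E$-equivalent limits, which forces the splitting to be implemented by partial homeomorphisms staying inside $E$; branches differing at infinitely many coordinates must yield $E$-inequivalent limits, which forces each split to escape the current $E$-class in the Gandy--Harrington topology. The strong Choquet property of $(X, \tau)$ supplies non-emptiness of the nested intersections throughout. The ``successor of $\Delta(2^{\omega})$'' clause then follows from Proposition \ref{prop:E_0 is not definable smooth}: since $\EZero$ is itself non-smooth, no Borel equivalence relation can sit strictly between $\Delta(2^{\omega})$ and $\EZero$ in the $\leq_{B}$-order.
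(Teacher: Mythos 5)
The paper does not prove this statement; it is quoted from \cite{harringtonKechrisLouveau} as a known result, so there is no internal proof to compare against. Judged on its own terms, your outline correctly names the standard Gandy--Harrington strategy, but it has gaps at both ends. In the smooth case, the map $x\mapsto\sequence{\mathbf{1}_{B_{n}}\left(x\right)}{n<\omega}$ is not Borel when the $B_{n}$ range over the $E$-invariant $\Sigma^{1}_{1}$ sets, since characteristic functions of $\Sigma^{1}_{1}$ sets need not be Borel; one must first invoke the reflection theorem to replace each invariant $\Sigma^{1}_{1}$ set by an invariant $\Delta^{1}_{1}$ set (checking that the relation $R$ is unchanged) before the reduction to $\Delta\left(2^{\omega}\right)$ makes sense.

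More seriously, the non-smooth case is not actually proved: you explicitly defer ``the main obstacle'' --- the recursion --- and the mechanism you propose for it is not available in the required generality. For an arbitrary Borel $E$ there is no reason to have $\Delta^{1}_{1}$ partial homeomorphisms $g_{s}$ with graph contained in $E$ pairing $U_{s\concat0}$ with $U_{s\concat1}$; decomposing $E$-relatedness into countably many partial Borel injections is a Lusin--Novikov/Feldman--Moore phenomenon confined to equivalence relations with countable classes. The actual Harrington--Kechris--Louveau construction instead carries along, at stage $n$, a nonempty $\Sigma^{1}_{1}$ subset of the power $X^{2^{n}}$ whose coordinates are pairwise $E$-related in the pattern dictated by the finite approximations to $\EZero$, together with a separation/reflection lemma guaranteeing that every $\Sigma^{1}_{1}$ piece reached still meets the set of points whose $R$-class properly contains their $E$-class; that lemma is precisely what makes infinitely-often-differing branches land in distinct $E$-classes, and ``obtained by reflection'' is a placeholder for it rather than an argument. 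Since essentially all of the content of the theorem lives in that recursion, the proposal as written is a roadmap, not a proof. (The final deduction of the successor property from the dichotomy together with Proposition \ref{prop:E_0 is not definable smooth} is fine.)
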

We also mention:
\begin{cor}
\label{cor:G_delta=00003D>smooth}Suppose $Y$ is a Polish space,
and $E$ is a Borel equivalence relation on $Y$ such that all its
classes are $G_{\delta}$-subsets. Then $E$ is smooth.\end{cor}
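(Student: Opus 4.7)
The plan is to invoke the Harrington--Kechris--Louveau dichotomy (Fact~\ref{fac:Harrington-Kechris-Louveau-dich}) and derive a contradiction from the alternative $\EZero\sqsubseteq_{c}E$ by pulling back a single $E$-class to obtain a countable dense $G_\delta$ in $2^\omega$, which Baire category rules out.

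First, by Fact~\ref{fac:Harrington-Kechris-Louveau-dich}, either $E$ is smooth (in which case we are done) or there is a continuous injection $f\colon 2^\omega\to Y$ which reduces $\EZero$ to $E$. Assume the latter. Fix any $\eta\in 2^\omega$. Since $f$ reduces $\EZero$ to $E$, we have the set equality
\[
f^{-1}\bigl([f(\eta)]_E\bigr)=[\eta]_{\EZero}.
\]
By hypothesis $[f(\eta)]_E$ is a $G_\delta$ subset of $Y$, and $f$ is continuous, so $[\eta]_{\EZero}$ is a $G_\delta$ subset of $2^\omega$.

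Now I would derive a contradiction by essentially the same Baire category argument used in Proposition~\ref{prop:E_0 is not definable smooth}. The class $[\eta]_{\EZero}$ is countable (there are only countably many sequences eventually equal to $\eta$) and dense in $2^\omega$ (given any basic clopen $O_s$ with $s\in 2^{<\omega}$, one can modify $\eta$ on the initial segment of length $|s|$ to land in $O_s$ while preserving $\EZero$-equivalence). Write $[\eta]_{\EZero}=\bigcap_{n<\omega}U_n$ with each $U_n$ open and dense, and enumerate $[\eta]_{\EZero}=\{c_n:n<\omega\}$. Each $V_n:=U_n\setminus\{c_n\}$ is open and dense in $2^\omega$ (since $2^\omega$ has no isolated points), so by Baire $\bigcap_n V_n$ is non-empty; but $\bigcap_n V_n\subseteq[\eta]_{\EZero}\setminus\{c_n:n<\omega\}=\emptyset$, contradiction.

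The main work is really only in the final paragraph, and it is routine Baire category (parallel to the proof of Proposition~\ref{prop:E_0 is not definable smooth} already in the paper), so there is no genuine obstacle; the key conceptual point is simply to observe that a continuous reduction pulls $G_\delta$ classes back to $G_\delta$ classes, which then forces $\EZero$ to have $G_\delta$ classes — and this is exactly what the Baire argument forbids.
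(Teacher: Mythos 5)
Your proof is correct and follows essentially the same route as the paper's: apply the Harrington--Kechris--Louveau dichotomy, observe that a continuous reduction pulls the $G_\delta$ classes of $E$ back to the $\EZero$-classes, and derive a contradiction from these being countable and dense. The only difference is that you spell out the final Baire category step that the paper leaves implicit in the phrase ``as they are also dense, this is a contradiction,'' and your elaboration is accurate.
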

\begin{proof}
Suppose $E$ is not smooth. By Fact \ref{fac:Harrington-Kechris-Louveau-dich},
there is a continuous map $f:2^{\omega}\to Y$ that reduces $\EZero$
to $E$. But then it follows that all the $\EZero$-classes are continuous
pre-images of $G_{\delta}$ sets, so they are themselves $G_{\delta}$.
As they are also dense, this is a contradiction. 
\end{proof}

\subsection{\label{sub:Preliminaries-on-Choquet}Preliminaries on Choquet spaces.}

As we mentioned above, when the language is not necessarily countable
we will work with Choquet spaces instead of Polish spaces.
\begin{defn}
The \emph{Choquet game} on a topological space $X$ is a two player
game in $\omega$-many rounds. In round $n$, player A chooses a non-empty
open set $U_{n}\subseteq V_{n-1}$ (where $V_{-1}=X$), and player
B responds by choosing a non-empty open subset $V_{n}\subseteq U_{n}$.
Player B wins if the intersection $\bigcap\set{V_{n}}{n<\omega}$
is not empty. 

The \emph{strong Choquet game }is similar: in round $n$ player A
chooses an open set $U_{n}\subseteq V_{n-1}$ and $x_{n}\in U_{n}$,
and player B responds by choosing an open set $V_{n}\subseteq U_{n}$
containing $x_{n}$. Again, player B wins when the intersection $\bigcap\set{V_{n}}{n<\omega}$
is not empty. 

A topological space $X$ is a \emph{(strong) Choquet space} if player
B has a winning strategy in every (strong) Choquet game. 

Given a subset $A$ of $X$, we say that $X$ is \emph{strong Choquet
over $A$} to mean that the points that player A chooses are taken
from $A$.
\end{defn}
It is easy to see that:
\begin{example}
Every Polish space is strong Choquet. 
\end{example}
But for our purposes, we shall need the following example:
\begin{example}
If $X$ is compact (not necessarily Hausdorff) and has a basis consisting
of clopen sets then it is strong Choquet.\end{example}
\begin{proof}
In round $n$, player B will choose a clopen set $x_{n}\in V_{n}\subseteq U_{n}$.
By compactness, the intersection $\bigcap\set{V_{n}}{n<\omega}$ is
not empty. \end{proof}
\begin{prop}
\label{prop:G_delta subset is Choquet}If $X$ is strong Choquet and
$\emptyset\neq U\subseteq X$ is $G_{\delta}$, then $U$ is also
strong Choquet.\end{prop}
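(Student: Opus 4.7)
The plan is to reduce the strong Choquet game on $U$ to the strong Choquet game on $X$, where B already has a winning strategy $\tau$. Write $U = \bigcap_{n<\omega} W_n$ with each $W_n$ open in $X$, and by replacing $W_n$ with $\bigcap_{i \leq n} W_i$, arrange that the sequence $(W_n)$ is decreasing. Every open subset of $U$ has the form $O \cap U$ for some $O$ open in $X$, and every point of $U$ lies in every $W_n$. B's strategy on $U$ will lift each of A's moves to a legal move in a simulated game on $X$, apply $\tau$, and then project the result back to $U$.

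Concretely, suppose A has played $(U_n', x_n)$ in round $n$ of the game on $U$, with $U_n' \subseteq V_{n-1}'$ and $x_n \in U_n'$ (and $V_{-1}' := U$). Write $U_n' = O_n \cap U$ with $O_n$ open in $X$, and have A play $(U_n^X, x_n)$ in the simulated game on $X$, where $U_n^X := O_n \cap W_n \cap V_{n-1}^X$ (with $V_{-1}^X := X$). The three conjuncts all contain $x_n$: $O_n$ does because $x_n \in U_n' = O_n \cap U$, $W_n$ does because $x_n \in U \subseteq W_n$, and $V_{n-1}^X$ does because, inductively, $x_n \in V_{n-1}' = V_{n-1}^X \cap U$. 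Hence $(U_n^X, x_n)$ is a legal move in $X$. Let $V_n^X$ be $\tau$'s response and set $V_n' := V_n^X \cap U$. Then $x_n \in V_n'$ and $V_n' \subseteq U_n^X \cap U \subseteq O_n \cap U = U_n'$, so this is a legal move for B in $U$.

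It remains to verify that the infinite play is winning for B. Since $\tau$ wins in $X$, we have $\bigcap_{n < \omega} V_n^X \neq \emptyset$. But $V_n^X \subseteq U_n^X \subseteq W_n$ for every $n$, so $\bigcap_n V_n^X \subseteq \bigcap_n W_n = U$, and therefore $\bigcap_n V_n' = \bigcap_n V_n^X \neq \emptyset$, as required.

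The only real obstacle is the bookkeeping of keeping the simulated game on $X$ legal at each stage; the definition of $U_n^X$ as an intersection with $V_{n-1}^X$ automatically gives the inclusion $U_n^X \subseteq V_{n-1}^X$, while the property $U \subseteq W_n$ ensures that the extra intersection with $W_n$ neither loses the point $x_n$ nor destroys the required inclusion $V_n' \subseteq U_n'$. The same argument also shows that $U$ is Choquet whenever $X$ is Choquet, as the points $x_n$ play no role in that version.
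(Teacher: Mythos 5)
Your proof is correct and follows essentially the same route as the paper's: simulate a parallel game in $X$ by lifting A's move $O_n\cap U$ to $O_n\cap W_n\cap V_{n-1}^X$, let B's winning strategy in $X$ respond, and intersect the response with $U$. You are in fact slightly more careful than the paper in spelling out why the final intersection lands inside $U=\bigcap_n W_n$ and is therefore nonempty.
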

\begin{proof}
Suppose $U=\bigcap\set{W_{n}}{n<\omega}$ where $W_{n}\subseteq X$
are open. Let $St$ be a strategy for the strong Choquet game in $X$
and we will describe a strategy $St_{U}$ for the strong Choquet game
in $U$. So we play a game $\Game_{U}$ in $U$, and we run a parallel
game $\Game_{X}$ in $X$ as follows. Assume we have already played
all the rounds up to $n$: the sets $U_{i},V_{i}$ were chosen for
$i<n$ in the game $\Game_{U}$, and $U'_{i},V'_{i}$ are the corresponding
moves in the $\Game_{X}$. The construction will ensure that for all
$i<n$, we have $U'_{i}\cap U=U_{i}$, $V'_{i}\cap U=V_{i}$ and $U'_{n}\subseteq W_{n}$.
Assume that A plays $(U_{n},x_{n})$, with $x_{n}\in U_{n}$. Pick
an open subset $U_{*}$ of $X$ such that $U_{*}\cap U=U_{n}$. We
set A's move in the parallel game to be $(U_{*}\cap W_{n}\cap V'_{n-1},x_{n})$.
Let $V'_{n}$ be B's move according to the strategy $St$. Then in
$\Game_{U}$, have B play $V'_{n}\cap U$. Note that this set is non-empty
since it contains $x_{n}$. This defines a winning strategy for $B$.
\end{proof}

\subsection{\label{sub:Context}Context.}

\subsubsection{Countable language.}

In \cite{PillayKrupinskiSolecki}, the authors gave a natural way
of considering $\mathordi{\equiv_{L}^{\alpha}}$ and $\mathordi{\equiv_{KP}^{\alpha}}$
for a countable complete first order theory $T$ and a countable $\alpha$
as Borel equivalence relations on the space of types $S_{\alpha}\left(M\right)$
over a countable model $M$ (this is a Polish space --- see Remark
\ref{rem:stone-topology} about the topology). Fix some countable
$T$ and $\alpha$. 
\begin{defn}
Let $M$ be a countable model. For $p,q\in S_{\alpha}\left(M\right)$,
we write $p\equiv_{L}^{\alpha,M}q$ iff $\exists a\models p,b\models q\,\left(a\equiv_{L}^{\alpha}b\right)$
and similarly we define $\mathordi{\equiv_{KP}^{\alpha,M}}$. 
\end{defn}
It will be useful to define the Lascar metric on types:
\begin{defn}
\label{def:distance on types} For $p,q\in S_{\alpha}\left(M\right)$
let $d_{\alpha}\left(p,q\right)=\min\set{n\in\Nn}{\exists a\models p,b\models q\,\left(d_{\alpha}\left(a,b\right)\leq n\right)}$.
\end{defn}
Note that:
\begin{rem}
\label{rem:exists =00003D for all}\cite[Remark 2.2]{PillayKrupinskiSolecki}
Let $M$ be a countable model. By Remark \ref{rem:distance 2}, for
$p,q\in S_{\alpha}\left(M\right)$, $p\equiv_{L}^{\alpha,M}q$ iff
$\forall a\models p,b\models q\,\left(a\equiv_{L}^{\alpha}b\right)$
and similarly for $\mathordi{\equiv_{KP}^{\alpha,M}}$. 

Let $q_{\alpha,M}:S_{\alpha\cdot2}\left(M\right)\to S_{\alpha}\left(M\right)$
be defined by $p\left(x,y\right)\mapsto\left(p\upharpoonright x,q\upharpoonright y\right)$.
This is a continuous map, and hence it is closed. Using this notation,
$\mathordi{\equiv_{KP}^{\alpha,M}}=q_{\alpha,M}\circ r_{\alpha\cdot2,M}\left(\mathordi{\equiv_{KP}^{\alpha}}\right)$
(see Remark \ref{rem:restriction map}), and hence $\mathordi{\equiv_{KP}^{\alpha,M}}$
is closed. Similarly, the set 
\[
F_{n}=\left\{ \left(p,q\right)\in S_{\alpha}\left(M\right)\left|\, d_{\alpha}\left(p,q\right)\leq n\right.\right\} 
\]
 is closed, and $\mathordi{\equiv_{L}^{\alpha,M}}$ is the union $\bigcup_{n<\omega}F_{n}$
hence it is $K_{\sigma}$.
\end{rem}
They proved that as far as Borel cardinality goes, this does not depend
on the model $M$, even when restricting to a KP strong type:
\begin{fact}
\label{fac:Change the model}\cite[Propositions 2.3, 2.6]{PillayKrupinskiSolecki}
Let $M$ and $N$ be any countable models. Then,
\begin{enumerate}
\item $\mathordi{\equiv_{L}^{\alpha,M}}\sim_{B}\mathordi{\equiv_{L}^{\alpha,N}}$.
\item For any $a\in\C$, $\mathordi{\equiv_{L}^{\alpha,M}}\upharpoonright\left[\tp\left(a/M\right)\right]_{\mathordi{\equiv_{KP}^{\alpha,M}}}\sim_{B}\mathordi{\equiv_{L}^{\alpha,N}}\upharpoonright\left[\tp\left(a/N\right)\right]_{\mathordi{\equiv_{KP}^{\alpha,N}}}$. 
\end{enumerate}
\end{fact}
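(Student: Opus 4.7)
The strategy is to analyze the restriction map on Stone spaces. First I would reduce to the case where $M$ is an elementary substructure of $N$: given arbitrary countable models, apply downward L\"owenheim--Skolem inside $\C$ to find a common countable $K$ with $M, N \prec K$, and prove the result for each of the pairs $(M,K)$ and $(N,K)$, whence the general case follows by transitivity of $\sim_B$.

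So assume $M \prec N$ are both countable, and consider the restriction map
\[ r : S_\alpha(N) \to S_\alpha(M), \qquad \tp(a/N) \mapsto \tp(a/M). \]
This is a continuous surjection between compact Polish spaces. I claim $r$ is already a Borel reduction from $\equiv_L^{\alpha,N}$ to $\equiv_L^{\alpha,M}$. The forward direction is immediate: a witness $a \models p$, $b \models q$ with $a \equiv_L b$ also witnesses $r(p) \equiv_L^{\alpha,M} r(q)$. For the converse, suppose $r(p) \equiv_L^{\alpha,M} r(q)$; by Remark~\ref{rem:exists =00003D for all} (which rests on Remark~\ref{rem:distance 2} and hence on $M$ being a model), \emph{every} realization of $r(p)$ is Lascar-equivalent to \emph{every} realization of $r(q)$, so picking $a \models p$ and $b \models q$ we get $a \equiv_L b$, and thus $p \equiv_L^{\alpha,N} q$.

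For the reverse Borel reduction, I would produce a Borel section $s : S_\alpha(M) \to S_\alpha(N)$ with $r \circ s = \id$. Since both spaces are compact metrizable and zero-dimensional, such a section can be constructed directly: fix an enumeration $\{U_n\}_{n<\omega}$ of a clopen basis of $S_\alpha(N)$, and for each $p$ recursively select a nested sequence of clopens $V_0 \supseteq V_1 \supseteq \cdots$ meeting $r^{-1}(p)$ by setting $V_{n+1} = V_n \cap U_n$ if the intersection with $r^{-1}(p)$ is non-empty (equivalently, $p \in r(V_n \cap U_n)$, which is a Borel condition since $r$ is closed) and $V_{n+1} = V_n \setminus U_n$ otherwise. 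The intersection is a singleton $\{s(p)\}$ in $r^{-1}(p)$, and the construction is Borel in $p$. Since $r \circ s = \id$ and $r$ is a reduction in both directions, $s$ is a Borel reduction from $\equiv_L^{\alpha,M}$ to $\equiv_L^{\alpha,N}$. This proves part~(1).

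For part~(2), the same restriction-plus-section strategy applies within the relevant KP-classes, once one knows they are compact Polish and that $r$ restricts to a surjection between them. Compactness follows because $\equiv_{KP}^{\alpha,M}$ and $\equiv_{KP}^{\alpha,N}$ are closed (Remark~\ref{rem:exists =00003D for all}). Surjectivity is the only substantive new point: given $q \in [\tp(a/M)]_{\equiv_{KP}^{\alpha,M}}$, choose witnesses $b^* \models q$ and $a^* \models \tp(a/M)$ with $b^* \equiv_{KP} a^*$; since $a^* \equiv_M a$ forces $a^* \equiv_L a$ and hence $a^* \equiv_{KP} a$, we conclude $b^* \equiv_{KP} a$, so $\tp(b^*/N)$ lies in the KP-class of $\tp(a/N)$ and is sent by $r$ to $q$. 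The main obstacle is really just this surjectivity lemma; everything else is the same routine descriptive set theory as in part~(1).
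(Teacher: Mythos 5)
Your proposal is correct and follows essentially the same route as the paper (compare the proof of Proposition \ref{prop:G_delta changing the model}): reduce to nested countable models, use the restriction map $S_{\alpha}(N)\to S_{\alpha}(M)$ as a continuous reduction in one direction and a Borel section of it in the other. The only difference is that you construct the Borel section explicitly (and spell out the surjectivity onto KP-classes for part (2)), whereas the paper simply cites these points from \cite{PillayKrupinskiSolecki}.
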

One can extend this observation to deal also with pseudo $G_{\delta}$
sets. Suppose $Y\subseteq\C^{\alpha}$ is a pseudo $G_{\delta}$ set.
For a countable model $M$, $Y_{M}=r_{\alpha,M}\left(Y\right)\subseteq S_{\alpha}\left(M\right)$
is not necessarily $G_{\delta}$. But in case $Y$ is closed under
$\equiv_{L}^{\alpha}$, it is. Indeed, $\C^{\alpha}\backslash Y$
is pseudo $F_{\sigma}$, and so $r_{\alpha,M}\left(\C^{\alpha}\backslash Y\right)$
is $F_{\sigma}$. But by Remark \ref{rem:distance 2}, $r_{\alpha,M}\left(\C^{\alpha}\backslash Y\right)\cap Y_{M}=\emptyset$. 

For a countable model $M$, $Y_{M}$ is a Polish space (as every $G_{\delta}$
set is). In addition, changing the model does not change the Borel
cardinality:
\begin{prop}
\label{prop:G_delta changing the model} Fix a pseudo $G_{\delta}$
set $Y\subseteq\C^{\alpha}$, closed under $\mathordi{\equiv_{L}^{\alpha}}$.
Then
\[
\mathordi{\equiv_{L}^{\alpha,M}}\upharpoonright Y_{M}\sim_{B}\mathordi{\equiv_{L}^{\alpha,N}}\upharpoonright Y_{N}.
\]
\end{prop}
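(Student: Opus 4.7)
The plan is to mimic the proof of Fact~\ref{fac:Change the model} from \cite{PillayKrupinskiSolecki}, adapting it to the $G_{\delta}$ setting. First I would reduce to the case $M \prec N$ by replacing the pair $M,N$ with a common countable elementary extension inside $\C$ and composing the two resulting Borel bi-reductions.

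The central object is then the continuous restriction map $\pi \colon S_{\alpha}(N) \to S_{\alpha}(M)$, $q \mapsto q \upharpoonright M$. Since $Y$ is $\equiv_{L}^{\alpha}$-invariant, $\pi$ restricts to a continuous surjection $Y_N \to Y_M$: given $p = \tp(a/M) \in Y_M$ with $a \in Y$, the type $\tp(a/N) \in Y_N$ projects to $p$. Moreover $\pi \upharpoonright Y_N$ is already a continuous reduction of $\equiv_{L}^{\alpha,N} \upharpoonright Y_N$ to $\equiv_{L}^{\alpha,M} \upharpoonright Y_M$: picking $a_i \models q_i$ for $q_1, q_2 \in Y_N$ we have $a_i \models \pi(q_i)$, and Remark~\ref{rem:exists =00003D for all} then tells us both relations are equivalent to $a_1 \equiv_{L}^{\alpha} a_2$.

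For the converse direction I would build a Borel section $s \colon Y_M \to Y_N$ of $\pi$. Granted such $s$, the same application of Remark~\ref{rem:exists =00003D for all} shows that $s$ is automatically a Borel reduction in the required direction: for $p_1, p_2 \in Y_M$ and any $a_i \models s(p_i)$, we have $a_i \models p_i$, so
\[
p_1 \equiv_{L}^{\alpha,M} p_2 \iff a_1 \equiv_{L}^{\alpha} a_2 \iff s(p_1) \equiv_{L}^{\alpha,N} s(p_2).
\]
Thus the entire content of the proposition reduces to producing a Borel section taking values in $Y_N$.

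The main obstacle is precisely this Borel section, since $Y_N$ is only $G_{\delta}$ in $S_{\alpha}(N)$ and not closed. I would fix an enumeration $\{\psi_k : k < \omega\}$ of $L_{\alpha}(N)$, together with a presentation $Y_N = \bigcap_{k} O_k$ with each $O_k$ open, and build $s(p)$ by a stage-by-stage choice of a descending sequence $C_0 \supseteq C_1 \supseteq \dots$ of clopens of $S_{\alpha}(N)$ with $C_k \subseteq O_0 \cap \dots \cap O_k$ and $C_k \cap \pi^{-1}(p) \neq \emptyset$, each $C_k$ deciding $\psi_0, \dots, \psi_k$. At every stage such a $C_k$ exists, since the non-empty Polish space $\pi^{-1}(p) \cap Y_N$ is strong Choquet (Proposition~\ref{prop:G_delta subset is Choquet}); making the canonical (lex-least) such choice and taking $s(p) \in \bigcap_k C_k \subseteq Y_N$ produces the section. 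Borel measurability of $s$ is then inherited from the observation that the condition $C \cap \pi^{-1}(p) \neq \emptyset$ on $p$ equals $\pi(C)$, which is closed in $S_{\alpha}(M)$ as the continuous image of the compact clopen $C$.
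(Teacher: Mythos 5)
Your overall route is the same as the paper's: reduce to $M \prec N$, use the restriction map $\pi \colon S_{\alpha}(N) \to S_{\alpha}(M)$ as a continuous reduction one way, and produce a Borel section of $\pi$ for the other. The paper then simply invokes \cite[Fact 1.7(i)]{PillayKrupinskiSolecki}, that a continuous surjection of compact Polish spaces admits a Borel section. The reason this suffices --- and the observation you do not make explicit --- is that $\pi^{-1}(Y_M) = Y_N$: if $\pi(q) \in Y_M$, say $\pi(q) = \tp(a/M)$ with $a \in Y$, and $b \models q$, then $a \equiv_M b$, so by Remark~\ref{rem:distance 2} $d_{\alpha}(a,b) \le 2$, hence $b \in Y$ (as $Y$ is $\equiv_{L}^{\alpha}$-closed) and $q \in Y_N$. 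This is exactly the content of the paragraph preceding the proposition. Consequently \emph{any} Borel section $\pi'$ of $\pi$ automatically restricts to a map $Y_M \to Y_N$, and there is no need to build one by hand.

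The gap is in your bespoke construction of the section. The assertion that ``at every stage such a $C_k$ exists, since the non-empty Polish space $\pi^{-1}(p) \cap Y_N$ is strong Choquet'' does not justify the extension step: you need $C_{k-1} \cap O_k \cap \pi^{-1}(p) \neq \emptyset$, and strong Choquet-ness of $\pi^{-1}(p) \cap Y_N$ does not give this --- the strong Choquet game requires tracking a point, whereas your lex-least choice of clopens does not, so nothing in the stated requirements stops the $C_k$ from shrinking onto a point of $\pi^{-1}(p) \setminus Y_N$. The step is in fact correct, but only because $\pi^{-1}(p) \subseteq Y_N \subseteq O_k$ for every $k$. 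Once that is noted, the Choquet machinery is superfluous --- compactness of the nested closed sets $C_k \cap \pi^{-1}(p)$ already suffices --- and the entire construction collapses to the paper's one-line appeal to the existence of a Borel section.
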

\begin{proof}
The proof is exactly the same as in \cite[Propositions 2.3, 2.6]{PillayKrupinskiSolecki},
but we repeat it for completeness. It is enough to establish this
when $M\subseteq N$. Let $\pi:S_{\alpha}\left(N\right)\to S_{\alpha}\left(M\right)$
be the restriction map. Then $\pi$ is a continuous map that reduces
$\mathordi{\equiv_{L}^{\alpha,N}}$ to $\mathordi{\equiv_{L}^{\alpha,M}}$.
By \cite[Fact 1.7 (i)]{PillayKrupinskiSolecki} there is a Borel section,
i.e., a Borel function $\pi':S_{\alpha}\left(M\right)\to S_{\alpha}\left(N\right)$
such that $\pi\circ\pi'=\id$. Now it follows that $\pi$ and $\pi'$
restricted to $Y_{M}$ and $Y_{N}$ witness Borel bi-reducibility. 
\end{proof}
This allows us to refer to the Borel cardinality of $\equiv_{L}^{\alpha}\upharpoonright Y$
without specifying the model.

\subsubsection{Countable or uncountable language.}

Let $T$ be any complete first order theory and $\alpha$ any ordinal.
In order to state our theorem in full generality, we shall need the
following definition:
\begin{defn}
We say that a set $Y\subseteq\C^{\alpha}$ for some small $\alpha$
is \emph{pseudo strong Choquet} if $Y_{M}$ is strong Choquet for
all $M$.\end{defn}
\begin{example}
Pseudo closed and pseudo $G_{\delta}$ sets which are closed under
$\equiv_{L}^{\alpha}$ are pseudo strong Choquet by the observation
after Fact \ref{fac:Change the model} and Proposition \ref{prop:G_delta subset is Choquet}. \end{example}
\begin{rem}
For countable $T$ and $\alpha$, ``pseudo strong Choquet'' is the
correct analog of pseudo $G_{\delta}$ for sets closed under $\equiv_{L}^{\alpha}$.
By \cite[Theorem 8.17]{KechrisClassical} if $Y\subseteq\C^{\alpha}$
is such a set, then $Y$ is pseudo strong Choquet iff $Y$ is pseudo
$G_{\delta}$ iff $Y_{M}$ is Polish for every $M$. 
\end{rem}

\subsection{Results}

Our main theorem, proved in Section \ref{sec:The-general-case}, is:
\begin{main_thm}
\label{MainThmA} Suppose $T$ is a complete countable first-order
theory, $\alpha$ a countable ordinal, and suppose $Y$ is a pseudo
$G_{\delta}$ subset of $\C^{\alpha}$ which is closed under $\mathordi{\equiv_{L}^{\alpha}}$.
If for some $a\in Y$, $\left[a\right]_{\mathordi{\equiv_{L}^{\alpha}}}$
is not $d$-bounded, then $\mathordi{\equiv_{L}^{\alpha}}\upharpoonright Y$
is non-smooth. \end{main_thm}
\begin{rem}
This theorem remains true also for many-sorted countable theories,
with the obvious adjustments. 
\end{rem}
We immediately get Conjecture 1 of \cite{PillayKrupinskiSolecki}:
\begin{cor}
\label{cor:Conjecture 1}Suppose $T$ and $\alpha$ are as above.
Suppose $K\subseteq\C^{\alpha}$ is a KP strong type. If $\mathordi{\equiv_{L}^{\alpha}}\upharpoonright K$
is not $d$-bounded, then $\mathordi{\equiv_{L}^{\alpha}}\upharpoonright K$
is non-smooth. In particular, by Remark \ref{rem:Lascar is not closed},
if $\mathordi{\equiv_{L}^{\alpha}}\upharpoonright K$ is not trivial,
then it is non-smooth. \end{cor}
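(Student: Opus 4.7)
The plan is to reduce both statements to Main Theorem \ref{MainThmA} applied with $Y := K$. First I verify the hypotheses: since $K$ is a class of the $\emptyset$-type-definable equivalence relation $\mathordi{\equiv_{KP}^{\alpha}}$, it is type-definable over any of its representatives, and because $T$ and $\alpha$ are countable, only countably many formulas are needed, so $K$ is a pseudo $G_{\delta}$ subset of $\C^{\alpha}$. Moreover $K$ is closed under $\mathordi{\equiv_{L}^{\alpha}}$, as the latter refines $\mathordi{\equiv_{KP}^{\alpha}}$.

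For the first statement, the crucial point is that all Lascar classes inside $K$ share a common diameter. Indeed, for any $a, b \in K$ one has $a \equiv b$ (since $\mathordi{\equiv_{KP}^{\alpha}}$ refines equality of types), so by homogeneity of $\C$ there exists $\sigma \in \Aut(\C)$ with $\sigma(a) = b$; since $\sigma$ preserves the Lascar graph (automorphisms preserve types, hence indiscernibility, hence Lascar distance), $\sigma([a]_{L}) = [b]_{L}$ has the same diameter as $[a]_{L}$. Consequently, $\mathordi{\equiv_{L}^{\alpha}} \upharpoonright K$ is $d$-bounded iff this common diameter is finite, so the hypothesis ``not $d$-bounded'' produces an $a \in K$ whose Lascar class has infinite diameter, and Main Theorem \ref{MainThmA} applied to $Y = K$ yields non-smoothness.

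For the ``in particular'' clause, assume $\mathordi{\equiv_{L}^{\alpha}} \upharpoonright K$ is non-trivial. The contrapositive of Remark \ref{rem:Lascar is not closed} says no Lascar class in $K$ is pseudo closed. I then need the complementary implication that a finite-diameter Lascar class is pseudo closed, which I establish via compactness: the partial type $\Gamma(x_{0}, x_{1}, x_{2}, \ldots)$ asserting that $(x_{i})$ is indiscernible is type-definable over $\emptyset$, and the consistency of $\Gamma(a, b, \cdot)$ as a condition on $(a,b)$ reduces by compactness to an infinite conjunction of first-order existential formulas in $(a,b)$, producing a type-definition of $\{(a,b) : d_{\alpha}(a,b) \leq 1\}$ over $\emptyset$. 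Applying the same compactness reduction to the $n$-fold composition of this relation gives type-definability of each $\{(a,b) : d_{\alpha}(a,b) \leq n\}$, so a Lascar class $[a]_{L}$ of diameter $n$ coincides with $\{b : d_{\alpha}(a,b) \leq n\}$ and is pseudo closed over $a$. Therefore, non-triviality forces infinite diameter, returning us to the first part.

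The main potential obstacle is the type-definability of the finite-distance relations $R_{n} := \{(a,b) : d_{\alpha}(a,b) \leq n\}$, which a priori requires existential quantification over tuples from $\C^{\alpha}$. This is resolved by the standard saturation-and-compactness principle: for any partial type $\pi(x,y,z)$ over $\emptyset$, the condition ``there exists $z$ with $\pi(a,b,z)$'' is equivalent in the saturated $\C$ to the simultaneous satisfiability of every finite sub-formula $\varphi(a,b,z)$ of $\pi$, each instance ``$\exists z\, \varphi(a,b,z)$'' being itself a first-order formula in $(a,b)$, so that the whole condition is type-definable. The same device handles $R_{n}$ by iterating $R_{1}$, completing the reduction to Main Theorem \ref{MainThmA}.
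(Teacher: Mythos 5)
Your proof is correct and follows essentially the same route as the paper's: reduce to Main Theorem \ref{MainThmA} with $Y=K$, using the fact that all Lascar classes inside $K$ are conjugate (hence share a diameter) to extract a non-$d$-bounded class, and handle the ``in particular'' clause via Remark \ref{rem:Lascar is not closed} together with the standard compactness argument that a finite-diameter Lascar class is type-definable. The paper's own proof is a one-liner that leaves these verifications (pseudo $G_{\delta}$-ness of $K$, type-definability of the bounded-distance relations) implicit; you have simply filled them in correctly.
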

\begin{proof}
Observe that if $\mathordi{\equiv_{L}^{\alpha}}\upharpoonright K$
is not $d$-bounded, then there is a $\mathordi{\equiv_{L}^{\alpha}}$-class
inside $K$ which is not $d$-bounded (else all classes will have
the same bound, since they are conjugates). \end{proof}
\begin{cor}
Suppose $T$ and $\alpha$ are as above. Then $\mathordi{\equiv_{L}^{\alpha}}$
is closed iff it is smooth. \end{cor}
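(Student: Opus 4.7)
The backward implication, that closed equivalence relations are smooth, is immediate from Proposition \ref{prop:Closed-equivalence-relations}, so I focus on the forward direction: if $\equiv_L^\alpha$ is smooth then it is closed.

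The plan is to upgrade smoothness all the way to the global equality $\equiv_L^\alpha = \equiv_{KP}^\alpha$. Once that collapse is established, we are done: Remark \ref{rem:exists =00003D for all} already records that $\equiv_{KP}^{\alpha,M}$ is closed in $S_\alpha(M) \times S_\alpha(M)$, so any relation equal to it on types over a countable model $M$ is automatically closed.

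To obtain the collapse, fix a countable model $M$, assume $\equiv_L^{\alpha,M}$ is smooth, and let $K \subseteq \C^\alpha$ be an arbitrary KP strong type. Since $K$ is pseudo closed, Remark \ref{rem:restriction map} gives that $K_M := r_{\alpha,M}(K)$ is a closed (hence Borel) subset of $S_\alpha(M)$, so the restriction $\equiv_L^{\alpha,M} \upharpoonright K_M$ inherits smoothness. Now invoke the ``in particular'' clause of Corollary \ref{cor:Conjecture 1}---which bundles Main Theorem \ref{MainThmA-1} together with Remark \ref{rem:Lascar is not closed}---to deduce that $\equiv_L^\alpha \upharpoonright K$ must be trivial; that is, $K$ consists of a single Lascar class, so $\equiv_L^\alpha$ and $\equiv_{KP}^\alpha$ agree on $K$. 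Running this for every KP class (and noting that $\equiv_L^\alpha$ refines $\equiv_{KP}^\alpha$, so pairs from distinct KP classes are identified by neither) yields $\equiv_L^\alpha = \equiv_{KP}^\alpha$ on $\C^\alpha$, and consequently $\equiv_L^{\alpha,M} = \equiv_{KP}^{\alpha,M}$ is closed.

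The only nontrivial step is the invocation of Corollary \ref{cor:Conjecture 1}, where all of the genuine descriptive-set-theoretic and model-theoretic content is packaged. The remaining items---inheritance of smoothness by the Borel restriction to $K_M$, and the transfer of the identity $\equiv_L^\alpha = \equiv_{KP}^\alpha$ from $\C^\alpha$ down to $S_\alpha(M)$---are routine, so I do not expect any additional obstacle.
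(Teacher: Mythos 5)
Your proposal is correct and is essentially the paper's own argument stated in contrapositive form: the paper goes ``not closed $\Rightarrow$ $\mathord{\equiv_{L}^{\alpha}}\neq\mathord{\equiv_{KP}^{\alpha}}$ $\Rightarrow$ some KP class with non-trivial Lascar restriction $\Rightarrow$ non-smooth by Corollary \ref{cor:Conjecture 1},'' while you run the same chain forwards from smoothness to the collapse $\mathord{\equiv_{L}^{\alpha}}=\mathord{\equiv_{KP}^{\alpha}}$ and then use closedness of $\mathord{\equiv_{KP}^{\alpha,M}}$. The ingredients (Corollary \ref{cor:Conjecture 1}, Remark \ref{rem:exists =00003D for all}, Proposition \ref{prop:Closed-equivalence-relations}) are identical, so there is nothing to add.
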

\begin{proof}
If $\mathordi{\equiv_{L}^{\alpha}}$ is not closed, then $\mathordi{\equiv_{L}^{\alpha}}\neq\mathordi{\equiv_{KP}^{\alpha}}$,
so there is a KP strong type $K$ such that $\mathordi{\equiv_{L}^{\alpha}}\upharpoonright K$
is not trivial, so $\mathordi{\equiv_{L}^{\alpha}}\upharpoonright K$
is not smooth, so also $\mathordi{\equiv_{L}^{\alpha}}$. The other
direction follows from Proposition \ref{prop:Closed-equivalence-relations}. \end{proof}
\begin{rem}
Since our main result concerns $\EZero$, it actually says something
about the ``definable cardinality'' of $\mathordi{\equiv_{L}^{\alpha}}$,
i.e., it is stronger than just saying something about the Borel cardinality
of $\mathordi{\equiv_{L}^{\alpha}}$, but also allows reductions to
be ``definable''. In the proof of Proposition \ref{prop:E_0 is not definable smooth},
we showed that there are no separating sets for $\EZero$ with the
Baire property. In any reasonable interpretation of the term, any
``definable'' reduction of $\EZero$ to $\Delta\left(Y\right)$
for some Polish space $Y$ will give rise to such separating sets.
So our main result implies that the ``definable cardinality'' of
$\mathordi{\equiv_{L}^{\alpha}}$ is greater than $\Delta\left(2^{\omega}\right)$.
We will not give an exact definition of ``definable cardinality''
(see more in \cite[Chapter 8]{BeckerKechris}).
\end{rem}
For a general language and $\alpha$ we have:
\begin{main_thm}
\label{MainThmB} {[}Simplified version{]} Suppose $T$ is a complete
first-order theory, $\alpha$ a small ordinal. Suppose $Y\subseteq\C^{\alpha}$
is closed under $\mathordi{\equiv_{L}^{\alpha}}$ and for some $a\in Y$,
$\left[a\right]_{\mathordi{\equiv_{L}^{\alpha}}}$ is not $d$-bounded.
Suppose $Y$ is pseudo strong Choquet. Then $\left|Y/\mathordi{\equiv_{L}^{\alpha}}\right|\geq2^{\aleph_{0}}$.
\end{main_thm}
The full theorem says a bit more, see \ref{Thm:MainThmB}. 
\begin{cor}
Fact \ref{fac:Newelski} holds for any theory $T$ and any small ordinal
$\alpha$.\end{cor}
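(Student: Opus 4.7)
The plan is to derive each of the four parts of Fact~\ref{fac:Newelski} from Main Theorem~\ref{MainThmB} by applying it to an appropriate invariant set $Y\subseteq\C^\alpha$ which is pseudo strong Choquet and contains a class that is not $d$-bounded. In every case, pseudo strong Choquetness will be verified via the example in Subsection~\ref{sub:Context}, according to which a pseudo closed or pseudo $G_\delta$ set closed under $\equiv_L^\alpha$ is automatically pseudo strong Choquet.

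For part~(2), the hypothesis directly supplies a pseudo closed invariant $Y$ every class of which has infinite diameter. Fixing any $a\in Y$, the class $[a]_{\equiv_L^\alpha}$ is not $d$-bounded (as it consists of a single class of infinite diameter), and Main Theorem~\ref{MainThmB} yields $|Y/\equiv_L^\alpha|\geq 2^{\aleph_0}$.

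Parts~(1) and~(3) I would prove by taking $Y$ to be a single Lascar strong type $[a]_{\equiv_L^\alpha}$. The ``finite diameter $\Rightarrow$ type-definable'' direction of~(1) is routine: the set $\{x : d(x,a)\leq n\}$ is type-definable over $a$ by a standard saturation argument (projecting out the intermediate points on the indiscernible sequences witnessing the distance), and invariance then promotes this to type-definability over $\emptyset$ via Remark~\ref{rem:Lascar is not closed}. For the converse direction of~(1), and for~(3), I would argue by contradiction: if a single Lascar strong type $[a]_{\equiv_L^\alpha}$ of unbounded diameter were pseudo closed (respectively, pseudo $G_\delta$), then taking $Y:=[a]_{\equiv_L^\alpha}$ yields a pseudo strong Choquet set with a single Lascar class which is not $d$-bounded; Main Theorem~\ref{MainThmB} would then force $|Y/\equiv_L^\alpha|\geq 2^{\aleph_0}$, contradicting that $Y$ is a single class.

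For part~(4), I would assume for contradiction that $T$ is small but $\equiv_L^n\neq\equiv_{KP}^n$ for some $n<\omega$, and pick a KP-class $K\subseteq\C^n$ containing a proper Lascar class $[a]_{\equiv_L^n}\subsetneq K$. By Remark~\ref{rem:Lascar is not closed}, $[a]_{\equiv_L^n}$ is not pseudo closed, and so by part~(1) it has infinite diameter. Applying Main Theorem~\ref{MainThmB} to $Y:=K$ would yield $|K/\equiv_L^n|\geq 2^{\aleph_0}$. The hard part is then to convert this cardinality bound into a contradiction with the hypothesis that $|S_n(\emptyset)|\leq\aleph_0$; this is the main obstacle in the deduction, and requires leveraging the specific form of smallness, for instance by analyzing the action of the Lascar Galois group on the Lascar classes inside a KP-class on finite tuples, or by passing to a prime atomic model and exploiting the countability of types over finite subsets.
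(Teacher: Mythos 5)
Your treatment of parts (1), (2), (3) matches the paper's in spirit: the paper says these ``follow immediately from Main Theorem~\ref{MainThmB},'' and your application of Main Theorem~\ref{MainThmB} to $Y=[a]_{\equiv_L^\alpha}$ (resp.\ the given closed $Y$) to contradict having only one (resp.\ finitely many) Lascar classes is the intended argument. One small caution: ``pseudo strong Choquet'' as defined requires $Y_N$ to be strong Choquet for \emph{every} model $N$, so for part~(3) you should justify why having a $G_\delta$ pullback over the particular model in question suffices; this can be done via an argument analogous to Proposition~\ref{prop:G_delta changing the model}, but you should not pass over it silently.

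Part~(4) is where your proposal has a genuine gap, and you correctly flag it but do not close it. Getting $|K/\mathord{\equiv_L^n}|\geq 2^{\aleph_0}$ simply does not contradict $|S_n(\emptyset)|\leq\aleph_0$: all of those Lascar classes lie inside a single type over $\emptyset$, so the cardinality of $S_n(\emptyset)$ says nothing about how many Lascar classes can coexist in one fiber, and indeed $|S_n(M)|$ can be $2^{\aleph_0}$ even for small $T$ and countable $M$. There is no obvious way to complete the cardinality route, and the vague suggestions (Lascar Galois group action, prime atomic models) do not supply one. The paper avoids counting altogether and instead uses smallness \emph{topologically}: since $S_n(a)$ is a countable Polish space, \emph{every} subset of it is $G_\delta$, in particular the set $Q$ of types $q\in S_n(a)$ all of whose realizations are Lascar-equivalent to $a$. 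Pulling $Q$ back along the (continuous) restriction map $S_n(M)\to S_n(a)$ exhibits the Lascar class of $\tp(a/M)$ as a $G_\delta$ subset of $S_n(M)$, whence by part~(3) (already established) that class has finite diameter, and then Fact~\ref{fac:Newelski}(1) together with Remark~\ref{rem:Lascar is not closed} forces $\equiv_L^n=\equiv_{KP}^n$. So the intended deduction for (4) applies Main Theorem~\ref{MainThmB} only indirectly, through the already-derived parts (1) and (3), rather than head-on to the KP class.
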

\begin{proof}
(1), (2) and (3) follow immediately from Main Theorem \ref{MainThmB}.
(3) is also connected to Corollary \ref{cor:G_delta=00003D>smooth}. 

(4) Suppose $T$ is small. Let $n<\omega$, let $a$ be some tuple
of length $n$ and let $Y=S_{n}\left(a\right)$. This is a countable
Polish space. Thus every subset of $Y$ is $G_{\delta}$, in particular
the set 
\[
Q=\left\{ q\in S_{n}\left(a\right)\left|\,\forall b\models q\,\left(b\equiv_{L}^{n}a\right)\right.\right\} .
\]
 (which can also can also be defined with $\exists$). Let $M$ be
any countable model containing $a$. Then the restriction map $\pi:S\left(M\right)\to S\left(a\right)$
is continuous. Thus, $\pi^{-1}\left(Q\right)$ is also $G_{\delta}$.
But it is exactly the Lascar strong type of $a$ in $S\left(M\right)$.
By (3), this class is $d$-bounded, but then by Remark \ref{rem:Lascar is not closed}
$\mathordi{\equiv_{L}^{n}}\upharpoonright\left[a\right]_{\mathordi{\equiv_{KP}^{n}}}$
is trivial and hence $\mathordi{\equiv_{KP}^{n}}=\mathordi{\equiv_{L}^{n}}$. 
\end{proof}

\section{\label{sec:Descriptive-set-theoretic}Descriptive set theoretic lemmas }

\subsection{Polish spaces}

Given a group $\Gamma$ of homeomorphisms of a topological space $X$,
we use $E_{\Gamma}^{X}$ to denote the corresponding orbit equivalence
relation. Although the following fact can be seen as a consequence
of the proof of \cite[Theorem 3.4.5]{BeckerKechris}, for the sake
of completeness we provide a proof.
\begin{thm}
\label{theorem:meagerembedding} Suppose that $X$ is a perfect Polish
space, $\Gamma$ is a group of homeomorphisms of $X$ with a dense
orbit, and $R\subseteq X\x X$ is a meager set. Then there is a continuous,
injective homomorphism $\phi:2^{\omega}\to X$ from $\left(\EZero,\twiddle\EZero\right)$
to $\left(E_{\Gamma}^{X},\twiddle R\right)$. \end{thm}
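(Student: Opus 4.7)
The plan is a fusion argument modeled on \cite[Theorem 3.4.5]{BeckerKechris}. Since $R$ is meager, write $R\subseteq\bigcup_{n<\omega}F_{n}$ with each $F_{n}\subseteq X\x X$ closed and nowhere dense, and fix a compatible complete metric on $X$. I will recursively build a Cantor scheme $\set{U_{s}}{s\in 2^{<\omega}}$ of nonempty open subsets of $X$ with $\overline{U_{s\concat i}}\subseteq U_{s}$ and $\diam U_{s}\to 0$, together with group elements $\gamma_{n}\in\Gamma$, subject to the following two conditions:
\begin{itemize}
\item[$(\ast)$] for each $n$ and each $s\in 2^{<\omega}$ with $|s|>n$, $\gamma_{n}\cdot U_{s}=U_{s'}$, where $s'$ is obtained from $s$ by flipping its $n$-th coordinate;
\item[$(\ast\ast)$] for each $n$ and each $s,s'\in 2^{<\omega}$ of equal length which disagree at some position $\geq n$, $(U_{s}\x U_{s'})\cap F_{n}=\emptyset$.
\end{itemize}
Setting $\{\phi(\eta)\}:=\bigcap_{n}U_{\eta\upharpoonright n}$ then yields a continuous injection $2^{\omega}\to X$ by the standard Cantor-scheme argument.

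Given $(\ast)$ and $(\ast\ast)$, the homomorphism properties follow cleanly. If $\eta\EZero\nu$ differ exactly at positions $n_{1},\ldots,n_{k}$, then iterating $(\ast)$ shows that the product $\gamma_{n_{1}}\cdots\gamma_{n_{k}}\in\Gamma$ maps $U_{\eta\upharpoonright N}$ onto $U_{\nu\upharpoonright N}$ for every $N>\max n_{i}$, and therefore maps $\phi(\eta)$ to $\phi(\nu)$; so $\phi(\eta)\mathrel{E_{\Gamma}^{X}}\phi(\nu)$. If $\eta\twiddle\EZero\nu$, then for each $n$ there is some $m\geq n$ with $\eta(m)\neq\nu(m)$, so $(\ast\ast)$ applied at any length $\ell>m$ gives $(U_{\eta\upharpoonright\ell}\x U_{\nu\upharpoonright\ell})\cap F_{n}=\emptyset$; since $(\phi(\eta),\phi(\nu))\in U_{\eta\upharpoonright\ell}\x U_{\nu\upharpoonright\ell}$, this forces $(\phi(\eta),\phi(\nu))\notin F_{n}$ for every $n$, whence $(\phi(\eta),\phi(\nu))\notin R$.

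The main obstacle is the inductive step: given $\set{U_{s}}{s\in 2^{n}}$ and $\gamma_{0},\ldots,\gamma_{n-1}$ satisfying the conditions, I need to extend to $\set{U_{s}}{s\in 2^{n+1}}$ and a new $\gamma_{n}\in\Gamma$. The natural approach is to fix a small nonempty open $W\subseteq U_{0^{n}}$, use density of a $\Gamma$-orbit to choose $\gamma_{n}\in\Gamma$ with $\gamma_{n}W\subseteq U_{0^{n}}$ and $\overline{\gamma_{n}W}\cap\overline{W}=\emptyset$, and then set $U_{s\concat 0}:=\sigma_{s}\cdot W$ and $U_{s\concat 1}:=\sigma_{s}\gamma_{n}\cdot W$, where $\sigma_{s}$ is any word in $\gamma_{0},\ldots,\gamma_{n-1}$ sending $U_{0^{n}}$ to $U_{s}$ (which exists by the previous stage's $(\ast)$). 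The delicate point is that $(\ast)$ at level $n+1$ for the old $\gamma_{k}$ with $k<n$ requires the finitely many relevant commutators among $\{\gamma_{0},\ldots,\gamma_{n}\}$ to stabilize $W$ setwise --- equivalently, $W$ must be invariant under the kernel of the induced action of $\langle\gamma_{0},\ldots,\gamma_{n}\rangle$ on the finite set $\{U_{s}:s\in 2^{n+1}\}$. One arranges this by shrinking $W$ using continuity of the finitely many group elements already defined, and by exploiting the freedom in choosing $\gamma_{n}$ from within a dense set of placements inside $U_{0^{n}}$; perfectness of $X$ ensures these shrinkings keep $W$ nonempty. Once coherence is secured, $(\ast\ast)$ at the new level is a standard Baire-category step: the finitely many new pairs contribute finitely many closed nowhere-dense obstructions in $X\x X$, all avoidable by a final shrinking of $W$.
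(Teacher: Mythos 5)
Your overall strategy --- a fusion of a Cantor scheme together with group elements chosen via density of an orbit, with one family of conditions to control orbit-equivalence and another to dodge the meager set --- is indeed the right shape of argument, and the paper's proof is of exactly this form. However, your condition $(\ast)$ is substantially stronger than anything actually needed or achievable, and the step where you wave it through is a genuine gap. Requiring $\gamma_{n}\cdot U_{s}=U_{s'}$ for \emph{all} $s$ with $\left|s\right|>n$ forces $\gamma_{n}$ to act as the ``flip the $n$-th bit'' involution on the entire remaining tree of cells. Already at level $1$ this forces $\gamma_{0}^{2}\cdot W=W$ while also $\gamma_{0}\cdot W\cap W=\emptyset$; at deeper levels it forces every commutator of the $\gamma_{k}$'s to setwise stabilize $W$. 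You acknowledge this but assert it can be arranged by shrinking $W$ and using freedom in the choice of $\gamma_{n}$. That assertion fails for a general $\Gamma$ with a dense orbit. For a concrete obstruction, let $X$ be the Cantor space and $\Gamma=\left\langle g\right\rangle $ for $g$ the odometer on $\prod_{n}\mathbb{Z}/p_{n}\mathbb{Z}$ with the $p_{n}$ distinct odd primes: every orbit is dense, so the hypotheses of the theorem are met. Any candidate $\gamma_{0}$ is $g^{k}$ for some $k\neq0$, and $\gamma_{0}^{2}=g^{2k}$ is again a minimal homeomorphism, hence setwise stabilizes no proper nonempty open set; shrinking $W$ cannot help, and the construction dies at level $1$.

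The missing idea --- present in the paper's proof and in \cite[Theorem 3.4.5]{BeckerKechris} --- is to decouple the tree of cells from any requirement that individual $\gamma_{n}$'s permute them. Instead one builds a single shrinking sequence of neighborhoods $X_{n}$ and group elements $\gamma_{n}$, and the cell indexed by $s\in2^{n}$ is \emph{defined} to be $\gamma_{s}\cdot X_{n}$, where $\gamma_{s}=\prod_{i<n}\gamma_{i}^{s(i)}$ is the ordered product. The nesting $\overline{X_{n+1}}\subseteq X_{n}\cap\gamma_{n}^{-1}\cdot X_{n}$ alone guarantees $\gamma_{s\concat i}\cdot X_{n+1}\subseteq\gamma_{s}\cdot X_{n}$. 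The homomorphism property for $\EZero$ then follows from the purely formal identity $\gamma_{s\concat t}=\gamma_{s}\,\gamma_{0^{\left|s\right|}\concat t}$, which gives $\phi(s\concat y)=\gamma_{s}\cdot\phi\left(0^{\left|s\right|}\concat y\right)$ with no commutativity, no involutivity, and no setwise stabilization needed. The meager set is avoided by fixing a decreasing sequence of dense open symmetric irreflexive $U_{n}$ with $\bigcap_{n}U_{n}\cap R=\emptyset$ and arranging $\left(\gamma_{s}\cdot X_{n+1}\right)\times\left(\gamma_{t}\cdot X_{n+1}\right)\subseteq U_{n}$ whenever $s(n)\neq t(n)$; density of the orbit and of $U_{n}$ is exactly what lets one find the new $\gamma_{n}$ and $x_{n+1}$ at each stage. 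Your $(\ast\ast)$ and the final Baire-category remark match this part closely; it is the rigid algebraic demand in $(\ast)$ that has to be replaced.
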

\begin{proof}
We use $1_{\Gamma}$ to denote the identity element of $\Gamma$.
Given a natural number $n$ and a sequence $\left\langle \gamma_{i}\left|\, i<n\right.\right\rangle $
of elements of $\Gamma$, we use $\prod_{i<n}\gamma_{i}$ to denote
$1_{\Gamma}$ when $n=0$, and the product $\gamma_{0}\cdots\gamma_{n-1}$
when $n>0$. When $\left\langle \gamma_{i}\left|\, i<n\right.\right\rangle $
is constant with value $\gamma$, we also use $\gamma^{n}$ to denote
$\prod_{i<n}\gamma_{i}$.

As $X$ is perfect, the set of pairs of distinct points of $X$ is
comeager, so there is a decreasing sequence $\sequence{U_{n}}{n\in\Nn}$
of dense, irreflexive, open, symmetric subsets of $X\times X$ whose
intersection is disjoint from $R$. We will recursively construct
group elements $\gamma_{n}\in\Gamma$, with which we associate the
products $\gamma_{s}=\prod_{i<n}\gamma_{i}^{s(i)}$, for all $n\in\Nn$
and $s\in2^{n}$. We will simultaneously construct points $x_{n}\in X$
and open neighborhoods $X_{n}$ of $x_{n}$ with the following properties: 
\begin{enumerate}
\item $\overline{X_{n+1}}\subseteq X_{n}\cap(\gamma_{n}^{-1}\cdot X_{n})$. 
\item $\forall s\in2^{n+1}\,\diam\left(\gamma_{s}\cdot X_{n+1}\right)\le1/n$. 
\item $\forall s,t\in2^{n+1}\,\left(s\left(n\right)\neq t\left(n\right)\Rightarrow(\gamma_{s}\cdot X_{n+1})\x(\gamma_{t}\cdot X_{n+1})\subseteq U_{n}\right)$. 
\end{enumerate}
We begin by fixing an arbitrary point $x_{0}\in X$ and setting $X_{0}=X$.

Suppose now that $n\in\Nn$ and we have already found $\sequence{\gamma_{m}}{m<n}$,
$x_{n}$, and $X_{n}$. The fact that $\Gamma$ consists of homeomorphisms
then ensures that the set 
\[
V_{n}=\bigcap\set{\left(\gamma_{s}\x\gamma_{t}\right)^{-1}\left(U_{n}\right)}{\left(s,t\right)\in2^{n}\x2^{n}}
\]
 is dense and open, so the fact that $\Gamma$ has a dense orbit yields
$\gamma_{n}\in\Gamma$ and $x_{n+1}\in X_{n}\cap(\gamma_{n}^{-1}\cdot X_{n})$
for which $\left(x_{n+1},\gamma_{n}\cdot x_{n+1}\right)\in V_{n}$.
As $\Gamma$ consists of homeomorphisms and $U_{n}$ is symmetric,
there is an open neighborhood $X_{n+1}$ of $x_{n+1}$ satisfying
conditions (1) -- (3). This completes the recursive construction.

Conditions (1) and (2) ensure that we obtain a continuous function
$\phi:2^{\omega}\to X$ by setting $\phi(c)=\lim_{n\to\infty}\gamma_{c\restriction n}\cdot x_{n}$.
To see that $\phi$ is a homomorphism from $\EZero$ to $E_{\Gamma}^{X}$,
it is sufficient to observe that if $k\in\Nn$, $s\in2^{k}$, and
$y\in2^{\omega}$, then 
\[
\phi(s\concat y)=\lim_{n\to\infty}\gamma_{s\concat y\restriction n}\cdot x_{n}=\lim_{n\to\infty}\gamma_{s}\gamma_{\left(0\right)^{k}\concat y\restriction n}\cdot x_{n}=\gamma_{s}\cdot\phi\left(\left(0\right)^{k}\concat y\right).
\]
Observe now that if $y,z\in2^{\omega}$ and $y\left(n\right)\neq z\left(n\right)$,
then conditions (1) and (3) ensure that $\left(\phi\left(y\right),\phi\left(z\right)\right)\in\left(\gamma_{y\restriction\left(n+1\right)}\cdot X_{n+1}\right)\times\left(\gamma_{z\restriction\left(n+1\right)}\cdot X_{n+1}\right)\subseteq U_{n}$,
so the irreflexivity of $U_{n}$ yields the injectivity of $\phi$,
and the fact that $\sequence{U_{n}}{n\in\Nn}$ is a decreasing sequence
whose intersection is disjoint from $R$ ensures that $\phi$ is a
homomorphism from $\twiddle\EZero$ to $\twiddle R$.
\end{proof}
Given $R\subseteq X\times X$ and $x\in X$, define $R_{x}=\set{y\in X}{x\mathrela Ry}$.
\begin{thm}
\label{theorem:embedding} Suppose that $X$ is a Polish space, $\sequence{R_{n}}{n\in\Nn}$
is a sequence of $F_{\sigma}$ subsets of $X\x X$, $\Gamma$ is a
group of homeomorphisms of $X$, and $\mathcal{O}\subseteq X$ is
an orbit of $\Gamma$ with the property that for all $n\in\Nn$ and
open sets $U\subseteq X$ intersecting $\mathcal{O}$, there are distinct
$x,y\in\calO\cap U$ with $\calO\cap\left(R_{n}\right)_{x}\cap\left(R_{n}\right)_{y}=\emptyset$.
Then there is a continuous, injective homomorphism $\phi:2^{\omega}\to\overline{\calO}$
from $\left(\EZero,\twiddle\EZero\right)$ to $\left(E_{\Gamma}^{X},\twiddle\bigcup\set{R_{n}}{n\in\Nn}\right)$. \end{thm}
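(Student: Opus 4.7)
The plan is to reduce the statement to Theorem~\ref{theorem:meagerembedding}, applied on the closed subspace $\overline{\calO}$ in place of $X$. First I would check the preliminary setup. The space $\overline{\calO}$ is Polish (being closed in a Polish space), and it is perfect: given $p \in \overline{\calO}$ and any open neighborhood $U$ of $p$ in $X$, density gives $U \cap \calO \neq \emptyset$, and then the avoidance hypothesis (say with $n=0$) yields distinct $x,y \in \calO \cap U \subseteq \overline{\calO} \cap U$, so $p$ is not isolated. Each $\gamma \in \Gamma$ restricts to a homeomorphism of $\overline{\calO}$ since $\gamma(\overline{\calO}) = \overline{\gamma(\calO)} = \overline{\calO}$, and the restricted action has $\calO$ as a dense orbit.

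The heart of the argument is to show that $R := \bigcup_{n \in \Nn} R_{n}$ has meager intersection with $\overline{\calO} \x \overline{\calO}$. Write each $R_{n}$ as a countable union of closed sets $F_{n,k} \subseteq X \x X$; it suffices to verify that $F_{n,k} \cap (\overline{\calO} \x \overline{\calO})$ is nowhere dense in $\overline{\calO} \x \overline{\calO}$ for every $n,k$. If not, since this set is closed in $\overline{\calO} \x \overline{\calO}$, it would contain a basic open rectangle $(U \cap \overline{\calO}) \x (V \cap \overline{\calO})$ with $U, V$ open in $X$ and meeting $\calO$. Applying the avoidance hypothesis to the open set $U$ and the index $n$ produces distinct $x, y \in \calO \cap U$ with $\calO \cap (R_{n})_{x} \cap (R_{n})_{y} = \emptyset$; but every $z \in \calO \cap V$ satisfies $(x,z),(y,z) \in F_{n,k} \subseteq R_{n}$, so $z$ lies in $\calO \cap (R_{n})_{x} \cap (R_{n})_{y}$, contradicting $\calO \cap V \neq \emptyset$.

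With the hypotheses of Theorem~\ref{theorem:meagerembedding} verified for $\overline{\calO}$, $\Gamma$, and $R \cap (\overline{\calO} \x \overline{\calO})$, one obtains a continuous, injective homomorphism $\phi\colon 2^{\omega} \to \overline{\calO}$ from $(\EZero, \twiddle \EZero)$ to $(E_{\Gamma}^{\overline{\calO}}, \twiddle (R \cap (\overline{\calO} \x \overline{\calO})))$. Since $E_{\Gamma}^{\overline{\calO}}$ is simply the restriction of $E_{\Gamma}^{X}$ to $\overline{\calO}$, and the complement of $R \cap (\overline{\calO})^{2}$ in $\overline{\calO} \x \overline{\calO}$ is exactly the restriction of the complement of $\bigcup_{n} R_{n}$, the same $\phi$ serves as the required homomorphism to $(E_{\Gamma}^{X}, \twiddle \bigcup_{n} R_{n})$. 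The principal technical hurdle is the meagerness verification, where the single-open-set form of the avoidance hypothesis must be exploited to rule out product rectangles; as shown above, it is in fact strong enough to kill each closed piece $F_{n,k}$ separately, so the countable union is meager.
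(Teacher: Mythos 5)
Your proposal is correct and follows essentially the same route as the paper: reduce to Theorem~\ref{theorem:meagerembedding} on $\overline{\calO}$ by checking perfectness and meagerness of $\bigcup_{n}R_{n}$ restricted to $\overline{\calO}\x\overline{\calO}$, with both verifications driven by the avoidance hypothesis exactly as in the paper (the paper phrases the meagerness step as density of each $G_{\delta}$ set $\twiddle R_{n}\restriction\overline{\calO}$ rather than nowhere density of the closed pieces $F_{n,k}$, but these are the same argument).
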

\begin{proof}
In light of Theorem \ref{theorem:meagerembedding}, it is sufficient
to show that $\overline{\calO}$ is perfect and $\bigcup\set{R_{n}}{n\in\Nn}\upharpoonright\overline{\calO}$
is meager. For the former, observe that if $U\subseteq X$ is an open
set intersecting $\overline{\calO}$ , then it intersects $\calO$,
so there are distinct $x,y\in\calO\cap U$. For the latter, it is
sufficient to check that each of the sets $\twiddle R_{n}\restriction\overline{\calO}$
is dense. Towards this end, suppose that $U,V\subseteq X$ are open
sets intersecting $\overline{\calO}$, and therefore $\calO$. Then
there exist $x,y\in\calO\cap U$ with $\calO\cap\left(R_{n}\right)_{x}\cap\left(R_{n}\right)_{y}=\emptyset$,
as well as $z\in\calO\cap V$, so $\neg x\mathrela{R_{n}}z$ or $\neg y\mathrela{R_{n}}z$,
thus $\twiddle R_{n}\cap\calO\cap\left(U\x V\right)\neq\emptyset$.
\end{proof}
We are going to apply this in our context via:
\begin{cor}
\label{cor:our situation} Let $T$ be a countable first order theory,
let $\alpha$ be a countable ordinal and $M$ a countable model. Let
$Y$ be a Polish subspace of $S_{\alpha}\left(M\right)$ that is closed
under $\equiv_{L}^{\alpha,M}$. Suppose that there is some $\xx\in Y$
such that for every open set $U\ni\xx$ and for all $N\in\Nn$, there
exist some $\sigma\in\Autf\left(\C\right)$ such that:
\begin{enumerate}
\item The automorphism $\sigma^{*}$ that $\sigma$ induces on $S_{\alpha}\left(M\right)$
fixes $Y$ setwise.
\item $\sigma^{*}\left(\xx\right)\in U$ and $N<d_{\alpha}\left(\sigma^{*}\left(\xx\right),\xx\right)$
(see Definition \ref{def:distance on types}). 
\end{enumerate}
Then there is a continuous, injective homomorphism $\phi:2^{\omega}\to Y$
from $\left(\EZero,\twiddle\EZero\right)$ to $\left(\mathordi{\equiv_{L}^{\alpha,M}},\twiddle\mathordi{\equiv_{L}^{\alpha,M}}\right)$.
In particular, $\mathordi{\equiv_{L}^{\alpha,M}}\upharpoonright Y$
is not smooth. \end{cor}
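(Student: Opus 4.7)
The plan is to apply Theorem \ref{theorem:embedding} with $X = Y$ (which is Polish by hypothesis), with the sequence $R_{n} = F_{n} \cap \left(Y \times Y\right)$ where $F_{n} = \left\{\left(p,q\right) \in S_{\alpha}\left(M\right) : d_{\alpha}\left(p,q\right) \leq n\right\}$, and with $\Gamma$ the group of homeomorphisms of $Y$ induced by those Lascar strong automorphisms of $\C$ that setwise fix $M$ and whose induced action $\sigma^{*}$ on $S_{\alpha}\left(M\right)$ fixes $Y$ setwise. Each $F_{n}$ is closed in $S_{\alpha}\left(M\right) \times S_{\alpha}\left(M\right)$ by Remark \ref{rem:exists =00003D for all}, so each $R_{n}$ is closed (hence $F_{\sigma}$) in $Y \times Y$, and $\bigcup_{n} R_{n} = \mathordi{\equiv_{L}^{\alpha,M}} \upharpoonright Y$. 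Let $\calO$ be the $\Gamma$-orbit of $\xx$.

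The first key observation is that $E_{\Gamma}^{Y}$ is contained in $\mathordi{\equiv_{L}^{\alpha,M}} \upharpoonright Y$: if $\sigma$ is Lascar strong and $a \models p$, then $\sigma\left(a\right) \models \sigma^{*}\left(p\right)$ and $a \equiv_{L}^{\alpha} \sigma\left(a\right)$ by Fact \ref{fac:Lascar Strong types, automorphisms, connection}. Therefore, a homomorphism from $\EZero$ to $E_{\Gamma}^{Y}$ and from $\twiddle\EZero$ to $\twiddle \bigcup_{n} R_{n}$ is automatically a homomorphism into $\left(\mathordi{\equiv_{L}^{\alpha,M}}, \twiddle \mathordi{\equiv_{L}^{\alpha,M}}\right)$, which is exactly what we want.

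It remains to verify the hypothesis of Theorem \ref{theorem:embedding}: given $n < \omega$ and an open $U \subseteq Y$ with $U \cap \calO \neq \emptyset$, we must produce distinct $x,y \in U \cap \calO$ with $\calO \cap \left(R_{n}\right)_{x} \cap \left(R_{n}\right)_{y} = \emptyset$. Pick any $x = \tau^{*}\left(\xx\right) \in U \cap \calO$ with $\tau \in \Gamma$. Then $V = (\tau^{*})^{-1}\left(U\right)$ is an open neighborhood of $\xx$ in $Y$, so applying the assumption of the corollary to $V$ with $N = 2n$ yields a Lascar strong automorphism $\sigma$ such that $\sigma^{*}$ preserves $Y$ setwise, $\sigma^{*}\left(\xx\right) \in V$, and $d_{\alpha}\left(\sigma^{*}\left(\xx\right), \xx\right) > 2n$. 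Set $y = \tau^{*}\sigma^{*}\left(\xx\right) \in U \cap \calO$. Since Lascar strong automorphisms preserve the Lascar distance between types (a straightforward consequence of the fact that $d_{\alpha}$ on tuples is $\Aut\left(\C\right)$-invariant), we have $d_{\alpha}\left(x,y\right) > 2n$, so in particular $x \neq y$. If some $z \in \calO$ satisfied $d_{\alpha}\left(x, z\right) \leq n$ and $d_{\alpha}\left(y, z\right) \leq n$, then the triangle inequality for $d_{\alpha}$ on $S_{\alpha}\left(M\right)$ would give $d_{\alpha}\left(x,y\right) \leq 2n$, a contradiction. Applying Theorem \ref{theorem:embedding} now yields the desired $\phi$, and since $\EZero$ is non-smooth by Proposition \ref{prop:E_0 is not definable smooth}, $\mathordi{\equiv_{L}^{\alpha,M}} \upharpoonright Y$ is non-smooth as well. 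The only subtle point is the bookkeeping ensuring that the induced action $\sigma^{*}$ on $S_{\alpha}\left(M\right)$ genuinely restricts to a homeomorphism of $Y$, which is guaranteed by condition (1) of the hypothesis.
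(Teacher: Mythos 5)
Your proposal is correct and follows essentially the same route as the paper: apply Theorem \ref{theorem:embedding} to $Y$ with $R_{n}$ the distance-$\leq n$ relations and $\Gamma$ the induced group of homeomorphisms, verify the orbit hypothesis by translating the given neighborhood back to $\xx$, applying the assumption with $N=2n$, and using invariance of the Lascar metric plus the triangle inequality. Your explicit check that $E_{\Gamma}^{Y}\subseteq\mathordi{\equiv_{L}^{\alpha,M}}$ and $\bigcup_{n}R_{n}=\mathordi{\equiv_{L}^{\alpha,M}}\upharpoonright Y$ is left implicit in the paper but is exactly the right bookkeeping.
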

\begin{proof}
For $n<\omega$, let $R_{n}$ be the closed set $\set{\left(p,q\right)\in Y\x Y}{d_{\alpha}\left(p,q\right)\leq n}$.
Let $\Gamma$ be the group of homeomorphisms of $Y$ which are induced
by automorphisms in $\Autf\left(\C\right)$ which fix $Y$ setwise.
Let $\calO$ be the orbit of $\xx$ under $\Gamma$. 

Let $n\in\Nn$ and let $W$ be an open set which intersects $\calO$.
Then for some $\gamma\in\Gamma$, $\gamma\left(\xx\right)\in W$.
Let $U=\gamma^{-1}\left(W\right)$. Then for some $\sigma\in\Autf\left(\C\right)$,
$\sigma^{*}\in\Gamma$, $\sigma^{*}\left(\xx\right)\in U$ and $2n<d_{\alpha}\left(\sigma^{*}\left(\xx\right),\xx\right)$.
Let $x=\gamma\left(\xx\right)$ and $y=\gamma\sigma^{*}\left(\xx\right)$.
So $d_{\alpha}\left(x,y\right)=d_{\alpha}\left(\xx,\sigma^{*}\left(\xx\right)\right)>2n$
and so $x$ and $y$ are distinct and $\calO\cap\left(R_{n}\right)_{x}\cap\left(R_{n}\right)_{y}=\emptyset$. 
\end{proof}

\subsection{\label{sub:Choquet-spaces}Choquet spaces}

In order to prove Main Theorem \ref{MainThmB}, we have to work over
a model $M$ of possibly uncountable size, hence $S(M)$ is no longer
a Polish space. The idea is to mimic the proof of Main Theorem \ref{MainThmA},
i.e., construct step-by-step an embedding of $\mathbb{E}_{0}$. In
the countable case we use completeness at the limit stage, but here
we use the winning strategy in the (strong) Choquet game. 

The main observation is that Theorem \ref{theorem:meagerembedding}
has a natural analog in the Choquet context:
\begin{thm}
\label{theorem:meagerembedding-Choquet} Suppose that $X$ is regular
topological space, $\Gamma$ is a group of homeomorphisms of $X$
and $\calO$ an orbit of $\Gamma$ such that $X$ is Choquet over
$\calO$. Suppose that for $n<\omega$, $V_{n}\subseteq X\x X$ is
a $G_{\delta}$ subset such that $V_{n}\upharpoonright\overline{\calO}\x\overline{\calO}$
is dense. Then there is a map $\phi:2^{\omega}\to\SS\left(X\right)$
such that for every $y,z\in2^{\omega}$: 
\begin{itemize}
\item $\phi\left(y\right)$ is a non-empty closed $G_{\delta}$ subset of
$X$.
\item If $z\mathrela{\EZero}y$ then there is some $\gamma\in\Gamma$ such
that $\gamma\cdot\phi\left(z\right)=\phi\left(y\right)$.
\item If $\twiddle z\mathrela{\EZero}y$ then $\left(\mbox{\ensuremath{\phi}}\left(y\right)\times\mbox{\ensuremath{\phi}}\left(z\right)\right)\subseteq\bigcap_{n<\omega}V_{n}$.
\end{itemize}
\end{thm}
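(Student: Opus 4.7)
The plan is to mimic the proof of Theorem \ref{theorem:meagerembedding}, replacing completeness with Player B's winning strategy $\sigma$ in the strong Choquet game over $\calO$, and letting $\phi(y)$ be a non-empty closed $G_\delta$ set instead of a single point. As preparation, I would write each $V_n$ as an intersection of open sets; each such open set contains $V_n$ and so is itself dense in $\overline{\calO}\times\overline{\calO}$. Enumerating them all and taking cumulative finite intersections, I may assume I have a single decreasing sequence of open sets $(W_n)_{n<\omega}$ with $\bigcap_n W_n = \bigcap_n V_n$ and each $W_n$ dense in $\overline{\calO}\times\overline{\calO}$.

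I would then recursively construct $\gamma_n \in \Gamma$, open sets $X_n$, and points $x_n \in X_n \cap \calO$, writing $\gamma_s := \prod_{i<|s|}\gamma_i^{s(i)}$ for $s \in 2^{<\omega}$. The invariants to maintain at stage $n$ are: $(\mathrm{i})$ $\overline{X_{n+1}} \subseteq X_n \cap \gamma_n^{-1} X_n$; $(\mathrm{ii})$ for each $s \in 2^n$ the sequence $(\gamma_{s \upharpoonright k} X_k,\, \gamma_{s \upharpoonright k} x_k)_{k \le n}$ is a legal Player A play, so $\sigma$ produces a round-$n$ response which I call $V_s^n$, and additionally $\gamma_{s'} X_{n+1} \subseteq V_{s' \upharpoonright n}^n$ for every $s' \in 2^{n+1}$; $(\mathrm{iii})$ for $s,t \in 2^{n+1}$ differing at position $n$, $\overline{\gamma_s X_{n+1}} \times \overline{\gamma_t X_{n+1}} \subseteq W_n$. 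For the inductive step, $Y_n := \bigcap_{s \in 2^n} \gamma_s^{-1} V_s^n$ is an open neighborhood of $x_n$ because each $V_s^n$ contains the corresponding Player A point $\gamma_s x_n$; and the separation condition in $(\mathrm{iii})$, expressed in the pair $(a,b) := (x_{n+1}, \gamma_n x_{n+1})$, becomes finitely many constraints of the form $(a,b) \in (\gamma_u \times \gamma_v)^{-1}(W_n)$, which jointly define an open set dense in $\overline{\calO}\times\overline{\calO}$ by homeomorphism invariance. Choose $(a,b) \in (Y_n \times Y_n) \cap (\calO \times \calO)$ satisfying all of these, pick $\gamma_n \in \Gamma$ with $\gamma_n a = b$ (possible since $\calO$ is a single orbit), set $x_{n+1} = a$, and use regularity to shrink to an $X_{n+1}$ satisfying $(\mathrm{i})$--$(\mathrm{iii})$.

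Define $\phi(y) := \bigcap_n \overline{\gamma_{y \upharpoonright n} X_n} = \bigcap_n \gamma_{y \upharpoonright n} X_n$ (the two intersections agree by the nesting in $(\mathrm{i})$); this is simultaneously closed and $G_\delta$. Non-emptiness comes from $(\mathrm{ii})$: along any branch $y$ the sequence of Player A moves is a legal play against $\sigma$, so the intersection of Player B's responses is non-empty and, by squeezing, coincides with $\phi(y)$. If $y \mathrela{\EZero} z$ and $y,z$ agree past some $K$, then $\gamma_{y \upharpoonright n} = (\gamma_{y \upharpoonright K} \gamma_{z \upharpoonright K}^{-1})\, \gamma_{z \upharpoonright n}$ for all $n \ge K$, which yields $\phi(y) = (\gamma_{y \upharpoonright K} \gamma_{z \upharpoonright K}^{-1}) \cdot \phi(z)$. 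If $\twiddle (y \mathrela{\EZero} z)$, then for every $n$ there exists $m \ge n$ with $y(m) \neq z(m)$; condition $(\mathrm{iii})$ at stage $m$ together with monotonicity of the $W_n$ gives $\phi(y) \times \phi(z) \subseteq \overline{\gamma_{y \upharpoonright m+1} X_{m+1}} \times \overline{\gamma_{z \upharpoonright m+1} X_{m+1}} \subseteq W_m \subseteq W_n$, whence $\phi(y) \times \phi(z) \subseteq \bigcap_n V_n$.

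The main obstacle is that $\sigma$ will produce genuinely different responses along different branches of $2^{<\omega}$, so a single $X_{n+1}$ must simultaneously fit inside each of the $2^{n+1}$ translated responses $\gamma_{s'}^{-1} V_{s' \upharpoonright n}^n$. That is exactly why I form $Y_n$ as a finite intersection and then work inside $Y_n \cap \gamma_n^{-1} Y_n$: the finiteness of $2^{n+1}$ at every stage is what allows a single winning strategy to control the construction along all continuum-many branches, replacing the Cauchy-sequence-plus-completeness device that drove the proof of Theorem \ref{theorem:meagerembedding}.
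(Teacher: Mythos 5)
Your proof is correct and fleshes out the paper's one-paragraph sketch, which says precisely to replace the diameter-control condition of Theorem \ref{theorem:meagerembedding} with refinement according to Player B's winning strategy --- exactly what you do by running $2^{n}$ parallel plays and intersecting to form $Y_{n}=\bigcap_{s\in 2^{n}}\gamma_{s}^{-1}V_{s}^{n}$. Note that you correctly read ``Choquet over $\calO$'' as the \emph{strong} Choquet game over $\calO$, which is the hypothesis actually supplied by Theorem \ref{theorem:embedding-Choquet} and the one needed here: the point $\gamma_{s}x_{n}$ chosen from $\calO$ is what forces each response $V_{s}^{n}$ to contain it, so that $Y_{n}$ is a non-empty open neighbourhood of $x_{n}$ and the recursion can continue.
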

\begin{proof}
The proof follows along the lines of the proof of Theorem \ref{theorem:meagerembedding}.
The main difference is that in condition (2) in the construction,
instead of controlling the diameter of the open sets, one has to refine
them so that they obey the winning strategy of player B in the suitable
Choquet game over $\calO$. 
\end{proof}
From this we get the following analog of \ref{theorem:embedding}:
\begin{thm}
\label{theorem:embedding-Choquet} Suppose that $X$ is a topological
space, $\sequence{R_{n}}{n\in\Nn}$ is a sequence of $F_{\sigma}$
subsets of $X\x X$, $\Gamma$ is a group of homeomorphisms of $X$,
and $\mathcal{O}\subseteq X$ is an orbit of $\Gamma$ with the property
that for all $n\in\Nn$ and open sets $U\subseteq X$ intersecting
$\mathcal{O}$, there are distinct $x,y\in\calO\cap U$ with $\calO\cap\left(R_{n}\right)_{x}\cap\left(R_{n}\right)_{y}=\emptyset$.
If $X$ is strong Choquet over $\calO$ then the conclusion of Theorem
\ref{theorem:meagerembedding-Choquet} holds with $V_{n}=\twiddle R_{n}$. 
\end{thm}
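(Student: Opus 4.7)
The plan is to derive Theorem~\ref{theorem:embedding-Choquet} directly from Theorem~\ref{theorem:meagerembedding-Choquet} by taking $V_{n}=\twiddle R_{n}$ and verifying the two hypotheses that the meager-embedding theorem places on the sequence $\sequence{V_{n}}{n\in\Nn}$. Since each $R_{n}$ is $F_{\sigma}$, each $V_{n}=\twiddle R_{n}$ is $G_{\delta}$, so that hypothesis is immediate. The remaining structural assumptions (the topology on $X$, the group $\Gamma$, the orbit $\calO$, and the strong Choquet condition over $\calO$, which in particular gives Choquet over $\calO$) are all carried over from the statement of Theorem~\ref{theorem:embedding-Choquet}. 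So the only real work is to check that $V_{n}\upharpoonright\overline{\calO}\x\overline{\calO}$ is dense in $\overline{\calO}\x\overline{\calO}$.

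For the density I would just transcribe the corresponding step from the proof of Theorem~\ref{theorem:embedding} into this setting. Fix $n\in\Nn$ and open sets $U,V\subseteq X$ both meeting $\overline{\calO}$; since they are open they must then meet $\calO$ itself. Apply the hypothesis of Theorem~\ref{theorem:embedding-Choquet} to $n$ and $U$ to obtain distinct points $x,y\in\calO\cap U$ with $\calO\cap\left(R_{n}\right)_{x}\cap\left(R_{n}\right)_{y}=\emptyset$. Now pick any $z\in\calO\cap V$. Since $z\in\calO$ but $z\notin\left(R_{n}\right)_{x}\cap\left(R_{n}\right)_{y}$, at least one of the pairs $\left(x,z\right)$ or $\left(y,z\right)$ avoids $R_{n}$ and hence lies in $\twiddle R_{n}=V_{n}$. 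This pair also lies in $\left(U\x V\right)\cap\left(\calO\x\calO\right)$, and in particular in $\left(U\x V\right)\cap\left(\overline{\calO}\x\overline{\calO}\right)$, witnessing density.

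With both hypotheses verified, Theorem~\ref{theorem:meagerembedding-Choquet} applies and produces the desired map $\phi:2^{\omega}\to\SS\left(X\right)$ with its three listed properties. I do not expect any serious obstacle here: all of the Choquet-game bookkeeping is already packaged inside Theorem~\ref{theorem:meagerembedding-Choquet}, so what remains in Theorem~\ref{theorem:embedding-Choquet} is merely the density reduction, a line-by-line analog of the argument used in the Polish-space version Theorem~\ref{theorem:embedding}.
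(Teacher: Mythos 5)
Your reduction is exactly what the paper intends (it gives no explicit proof of this theorem, merely asserting that it follows from Theorem \ref{theorem:meagerembedding-Choquet}), and your verification that each $\twiddle R_{n}\upharpoonright\overline{\calO}\x\overline{\calO}$ is dense is a faithful transcription of the corresponding step in the proof of Theorem \ref{theorem:embedding}. The only loose end --- that Theorem \ref{theorem:meagerembedding-Choquet} formally assumes $X$ regular while the present statement does not --- is an imprecision already present in the paper's own formulations, not something your argument introduces.
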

And:
\begin{cor}
\label{cor:our situation-Choquet} Let $T$ be any first order theory
with language $L$, let $\alpha$ be any ordinal and $M$ a model.
Let $Y$ be a subspace of $S_{\alpha}\left(M\right)$ that is closed
under $\equiv_{L}^{\alpha,M}$. Suppose that there is
\begin{enumerate}
\item Some $\xx\in Y$. 
\item A countable sub-language $L'$ of $L$, a countable set $M'\prec M\upharpoonright L'$
and a countable sub-tuple of the first $\alpha$ variables which for
simplicity we will assume to be the initial segment of length $\beta$.
\item A countable subgroup $\Sigma\leq\Autf\left(\C\right)$ of automorphisms
that fix $M'$ and $M$ setwise.
\end{enumerate}
Such that:
\begin{enumerate}
\item With the topology induced on $Y$ by $L'$, $M'$, and $\beta$ (the
one generated by formulas in $L'_{\beta}\left(M'\right)$), $Y$ is
strong Choquet over $\Sigma\cdot\xx$.
\item For every open set $U\ni\xx$ in the induced topology and for all
$N\in\Nn$, there exist some $\sigma\in\Sigma$ such that $\sigma^{*}\left(\xx\right)\in U$
and, letting $\xx'=\xx\upharpoonright L_{\beta}\left(M'\right)$,
$N<d_{\beta}'\left(\sigma^{*}\left(\xx'\right),\xx'\right)$ ($d_{\beta}'$
is the Lascar metric of the language $L'$).
\end{enumerate}
Then there is a map $\phi:2^{\omega}\to\SS\left(Y\right)$ such that
for every $y,z\in2^{\omega}$: 
\begin{itemize}
\item $\phi\left(y\right)$ is a non-empty closed $G_{\delta}$ subset of
$Y$.
\item If $z\mathrela{\EZero}y$ then there is a some $\gamma\in\Gamma$
such that $\gamma\cdot\phi\left(z\right)=\phi\left(y\right)$.
\item If $\twiddle z\mathrela{\EZero}y$ then $\left(\mbox{\ensuremath{\phi}}\left(y\right)\times\mbox{\ensuremath{\phi}}\left(z\right)\right)\cap\mathordi{\equiv_{L}^{\alpha,M}}=\emptyset$.
\end{itemize}
\end{cor}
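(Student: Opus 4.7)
The plan is to apply Theorem \ref{theorem:embedding-Choquet} to $Y$ equipped with the topology induced by $L'$, $M'$, $\beta$. I would take $\Gamma$ to be the group of homeomorphisms of $Y$ induced by $\Sigma$: since each $\sigma\in\Sigma$ fixes both $M$ and $M'$ setwise, the induced map $\sigma^{*}$ on $S_{\alpha}(M)$ permutes the sub-basic open sets defined by $L'_{\beta}(M')$-formulas, hence acts as a homeomorphism of $Y$ in the induced topology. Setting $\calO=\Sigma\cdot\xx$, hypothesis~(1) of the corollary is exactly that $Y$ is strong Choquet over $\calO$.

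For the $F_\sigma$ sets of Theorem \ref{theorem:embedding-Choquet}, I would let $\pi:Y\to S_{\beta}'(M')$ be the restriction map, which is continuous by the definition of the induced topology, and set $R_{n}=(\pi\times\pi)^{-1}(\{(p',q'):d_{\beta}'(p',q')\leq n\})$. These $R_{n}$ are closed by Remark \ref{rem:exists =00003D for all} applied in the countable sub-language $L'$ over $M'$. To verify the separation hypothesis, given $n$ and an open $U$ meeting $\calO$, pick $\gamma_{0}\in\Sigma$ with $\gamma_{0}^{*}\xx\in U$, then apply hypothesis~(2) to the neighborhood $(\gamma_{0}^{*})^{-1}(U)$ of $\xx$ and the integer $2n$ to obtain $\sigma\in\Sigma$ with $\sigma^{*}\xx\in(\gamma_{0}^{*})^{-1}(U)$ and $d_{\beta}'(\xx',\sigma^{*}\xx')>2n$, and set $x=\gamma_{0}^{*}\xx$, $y=(\gamma_{0}\sigma)^{*}\xx$. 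Since $\gamma_{0}$ restricts to an $L'$-automorphism and Lascar distance is invariant under automorphisms, $d_{\beta}'(\pi(x),\pi(y))=d_{\beta}'(\xx',\sigma^{*}\xx')>2n$, so the triangle inequality rules out any $z\in\calO$ lying within $R_{n}$ of both $x$ and $y$.

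Theorem \ref{theorem:embedding-Choquet} then yields the map $\phi:2^{\omega}\to\SS(Y)$ with the first two bullets of the conclusion immediate, and with the third reading $\phi(y)\times\phi(z)\subseteq\bigcap_{n}\twiddle R_{n}$ whenever $\twiddle z\mathrel{\EZero}y$. The remaining, and only non-bookkeeping, step is to upgrade this to $(\phi(y)\times\phi(z))\cap\mathordi{\equiv_{L}^{\alpha,M}}=\emptyset$, which amounts to verifying $\mathordi{\equiv_{L}^{\alpha,M}}\subseteq\bigcup_{n}R_{n}$. This follows because any $L$-indiscernible sequence is \emph{a fortiori} $L'$-indiscernible, so the $L'$-Lascar distance on $\beta$-tuples is dominated by the $L$-Lascar distance on $\alpha$-tuples; hence if $p\mathrela{\equiv_{L}^{\alpha,M}}q$ is witnessed by $a\models p$, $b\models q$ at finite $L$-Lascar distance, then $d_{\beta}'(\pi(p),\pi(q))$ is finite and $(p,q)\in R_{n}$ for some $n$. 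This language-restriction comparison is the main conceptual point; otherwise the argument runs in direct parallel to the proof of Corollary \ref{cor:our situation}, with Theorem \ref{theorem:embedding-Choquet} replacing Theorem \ref{theorem:embedding} and the winning strategy in the strong Choquet game replacing completeness.
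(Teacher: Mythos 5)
Your proposal is correct and follows essentially the same route as the paper, which simply says the argument is parallel to Corollary \ref{cor:our situation} via Theorem \ref{theorem:embedding-Choquet} and isolates exactly the point you identify as the main one: that $\mathordi{\equiv_{L}^{\alpha,M}}$-equivalence of $p,q$ forces their $L'_{\beta}\left(M'\right)$-restrictions to be at finite $L'$-Lascar distance (equivalently, non-equivalence of the restrictions implies non-equivalence of $p,q$). Your write-up just spells out the bookkeeping (the induced homeomorphisms, the pullback sets $R_{n}$, and the triangle-inequality verification) that the paper leaves implicit.
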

\begin{proof}
Similar to  \ref{cor:our situation}. Note that if $p,q\in S_{\alpha}\left(M\right)$
and $p\upharpoonright L'_{\beta}\left(M'\right),q\upharpoonright L'_{\beta}\left(M'\right)$
are not $\mathordi{\equiv_{L}^{\beta,M'}}$ equivalent, then $p,q$
are not $\mathordi{\equiv_{L}^{\alpha,M}}$-equivalent. 
\end{proof}
The following lemma will not be used directly, but its proof will
give insight into the proof of Main theorem \ref{MainThmB}. 
\begin{lem}
\label{lem:Choquet - countable basis}Suppose $\left(X,\tau\right)$
is a Choquet space with topology $\tau$. Let $B\subseteq\SS\left(X\right)$
be a base for $\tau$, and assume it is closed under finite intersections.
Let $B_{0}\subseteq B$. Then there exists $B_{0}\subseteq B_{1}\subseteq B$
such that $\left|B_{1}\right|\leq\left|B_{0}\right|+\aleph_{0}$ and
$\left(X,\tau_{B_{1}}\right)$ is Choquet, where $\tau_{B_{1}}$ is
the topology generated by $B_{1}$.\end{lem}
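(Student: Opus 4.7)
My plan is to enlarge $B_0$ by closing it under the outputs of a canonical refinement of a winning strategy $\sigma$ for player B in the Choquet game on $(X,\tau)$, and then to read off a winning strategy for B on $(X,\tau_{B_1})$. Since $B_1$ will be closed under finite intersections (hence a base for $\tau_{B_1}$), a routine mental-refinement argument shows it is enough to exhibit a winning strategy for the restricted Choquet game on $(X,\tau_{B_1})$ in which A is confined to playing elements of $B_1$: given an A-move $U \in \tau_{B_1}$ in the full game, B refines $U$ to a basic $U' \in B_1$ with $U' \subseteq U$, feeds $U'$ to the restricted strategy, and plays the response.

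The construction is an $\omega$-stage closure. Along any finite sequence $\bar U = (U_0,\ldots,U_n)$ of non-empty elements of $B$, recursively define the \emph{shadow} B-responses $\tilde V_i = \sigma(U_0,\tilde V_0,\ldots,U_i) \in \tau$ together with a canonical basic refinement $V_i' \in B$ satisfying $\emptyset \neq V_i' \subseteq \tilde V_i$, picked using a fixed well-ordering of $B$; crucially, $V_i'$ depends only on $(U_0,\ldots,U_i)$ and on $\sigma$. Call $\bar U$ \emph{legal} if $U_{i+1} \subseteq V_i'$ for every $i<n$. Set $B_1^{(0)} = B_0$, and inductively take $B_1^{(k+1)}$ to be the closure of $B_1^{(k)}$ in $B$ under finite intersections together with the rule ``for every legal $\bar U \in (B_1^{(k)})^{<\omega}$, adjoin the associated $V_n'$''. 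Let $B_1 = \bigcup_{k<\omega} B_1^{(k)}$. Since each stage adjoins at most $|B_1^{(k)}| + \aleph_0$ new sets, an immediate count gives $|B_1| \leq |B_0| + \aleph_0$, and $B_1$ is closed under finite intersections by construction.

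It remains to verify that $\sigma'(U_0,V_0',\ldots,U_n) := V_n'$ is a winning strategy for B in the restricted Choquet game on $(X,\tau_{B_1})$. Each $V_n'$ lies in $B_1$ by construction and $V_n' \subseteq \tilde V_n \subseteq U_n$, so B's moves are legal. For the winning condition, fix an infinite $\sigma'$-play $(U_0,V_0',U_1,V_1',\ldots)$. Since $U_{n+1} \subseteq V_n' \subseteq \tilde V_n$ for each $n$, the shadow sequence $(U_0,\tilde V_0,U_1,\tilde V_1,\ldots)$ is a legal $\sigma$-play in $(X,\tau)$, so $\bigcap_n \tilde V_n \neq \emptyset$. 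Interleaving the nested chains $U_0 \supseteq \tilde V_0 \supseteq U_1 \supseteq \tilde V_1 \supseteq \cdots$ and $U_0 \supseteq V_0' \supseteq U_1 \supseteq V_1' \supseteq \cdots$ yields $\bigcap_n V_n' = \bigcap_n U_n = \bigcap_n \tilde V_n \neq \emptyset$, so $\sigma'$ wins. The main delicate point is that $V_i'$ must be a function of only $(U_0,\ldots,U_i)$ (and not of the refinements chosen in other histories); this is what allows the $\omega$-stage closure to terminate within cardinality $|B_0| + \aleph_0$, rather than spiralling out of control.
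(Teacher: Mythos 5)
Your proof is correct and follows essentially the same route as the paper's: an $\omega$-stage closure of $B_0$ under canonical basic refinements of the winning strategy's responses along finite legal histories of basic moves, followed by a shadow-play argument showing the resulting strategy wins in $\tau_{B_1}$. The one point you flag as delicate (that the refinement depends only on the history of A's moves) is exactly the bookkeeping the paper carries out via its sets $V_s$ indexed by finite sequences $s$.
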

\begin{proof}
Let $St$ be a winning strategy for player B in the Choquet game of
$\left(X,\tau\right)$. Let $s=\left\langle U_{i}\left|\, i\leq n\right.\right\rangle $
be a finite sequence of elements of $B_{0}$. Suppose $s$ consists
of a legal $n+1$-play of player A, where player B plays his moves
according to $St$ for $i<n$. Let $V_{s}$ be a nonempty basic open
set contained in player B's play in the $n$'th round of his move
according to $St$. Let $B_{0}^{1}$ be the closure under finite intersections
of $B_{0}\cup\set{V_{s}}{s\in B_{0}^{<\omega}}$. This is a subset
of $B$. Repeat this recursively to construct $B_{0}^{n}$ for $n<\omega$,
and let $B_{1}=\bigcup_{n<\omega}B_{0}^{n}$. Then $B_{1}$ satisfies
the cardinality demand. Let us see that $\left(X,\tau_{B_{1}}\right)$
is Choquet. For this we must describe a winning strategy for player
B.

So suppose $\left\langle \left(U_{i},V_{i}\right)\left|\, i<n\right.\right\rangle $
is a legal play of the Choquet game in $\tau_{B_{1}}$ (where $U_{i}$
is played by player A and $V_{i}$ is played by player B), and player
A chooses $U_{n}$. Suppose that:
\begin{itemize}
\item There are basic open sets $U_{i}'\in B_{1}$ and open sets $V_{i}'\in\tau$
for $i<n$ such that $\left\langle \left(U'_{i},V_{i}'\right)\left|\, i<n\right.\right\rangle $
is a play of the Choquet game compatible with $St$, $U_{i}'\subseteq U_{i}$
and $V_{i}\subseteq V_{i}'$. 
\end{itemize}
Let $U_{n}'\in B_{1}$ be such that $U_{n}'\subseteq U_{n}$. There
is some $m<\omega$ such that $U_{i}'\subseteq B_{0}^{m}$ for all
$i<n$ and let $s=\left\langle U_{i}'\left|\, i\leq n\right.\right\rangle $.
By construction of $B_{0}^{m+1}$, $V_{s}\in B_{1}$ so let player
B play $V_{s}$. 

If this does not hold, let player B play any set. 

Now it is easy to see that if player B plays according to this strategy,
then he will win the game.
\end{proof}

\section{\label{sec:The-small-case}The small case}

Here we prove Main Theorem \ref{MainThmA} under the assumption that
a consequence of smallness holds, namely that the conclusion of Fact
\ref{fac:Newelski} (4) holds. This result is superseded by Theorem
\ref{thm:T is not smooth} in the next section, and the reader may
skip it if desired.

Assume that $T$ is a complete theory in a countable language $L$
and that $\C$ is a monster model for $T$.
\begin{claim}
\label{cla:extension so that sigma is an automorphism}Suppose that
$A$ is a countable set and that $\left\{ \sigma_{i}\left|\, i<\omega\right.\right\} $
is a set of automorphisms of $\C$. Then there is a countable model
$N\supseteq A$ such that $\sigma_{i}\upharpoonright N$ is an automorphism
of $N$ for all $i<\omega$.\end{claim}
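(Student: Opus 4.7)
The plan is a standard downward L\"owenheim--Skolem construction with a closure condition. I will build $N$ as a union of an increasing chain of countable sets $N_0 \subseteq N_1 \subseteq \ldots$, arranged so that the union is simultaneously an elementary submodel of $\mathfrak{C}$ containing $A$ and closed under every $\sigma_i$ and $\sigma_i^{-1}$.

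More precisely, set $N_0 = A$. Given a countable $N_n \subseteq \mathfrak{C}$, let $N_{n+1}$ be a countable set with $N_n \subseteq N_{n+1} \subseteq \mathfrak{C}$ satisfying:
\begin{enumerate}
\item $N_{n+1}$ contains $\sigma_i(a)$ and $\sigma_i^{-1}(a)$ for every $i \leq n$ and every $a \in N_n$.
\item $N_{n+1}$ contains, for every $L$-formula $\varphi(x, \bar{y})$ and every tuple $\bar{b}$ from $N_n$ such that $\mathfrak{C} \models \exists x\, \varphi(x,\bar{b})$, some witness $c \in \mathfrak{C}$ with $\mathfrak{C} \models \varphi(c,\bar{b})$.
\end{enumerate}
Both requirements add only countably many elements (since $L$ is countable and $N_n$ is countable), so $N_{n+1}$ can be chosen countable. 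Let $N = \bigcup_{n<\omega} N_n$.

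Then $N$ is countable and contains $A$. By Tarski--Vaught, condition (2) applied at each stage ensures $N \prec \mathfrak{C}$. For each fixed $i < \omega$, every element of $N$ lies in some $N_n$ with $n > i$, so by condition (1), $\sigma_i(N) \subseteq N$ and $\sigma_i^{-1}(N) \subseteq N$; hence $\sigma_i \upharpoonright N$ is a bijection from $N$ onto $N$. Since $\sigma_i$ is an automorphism of $\mathfrak{C}$ and preserves all $L$-formulas, and $N \prec \mathfrak{C}$, the restriction $\sigma_i \upharpoonright N$ is an automorphism of $N$.

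There is no real obstacle here; the only point to watch is the interleaving of the two closure conditions (Skolem witnesses and $\sigma_i^{\pm 1}$-images), which is handled cleanly by iterating both at each stage and taking the union, as is standard. The assumption that there are only countably many $\sigma_i$ is essential: we enumerate them and at stage $n$ we only need to close under the first $n+1$ of them, so each $N_{n+1}$ remains countable, while every $\sigma_i$ is eventually taken into account.
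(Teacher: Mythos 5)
Your proof is correct and takes essentially the same approach as the paper: an increasing chain of countable sets, closed at each stage under the automorphisms (and their inverses) and under Skolem witnesses, with the union giving the desired countable elementary submodel. The paper folds the Löwenheim--Skolem step into "let $M_n$ be a countable model containing $\ldots$" and closes under all $\sigma_i^{(j)}$, $i<\omega$, $j\in\mathbb{Z}$, at each stage, whereas you unfold the Tarski--Vaught argument explicitly and close under only the first $n+1$ automorphisms at stage $n$; both variants are routine and equivalent.
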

\begin{proof}
Let $M_{0}$ be some model containing $A$, and for $n>0$, let $M_{n}$
be a countable model containing $\bigcup_{j\in\mathbb{Z},i<\omega}\sigma_{i}^{\left(j\right)}\left(M_{n-1}\right)$.
Let $N=\bigcup_{n<\omega}M_{n}$. \end{proof}
\begin{defn}
\label{def:nice models}Call a countable model $M$ of $T$ \emph{nice}
if the following conditions hold:
\begin{enumerate}
\item For every pair of finite tuples $a,b\in M^{k}$, if $a\equiv_{L}^{k}b$
then there is a Lascar strong automorphism $\sigma$ of $M$ (i.e.,
$\sigma\in\Autf\left(\C\right)\cap\Aut\left(M\right)$) that maps
$a$ to $b$. Moreover, $\sigma$ has minimal bound (see Fact \ref{fac:Lascar Strong types, automorphisms, connection}
(2)) among all automorphisms in $\Autf\left(\C\right)$ that map $a$
to $b$. 
\item For every finite tuple $a\in M^{k}$, and every $n<\omega$, if there
are $c_{1},c_{2}\in\C^{k}$ such that $c_{1}\equiv_{L}^{k}a\equiv_{L}^{k}c_{2}$
and $d_{k}\left(c_{1},c_{2}\right)>n$, then there are such $c_{1},c_{2}$
in $M^{k}$. 
\item For all finite tuples $a,b\in M^{k}$ and $a'\in M^{k'}$ and every
$n<\omega$, if there is some $b'$ such that $d_{k+k'}\left(a\concat a',b\concat b'\right)\leq n$,
then there is some such $b'\in M^{k'}$.
\end{enumerate}
\end{defn}
\begin{lem}
\label{lem:Nice-models-exist.}Nice models exist. Moreover, for every
countable set $A$, there is a nice model $M$ that contains it.\end{lem}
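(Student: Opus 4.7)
The plan is to construct $M$ as the union of an increasing chain $M_{0}\subseteq M_{1}\subseteq\ldots$ of countable elementary submodels of $\C$ with $A\subseteq M_{0}$, arranged so that at each stage we throw in the witnesses needed for (1), (2) and (3). At stage $n+1$ I would do the following. First, for every finite tuple $a\in M_{n}^{<\omega}$ and every $n'<\omega$, if there exist $c_{1},c_{2}\in\C^{<\omega}$ witnessing condition (2) for $(a,n')$, pick such a pair; similarly, for every $a,a'\in M_{n}^{<\omega}$, $b\in M_{n}^{<\omega}$ and $n'<\omega$, if there is a $b'\in\C^{<\omega}$ witnessing condition (3), pick one. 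These are countably many elements. Second, for every pair of finite tuples $a,b\in M_{n}^{<\omega}$ with $a\equiv_{L}b$, let $m_{a,b}=\min\set{m<\omega}{\exists\sigma\in\Autf(\C)\text{ bounded by }m\text{ with }\sigma(a)=b}$ (which exists as a minimum over $\Nn$), and choose one $\sigma_{a,b}\in\Autf(\C)$ bounded by $m_{a,b}$ taking $a$ to $b$. This produces a countable family of automorphisms.

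Now I would apply Claim \ref{cla:extension so that sigma is an automorphism} to obtain a countable model $M_{n+1}\prec\C$ that contains $M_{n}$, contains all the witnesses picked above, and on which every automorphism $\sigma_{a,b}$ chosen at any stage $k\leq n$ restricts to an automorphism. The usual bookkeeping (fix an enumeration in type $\omega$ of the cumulative countable family of $\sigma$'s chosen through stages $\leq n$, and apply Claim \ref{cla:extension so that sigma is an automorphism} to that enumeration together with $M_{n}$ and the newly picked witnesses) makes this work. Put $M=\bigcup_{n<\omega}M_{n}$; it is a countable elementary substructure of $\C$ containing $A$.

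To verify niceness: any finite tuple from $M$ already lies in some $M_{n}$, so conditions (2) and (3) are handled by construction — the witnesses chosen at stage $n+1$ sit inside $M_{n+1}\subseteq M$, using that the existence of such witnesses in $\C$ is unchanged when enlarging the ambient model. For (1), given $a,b\in M^{k}$ with $a\equiv_{L}b$, both lie in some $M_{n}$, so $\sigma_{a,b}$ was chosen at stage $n+1$; by the closure built into Claim \ref{cla:extension so that sigma is an automorphism}, $\sigma_{a,b}$ restricts to an automorphism of every $M_{m}$ with $m>n$, hence of $M$. It is Lascar strong with bound $m_{a,b}$, which is by definition the minimum among all of $\Autf(\C)$, so a fortiori minimal among those lying in $\Aut(M)$.

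The only real obstacle is the interaction between condition (1) and keeping $M$ countable: we must simultaneously close under a countable collection of automorphisms of $\C$ that is itself being generated during the construction. This is handled cleanly by Claim \ref{cla:extension so that sigma is an automorphism}, whose iterative union absorbs any countable set of automorphisms into a countable submodel. The rest — conditions (2) and (3) — is a routine Henkin-style closure, and the elementarity of $M$ follows from the Tarski--Vaught union of an elementary chain.
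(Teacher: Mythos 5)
Your proposal is correct and is exactly the paper's argument: the paper's proof is the one-line sketch "let $M_{0}$ be a countable model containing $A$, recursively choose $M_{n+1}$ to satisfy (1)--(3) relative to $M_{n}$ using Claim \ref{cla:extension so that sigma is an automorphism}, and set $M=\bigcup_{n<\omega}M_{n}$," and your write-up just supplies the routine bookkeeping (witness-picking for (2) and (3), cumulative closure under the chosen automorphisms for (1)) that the paper leaves implicit.
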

\begin{proof}
Let $M_{0}$ be any countable model containing $A$. Recursively choose
$M_{n+1}$ to satisfy (1)--(3) relative to $M_{n}$ (using Claim \ref{cla:extension so that sigma is an automorphism})
and set $M=\bigcup_{n<\omega}M_{n}$.
\end{proof}
Fix a countable ordinal $\alpha$ and a pseudo $G_{\delta}$ set $Y$.
Assume that:
\begin{assumption}
\label{ass:weak assumption }
\begin{enumerate}
\item $\alpha$ is infinite. 
\item The Lascar strong type of every finite sub-tuple of a tuple from $Y$
is $d$-bounded. 
\end{enumerate}
\end{assumption}
\begin{rem}
By Fact \ref{fac:Newelski} (4), if $T$ is small, then for finite
tuples, $\mathordi{\equiv_{KP}}=\mathordi{\equiv_{L}}$, so this assumption
is satisfied when $\alpha$ is infinite if $T$ is small, and Corollary
\ref{cor:Conjecture 1} is trivial for finite $\alpha$ (given Fact
\ref{fac:Newelski} (1)). \end{rem}
\begin{thm}
\label{thm:Main assumption A}Main Theorem \ref{MainThmA} holds under
Assumption \ref{ass:weak assumption }. 

Namely, suppose $\alpha$ and $Y$ are as above and for some $\bar{a}\in Y$,
$\left[\bar{a}\right]_{\mathordi{\equiv_{L}^{\alpha}}}$ is not not
$d$-bounded. Then $\mathordi{\equiv_{L}^{\alpha}}\upharpoonright Y$
is non-smooth.\end{thm}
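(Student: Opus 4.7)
The plan is to apply Corollary~\ref{cor:our situation}: first produce a countable family of Lascar strong automorphisms of $\C$ with the right local behavior, then use Claim~\ref{cla:extension so that sigma is an automorphism} to select a countable model $M\ni\bar{a}$ closed under all of them, and finally set $\xx=\tp(\bar{a}/M)\in Y_{M}$ and verify the hypotheses of the corollary at $\xx$. Since $Y$ is closed under $\equiv_{L}^{\alpha}$, the observation just after Fact~\ref{fac:Change the model} gives that $Y_{M}$ is a $G_{\delta}$ subspace of $S_{\alpha}(M)$, hence Polish, and Proposition~\ref{prop:G_delta changing the model} transfers non-smoothness of $\equiv_{L}^{\alpha,M}\upharpoonright Y_{M}$ back to $\equiv_{L}^{\alpha}\upharpoonright Y$.

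For each finite $S\subseteq\alpha$ and each $N<\omega$, I would construct $\sigma_{S,N}\in\Autf(\C)$ satisfying $\sigma_{S,N}(\bar{a})|_{S}=\bar{a}|_{S}$ and $d_{\alpha}(\bar{a},\sigma_{S,N}(\bar{a}))>N$. By Assumption~\ref{ass:weak assumption }(2) there is $k_{S}<\omega$ bounding Lascar distance on $[\bar{a}|_{S}]_{\equiv_{L}^{|S|}}$. Using the unboundedness of $[\bar{a}]_{\equiv_{L}^{\alpha}}$, pick $\bar{b}\in\C^{\alpha}$ with $\bar{b}\equiv_{L}^{\alpha}\bar{a}$ and $d_{\alpha}(\bar{a},\bar{b})>N+2k_{S}$. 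Since $\bar{b}|_{S}\equiv_{L}^{|S|}\bar{a}|_{S}$ forces $d_{|S|}(\bar{b}|_{S},\bar{a}|_{S})\le k_{S}$, Remark~\ref{rem:distance at most doubled} yields $\tau\in\Autf(\C)$ bounded by $2k_{S}$ with $\tau(\bar{b}|_{S})=\bar{a}|_{S}$. Picking $\sigma_{0}\in\Autf(\C)$ with $\sigma_{0}(\bar{a})=\bar{b}$ (Fact~\ref{fac:Lascar Strong types, automorphisms, connection}(2)) and setting $\sigma_{S,N}=\tau\circ\sigma_{0}$ gives $\sigma_{S,N}(\bar{a})|_{S}=\tau(\bar{b}|_{S})=\bar{a}|_{S}$ and $d_{\alpha}(\bar{a},\sigma_{S,N}(\bar{a}))\ge d_{\alpha}(\bar{a},\bar{b})-2k_{S}>N$.

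Since $\alpha$ is countable, the family $\{\sigma_{S,N}\}$ is countable. Applying Claim~\ref{cla:extension so that sigma is an automorphism} to a countable set containing the entries of $\bar{a}$ together with this family yields a countable model $M\ni\bar{a}$ on which every $\sigma_{S,N}$ restricts to an automorphism. To verify Corollary~\ref{cor:our situation} at $\xx=\tp(\bar{a}/M)$: any basic open $U\ni\xx$ has the form $[\varphi(v_{i_{1}},\ldots,v_{i_{n}};\bar{m})]$ with $\models\varphi(\bar{a}|_{S},\bar{m})$, where $S=\{i_{1},\ldots,i_{n}\}$. Given $N$, the automorphism $\sigma=\sigma_{S,N}\in\Autf(\C)\cap\Aut(M)$ induces a homeomorphism $\sigma^{*}$ of $S_{\alpha}(M)$ fixing $Y_{M}$ setwise (because $Y$ is $\equiv_{L}^{\alpha}$-invariant and $\sigma$ is Lascar strong). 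From $\sigma(\bar{a})|_{S}=\bar{a}|_{S}$ we get $\models\varphi(\sigma(\bar{a}),\bar{m})$, so $\sigma^{*}(\xx)\in U$; and because $\bar{a},\sigma(\bar{a})\in M^{\alpha}$ uniquely determine their respective types in $S_{\alpha}(M)$, $d_{\alpha}(\sigma^{*}(\xx),\xx)=d_{\alpha}(\sigma(\bar{a}),\bar{a})>N$.

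The conceptual point that most needs care is that the single countable family $\{\sigma_{S,N}\}$ has to witness the conclusion of the corollary at every basic open $U\ni\xx$. This works because the only relevant datum from $U$ is the finite support $S$ of its defining formula: different parameters $\bar{m}\in M$ produce different formulas witnessing $U$ but do not change which coordinates of $\bar{a}$ must be preserved, and $\sigma_{S,N}$ was built to fix all of $\bar{a}|_{S}$ pointwise rather than merely its type over some particular parameter set. Consequently the nice-model apparatus of Definition~\ref{def:nice models} and Lemma~\ref{lem:Nice-models-exist.} is not needed in this special case; it will become essential only after Assumption~\ref{ass:weak assumption }(2) is dropped in Section~\ref{sec:The-general-case}.
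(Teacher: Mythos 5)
Your proof is correct and takes a genuinely cleaner route than the paper's. The paper builds a ``nice model'' $M$ with three abstract closure properties (Definition~\ref{def:nice models}), sets $\xx=\tp(\bar{a}/M)$, and then, for each pair $(U,N)$, extracts the required automorphism through a multi-step argument: shrink $U$ to a neighborhood of the form $[x=\bar{a}\upharpoonright k]$, find a finite extension $c\concat c'$ of $c=\bar{a}\upharpoonright k$ whose Lascar class has large diameter, pull two far-apart representatives of that class into $M$ via niceness property (2), place tuples of the form $c\concat c''$, $c\concat c'''$ near them in $M$ via niceness property (3) and Remark~\ref{rem:distance at most doubled}, and finally obtain the automorphism from niceness property (1). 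You instead compute the needed countable family $\{\sigma_{S,N}\}$ of Lascar strong automorphisms directly from Assumption~\ref{ass:weak assumption }(2), Fact~\ref{fac:Lascar Strong types, automorphisms, connection}(2) and Remark~\ref{rem:distance at most doubled}, and only then close a model under them using Claim~\ref{cla:extension so that sigma is an automorphism}, bypassing Definition~\ref{def:nice models} entirely. The underlying idea is the same in both arguments --- the model must be chosen closed under sufficiently many Lascar strong automorphisms, and since $\bar{a},\sigma(\bar{a})\in M^\alpha$ are the unique realizations of their types over $M$, the Lascar distance transfers exactly from tuples to types --- but your version folds the paper's ``find far-apart tuples in $M$, then find an automorphism between them'' into a single explicit construction of each $\sigma_{S,N}$. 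One small inaccuracy: the closing remark that the nice-model apparatus ``becomes essential'' in Section~\ref{sec:The-general-case} is not right; the proof of Theorem~\ref{thm:T is not smooth} does not use nice models at all, relying instead on weak genericity and properness and constructing $M$ and $p$ simultaneously formula by formula.
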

\begin{proof}
Choose a nice model $M$ (by Lemma \ref{lem:Nice-models-exist.})
that contains $\bar{a}$. 

Let $\xx=p=\tp\left(\bar{a}/M\right)$. We shall show that corollary
\ref{cor:our situation} applies with $Y$ there being $Y_{M}$ (see
Proposition \ref{prop:G_delta changing the model}).

Suppose $U$ is some open set containing $p$, and $N$ is some number.
In general, $U$ has the form $\left[\varphi\right]$ for some $\varphi\in L_{\alpha}\left(M\right)$.
But in our case, since $M\models\varphi\left(\bar{a}\right)$, we
may replace $U$ with a smaller open neighborhood of $p$ defined
by a formula of the form $x=c$, where $x$ is the tuple of first
$k$ variables and $c=\bar{a}\upharpoonright k$. 

Let $B$ be a bound on the diameter of $\left[c\right]_{\mathordi{\equiv_{L}^{k}}}$.
Since the class of $\bar{a}$ is not of bounded diameter, by compactness
there must be some finite extension of $c$ to a longer sub-tuple
$c\concat c'=\bar{a}\upharpoonright\left(k+k'\right)$ such that the
$\mathordi{\equiv_{L}^{k+k'}}$-class of $c\concat c'$ has diameter
greater than $2N+4B$. 

There are two tuples $f_{1}\concat f_{1}'$ and $f_{2}\concat f_{2}'$
in $\C$ and $\left[c\concat c'\right]{}_{\mathordi{\equiv_{L}^{k+k'}}}$
such that $d_{k+k'}\left(f_{1}\concat f_{1}',f_{2}\concat f_{2}'\right)>2N+2B$.
Since $M$ is nice, we may assume that these tuples are in $M$. 

By choice of $B$, niceness of $M$ and Remark \ref{rem:distance at most doubled},
there are $c''$ and $c'''$ in $M$ such that
\[
d_{k+k'}\left(f_{1}\concat f_{1}',c\concat c''\right),d_{k+k'}\left(f_{2}\concat f_{2}',c\concat c'''\right)\leq2B.
\]
So $d_{k+k'}\left(c\concat c'',c\concat c'''\right)>2N$. It follows
that for one of $c''$, $c'''$, say $c''$, $d_{k+k'}\left(c\concat c',c\concat c''\right)>N$
(but $c\concat c'\equiv_{L}^{k+k'}c\concat c''$).

Let $\sigma$ be a Lascar strong automorphism of $M$ that maps $c\concat c'$
to $c\concat c''$. Since $\sigma$ fixes $c$, $q=\sigma^{*}\left(p\right)\in U$.
But $q$ is realized by a tuple that contains $c\concat c''$, and
hence $d_{\alpha}\left(q,p\right)>N$. 
\end{proof}

\section{\label{sec:The-general-case}The countable case}

Assume that $\alpha$ is a countable ordinal, $T$ is a complete theory
in a countable language $L$ and $\C$ is a monster model for $T$.
\begin{defn}
For a formula $\alpha\left(x,a\right)$ over a tuple $a$ and an automorphism
$\sigma$, $\sigma\left(\alpha\right)=\alpha\left(x,\sigma\left(a\right)\right)$. 
\end{defn}

\begin{defn}
\label{def:class generic}Suppose $C\subseteq\C^{\alpha}$ is an $\mathordi{\equiv_{L}^{\alpha}}$-class.
A formula $\varphi\in L_{\alpha}\left(\C\right)$ is said to be $C$-generic
if finitely many translates of it under $\Autf\left(\C\right)$ cover
$C$. The formula $\varphi$ is said to be $C$-weakly generic if
there is a non-$C$-generic formula $\psi\in L_{\alpha}\left(\C\right)$
such that $\varphi\vee\psi$ is $C$-generic. A partial $p\subseteq L_{\alpha}\left(\C\right)$
is said to be $C$-generic ($C$-weakly generic) if all its formulas
are.\end{defn}
\begin{claim}
\label{cla:ideal}The formulas which are not $C$-weakly generic form
an ideal.\end{claim}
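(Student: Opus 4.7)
The plan is to verify the two defining properties of an ideal in the Boolean algebra of formulas in $L_{\alpha}(\C)$: \emph{downward closure} under logical implication, and closure under finite disjunction. I will work directly from Definition \ref{def:class generic}, with no further model-theoretic input needed; this is a purely combinatorial/Boolean-algebraic check.

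For downward closure, the key observation is that both $C$-genericity and $C$-weak genericity are \emph{upward} closed: if $\varphi \to \varphi''$ and $\varphi$ is $C$-generic, then the same finitely many $\Autf(\C)$-translates that cover $C$ witness that $\varphi''$ is also $C$-generic (since each translate of $\varphi$ is contained in the corresponding translate of $\varphi''$). The same reasoning, applied to the disjunction with a fixed witness $\psi$, shows that $C$-weak genericity propagates upward as well. Taking contrapositives, being non-weakly-generic is downward closed, which is exactly one half of the ideal property.

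For closure under disjunction, suppose $\varphi_{1}$ and $\varphi_{2}$ are both non-weakly-generic and, for contradiction, that $\varphi_{1} \vee \varphi_{2}$ is weakly generic, as witnessed by some non-generic $\psi$ with $\varphi_{1} \vee \varphi_{2} \vee \psi$ $C$-generic. The natural case split is on whether $\varphi_{2} \vee \psi$ is $C$-generic. If it is \emph{not} generic, then rewriting $\varphi_{1} \vee \varphi_{2} \vee \psi$ as $\varphi_{1} \vee (\varphi_{2} \vee \psi)$ exhibits $\varphi_{1}$ as weakly generic (with $\varphi_{2} \vee \psi$ serving as the non-generic witness), contradicting the hypothesis on $\varphi_{1}$. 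If instead $\varphi_{2} \vee \psi$ \emph{is} generic, then since $\psi$ is non-generic, $\varphi_{2}$ itself is weakly generic (with witness $\psi$), contradicting the hypothesis on $\varphi_{2}$. Either way we reach a contradiction, so $\varphi_{1} \vee \varphi_{2}$ is non-weakly-generic.

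There is no real obstacle here; the only subtlety is keeping the direction of the implications straight and correctly identifying, in the disjunction step, which of $\psi$ or $\varphi_{2} \vee \psi$ plays the role of the ``non-generic witness'' in each branch of the dichotomy.
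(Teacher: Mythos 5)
Your proof is correct and follows essentially the same route as the paper. The closure-under-disjunction step is the paper's argument with the case split written out explicitly rather than dispatched in a parenthetical (the paper simply notes that $\varphi_{2}\vee\psi$ must be non-generic since $\varphi_{2}$ is not weakly generic, then concludes $\varphi_{1}$ would be weakly generic), and the downward-closure half is a harmless addition that the paper records separately as Remark \ref{rem:if phi is g then also psi}.
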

\begin{proof}
Suppose $\varphi_{1},\varphi_{2}$ are not $C$-weakly generic and
we have to show that $\varphi_{1}\vee\varphi_{2}$ is also not $C$-weakly
generic. If not, there is some non-$C$-generic $\psi$ such that
$\varphi_{1}\vee\varphi_{2}\vee\psi$ is $C$-generic. But $\varphi_{2}\vee\psi$
is not $C$-generic (since $\varphi_{2}$ is not $C$-weakly generic),
so we get a contradiction.
\end{proof}
By $\varphi\vdash_{C}\psi$ we mean that for every $a\in C$, if $\C\models\varphi\left(a\right)$
then $\C\models\psi\left(a\right)$. 
\begin{rem}
\label{rem:if phi is g then also psi}If $\varphi\vdash_{C}\psi$
and $\sigma\in\Autf\left(\C\right)$ then $\sigma\left(\varphi\right)\vdash_{C}\sigma\left(\psi\right)$,
so if $\varphi$ is (weakly) generic, then so is $\psi$. \end{rem}
\begin{defn}
\label{def:proper generic}Suppose $p$ is a weakly generic (partial)
type over $\C$. Suppose furthermore that $p$ is closed under conjunctions.
Say that it is \emph{$C$-proper} if there is a non-$C$-generic formula
$\psi$ such that for all $\varphi\in p$, $\varphi\vee\psi$ is $C$-generic.
In general, $p$ is $C$-proper when its closure under finite conjunctions
is.
\end{defn}
Fix an $\mathordi{\equiv_{L}^{\alpha}}$-class $C$. When we write
``(weakly) generic'' and ``proper'', we mean ``$C$-(weakly)
generic'' and ``$C$-proper''. 
\begin{example}
\label{exa:generics are proper} If $p$ is generic, then it is proper. 
\end{example}
An easy and well known combinatorial lemma is the following:
\begin{lem}
\label{lem:coloring co-final}If $\left(P,<\right)$ is a directed
order, $k<\omega$ and $f:P\to k$ is some function, then there is
a cofinal $f$-homogeneous set $P_{0}\subseteq P$: there is some
$i<k$ such that $f^{-1}\left(i\right)$ is cofinal. \end{lem}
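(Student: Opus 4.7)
The plan is to argue by contradiction, using finiteness of the color set together with directedness of $P$ to derive an immediate contradiction.

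First I would suppose that no $f^{-1}(i)$ is cofinal in $P$. By the definition of cofinality, for each color $i<k$ I can then pick a witness $p_i\in P$ such that no element of $f^{-1}(i)$ lies above $p_i$, i.e., $\set{q\in P}{q\geq p_i}\cap f^{-1}(i)=\emptyset$.

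Next I would invoke the directedness of $(P,<)$ iteratively (or by a trivial induction on $k$) to produce a single element $p^{*}\in P$ with $p^{*}\geq p_i$ for every $i<k$; here finiteness of $k$ is essential, since directedness only gives upper bounds for finite subsets. Now $f(p^{*})=j$ for some $j<k$, so $p^{*}\in f^{-1}(j)$ and simultaneously $p^{*}\geq p_j$, contradicting the choice of $p_j$.

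Hence the assumption fails, and some $f^{-1}(i)$ must be cofinal. There is no real obstacle: the argument is elementary and uses only the two hypotheses (finitely many colors, directed order) in the obvious way. The only thing worth flagging is that the result is false if either hypothesis is dropped, which indicates that any proof must use them both, and the pigeonhole-plus-directedness argument above does so in the minimal way.
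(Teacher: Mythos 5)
Your proof is correct and is essentially identical to the paper's: both negate cofinality to obtain witnesses $p_i$, use directedness and the finiteness of $k$ to find a common upper bound $p^{*}$, and derive the contradiction from $f(p^{*})$. Nothing further is needed.
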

\begin{proof}
Suppose not. So for each $i<k$, the $f^{-1}\left(i\right)$ is not
cofinal, for some $p_{i}\in P$, for no $q\geq p_{i}$, $f\left(q\right)=i$.
Let $p$ be $\geq p_{i}$ for every $i<k$. Then $p\geq p_{f\left(p\right)}$
--- contradiction. \end{proof}
\begin{lem}
\label{lem:basic adding formula formula}Suppose $p\subseteq L_{\alpha}\left(\C\right)$
is a partial proper type as witnessed by $\psi$. Suppose that $\bigvee_{i<n}\varphi_{i}\vee\psi'$
covers $C$ and that $\psi'\vee\psi$ is non-generic. Then for some
$i<n$, $p\cup\left\{ \varphi_{i}\right\} $ is proper.\end{lem}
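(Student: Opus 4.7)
The plan is to identify a specific witness of the form $\chi_i := \psi \vee \psi' \vee \bigvee_{j \neq i}\varphi_j$ for the properness of $p \cup \{\varphi_i\}$ and to show that it is valid for at least one $i < n$.

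First, I would verify that, regardless of $i$, the formula $\chi_i$ satisfies the disjunction half of the properness requirement: for every $\eta$ in the closure of $p$ under finite conjunctions, $(\eta \wedge \varphi_i) \vee \chi_i$ is generic. This follows from a direct derivation on $C$. Since $\bigvee_j \varphi_j \vee \psi'$ covers $C$, one has
\[
\eta \vdash_C (\eta \wedge \varphi_i) \vee \textstyle\bigvee_{j \neq i}\varphi_j \vee \psi',
\]
and disjoining with $\psi$ yields $\eta \vee \psi \vdash_C (\eta \wedge \varphi_i) \vee \chi_i$. Since $\eta \vee \psi$ is generic by properness of $p$, Remark \ref{rem:if phi is g then also psi} gives that $(\eta \wedge \varphi_i) \vee \chi_i$ is generic. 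Hence $\chi_i$ witnesses the properness of $p \cup \{\varphi_i\}$ provided $\chi_i$ is itself non-generic.

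The proof therefore reduces to exhibiting some $i$ with $\chi_i$ non-generic, and I would proceed by induction on $n$. The base case $n = 1$ is immediate, as $\chi_0 = \psi' \vee \psi$ is non-generic by hypothesis. For the inductive step with $n \geq 2$, I would split on whether $\varphi_k \vee \psi' \vee \psi$ is non-generic for some $k < n$. If so, apply the inductive hypothesis to the list $(\varphi_j)_{j \neq k}$ with $\psi'$ replaced by $\varphi_k \vee \psi'$; the new ``$\psi' \vee \psi$'' is $\varphi_k \vee \psi' \vee \psi$, which is still non-generic, and the covering condition is preserved, yielding some $j \neq k$ with $p \cup \{\varphi_j\}$ proper. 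Otherwise, $\varphi_k \vee \psi' \vee \psi$ is generic for every $k$, whence every $\chi_i$ is also generic (since $\chi_i \supseteq \varphi_k \vee \psi' \vee \psi$ for any $k \neq i$); here I would turn to Claim \ref{cla:ideal} applied to the generic disjunction $\bigvee_i(\eta \wedge \varphi_i) \vee \psi' \vee \psi$, combined with the cofinal coloring lemma (Lemma \ref{lem:coloring co-final}) on $p$ directed by conjunction, to extract a single index $i$ for which $\eta \wedge \varphi_i$ is weakly generic for every $\eta \in p$.

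The main obstacle is this last case. When every $\varphi_k \vee \psi' \vee \psi$ is generic, the ideal and coloring arguments yield only weakly generic information, with possibly $\eta$-dependent witnesses, and upgrading this to a single uniform non-generic witness $\chi$ for $p \cup \{\varphi_i\}$ is the technically delicate step; I expect that one must abandon the canonical $\chi_i$ and instead design a more refined non-generic witness tailored to this sub-case, using the full strength of the assumption that $\psi' \vee \psi$ (not merely $\psi$) is non-generic.
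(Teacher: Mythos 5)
Your reduction to the case where every $\varphi_{k}\vee\psi'\vee\psi$ is generic is sound, as is the observation that the canonical candidate $\chi_{i}=\psi\vee\psi'\vee\bigvee_{j\neq i}\varphi_{j}$ always satisfies the genericity half of properness; the inductive absorption of a non-generic $\varphi_{k}\vee\psi'\vee\psi$ into $\psi'$ also works. But the step you flag as ``technically delicate'' is precisely the content of the lemma, and your proposal does not supply it. In the remaining case, Claim \ref{cla:ideal} together with Lemma \ref{lem:coloring co-final} and the hypothesis that $\psi'\vee\psi$ is non-generic only yields a single index $i$ such that $\eta\wedge\varphi_{i}$ is weakly generic for every $\eta$ in the conjunction-closure of $p$, with an $\eta$-dependent witness. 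Weak genericity of a type is strictly weaker than properness --- the uniformity of the witness is exactly what Definition \ref{def:proper generic} adds --- so this is a genuine gap at the central step, which you acknowledge but do not close.

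The paper closes it with a ``maximal generic tail'' device. For $\zeta\in p$ (assumed closed under conjunctions) and $k\leq n$ set $\zeta_{k}=\bigvee_{k\leq i<n}\left(\varphi_{i}\wedge\zeta\right)\vee\psi'\vee\psi$. Then $\zeta_{0}$ is generic (by the same derivation you give) while $\zeta_{n}=\psi'\vee\psi$ is not, so there is a maximal $k_{\zeta}$ with $\zeta_{k_{\zeta}}$ generic. Lemma \ref{lem:coloring co-final} gives a single $k$ for which $\set{\zeta}{k_{\zeta}=k}$ is cofinal in $\left(p,\vdash\right)$; fix $\chi\in p$ with $k_{\chi}=k$. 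The uniform non-generic witness for $p\cup\left\{ \varphi_{k}\right\} $ is $\chi_{k+1}$, non-generic by maximality of $k_{\chi}$. Uniformity then follows from monotonicity: given $\zeta\in p$, pick $\zeta''\vdash\zeta\wedge\chi$ with $k_{\zeta''}=k$; then $\zeta''_{k}=\left(\zeta''\wedge\varphi_{k}\right)\vee\zeta''_{k+1}$ is generic, and since $\zeta''\wedge\varphi_{k}\vdash\zeta\wedge\varphi_{k}$ and $\zeta''_{k+1}\vdash\chi_{k+1}$, Remark \ref{rem:if phi is g then also psi} gives that $\left(\zeta\wedge\varphi_{k}\right)\vee\chi_{k+1}$ is generic. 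The idea you are missing is to build the witness from a single fixed $\chi$ taken deep in $p$ and from the \emph{tail} $\bigvee_{i>k}\left(\varphi_{i}\wedge\chi\right)$, rather than from $\bigvee_{j\neq i}\varphi_{j}$: conjoining with $\chi$ is what forces the witness to be non-generic, and cofinality plus monotonicity is what makes one witness serve all $\zeta$.
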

\begin{proof}
We may assume that $p$ is closed under conjunctions. For each formula
$\zeta\in p$, by assumption we have: 
\[
\zeta\vee\psi\vdash_{C}\bigvee_{i<n}\left(\varphi_{i}\wedge\zeta\right)\vee\psi'\vee\psi.
\]

So by Remark \ref{rem:if phi is g then also psi}, the right hand
side is generic. 

For each $\zeta\in p$ and $k<n$, let $\zeta_{k}=\bigvee_{k\leq i<n}\left(\varphi_{i}\wedge\zeta\right)\vee\psi'\vee\psi$.
Let $k_{\zeta}<n$ be maximal such that $\zeta_{k}$ is generic (must
exist since $\zeta_{0}$ is generic), so $\zeta_{k+1}$ is non-generic.
By Lemma \ref{lem:coloring co-final}, for some $k<n$, the set $\set{\zeta}{k_{\zeta}=k}$
is cofinal in the order $\zeta_{1}>\zeta_{2}\Leftrightarrow\zeta_{1}\vdash\zeta_{2}$.
Fix some $\chi\in p$ such that $k_{\chi}=k$. We will show that $p\cup\left\{ \varphi_{k}\right\} $
is proper, as witnessed by $\chi_{k+1}$. 

Suppose $\zeta\in p$. Let $\zeta'=\zeta\wedge\chi$, and $\zeta''\vdash\zeta'$
be such that $k_{\zeta''}=k$. Then $\left(\zeta''\wedge\varphi_{k}\right)\vee\zeta''_{k+1}$
is generic. Since $\zeta''\wedge\varphi_{k}\vdash\zeta\wedge\varphi_{k}$
and $\zeta''_{k+1}\vdash\chi_{k+1}$, $\left(\zeta\wedge\varphi_{k}\right)\vee\chi_{k+1}$
is also generic and we are done.
\end{proof}

\begin{lem}
\label{lem:proper extension for one formula}If $p\subseteq L_{\alpha}\left(\C\right)$
is a partial proper type, then for every formula $\varphi\in L_{\alpha}\left(\C\right)$,
either $p\cup\left\{ \varphi\right\} $ is proper or $p\cup\left\{ \neg\varphi\right\} $
is proper. \end{lem}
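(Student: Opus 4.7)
The plan is to invoke Lemma \ref{lem:basic adding formula formula} directly with a trivial partition of $C$. Since $p$ is proper, we may replace it by its closure under conjunctions, and fix a witness $\psi$: a non-$C$-generic formula such that $\zeta \vee \psi$ is $C$-generic for every $\zeta$ in (the closure of) $p$.

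Now apply Lemma \ref{lem:basic adding formula formula} with $n=2$, $\varphi_0 = \varphi$, $\varphi_1 = \neg\varphi$, and $\psi' = \psi$. The hypothesis $\bigvee_{i<2}\varphi_i \vee \psi'$ covers $C$ holds trivially because $\varphi \vee \neg\varphi$ already covers $\C^\alpha$. The hypothesis that $\psi' \vee \psi$ is non-$C$-generic holds because $\psi' \vee \psi = \psi$, which is non-generic by the choice of $\psi$. The lemma then furnishes $i \in \{0,1\}$ for which $p \cup \{\varphi_i\}$ is proper, which is exactly one of the two alternatives in the conclusion.

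There is no serious obstacle; the work has already been done in Lemma \ref{lem:basic adding formula formula}. The only thing to notice is that tertium non datur provides a ``covering of $C$'' for free, so Lemma \ref{lem:basic adding formula formula} can be applied with the degenerate choices $\varphi_0 = \varphi$, $\varphi_1 = \neg\varphi$ and $\psi' = \psi$, turning the ``finite covering'' lemma into a dichotomy statement. In particular, this establishes that a proper partial type can always be extended to a complete proper type by a standard Zorn/transfinite recursion (though that is not asserted here).
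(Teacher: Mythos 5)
Your proof is correct and is essentially the same argument as the paper's: apply Lemma \ref{lem:basic adding formula formula} with $n=2$, $\varphi_0=\varphi$, $\varphi_1=\neg\varphi$, noting that $\varphi\vee\neg\varphi$ trivially covers $C$. The only cosmetic difference is that you take $\psi'=\psi$ while the paper takes $\psi'=\bot$; in both cases $\psi'\vee\psi=\psi$ is non-generic, so either choice works.
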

\begin{proof}
Apply Lemma \ref{lem:basic adding formula formula} with $n=2$, $\varphi_{0}=\varphi$,
$\varphi_{1}=\neg\varphi$ and $\psi'=\bot$ (i.e., $\forall x\left(x\neq x\right)$). 

Note that if we do not care about properness but only about weak genericity,
then this follows directly from Claim \ref{cla:ideal}. \end{proof}
\begin{prop}
\label{prop:proper extension with automorphisms}Suppose that $p\subseteq L_{\alpha}\left(\C\right)$
is a partial proper type, and $\varphi\in p$. Then there are $\sigma_{0},\ldots\sigma_{n-1}\in\Autf\left(\C\right)$
such that for every $\sigma\in\Autf\left(\C\right)$, there exists
some $i<n$ such that $p\cup\left\{ \sigma\left(\sigma_{i}\left(\varphi\right)\right)\right\} $
is proper. \end{prop}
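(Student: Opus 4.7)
The plan is to feed a single list of translates of $\varphi\vee\psi$ into Lemma \ref{lem:basic adding formula formula}, and exploit the $\Autf(\C)$-invariance of $C$ to handle every $\sigma$ uniformly. Since $p$ is proper, fix a non-$C$-generic formula $\psi$ witnessing it. Because $\varphi\in p$, the formula $\varphi\vee\psi$ is $C$-generic, so there exist $\sigma_0,\ldots,\sigma_{n-1}\in\Autf(\C)$ such that
\[
\bigvee_{i<n}\sigma_i(\varphi\vee\psi)\;=\;\bigvee_{i<n}\sigma_i(\varphi)\;\vee\;\bigvee_{i<n}\sigma_i(\psi)
\]
covers $C$. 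These are the $\sigma_i$'s the proposition asks for.

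Now let $\sigma\in\Autf(\C)$ be arbitrary. Since $C$ is an $\mathordi{\equiv_L^\alpha}$-class, it is setwise fixed by $\Autf(\C)$, so applying $\sigma$ to the above covering yields that
\[
\bigvee_{i<n}\sigma\sigma_i(\varphi)\;\vee\;\psi^{*}
\qquad\text{where }\psi^{*}=\bigvee_{i<n}\sigma\sigma_i(\psi),
\]
also covers $C$. To apply Lemma \ref{lem:basic adding formula formula} with $\varphi_i:=\sigma\sigma_i(\varphi)$ and $\psi':=\psi^{*}$, it remains to check that $\psi^{*}\vee\psi$ is non-$C$-generic.

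This is the one point that requires attention, and it is where the uniformity of using a single $\psi$ pays off: $\psi^{*}\vee\psi$ is a finite disjunction of $\Autf(\C)$-translates of $\psi$ (the translates $\sigma\sigma_0(\psi),\ldots,\sigma\sigma_{n-1}(\psi),\psi$). If it were $C$-generic, then finitely many further $\Autf(\C)$-translates of $\psi^{*}\vee\psi$ would cover $C$; composing, this would produce finitely many $\Autf(\C)$-translates of $\psi$ alone covering $C$, contradicting the non-genericity of $\psi$. Hence $\psi^{*}\vee\psi$ is non-$C$-generic.

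Lemma \ref{lem:basic adding formula formula} now applies and delivers some $i<n$ with $p\cup\{\sigma\sigma_i(\varphi)\}$ proper, as desired. The main (minor) obstacle is exactly the non-genericity check of $\psi^{*}\vee\psi$; it is handled by the composition-absorption argument above, which is why we insist on using a single witness $\psi$ throughout rather than the different non-generic formulas one might produce class-by-class.
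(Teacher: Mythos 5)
Your proof is correct and follows essentially the same route as the paper's: both fix a single witness $\psi$, cover $C$ by translates $\sigma_i(\varphi\vee\psi)$, translate the cover by $\sigma$, observe that a finite disjunction of $\Autf(\C)$-translates of $\psi$ is still non-generic, and then invoke Lemma \ref{lem:basic adding formula formula}. Your composition-absorption argument just makes explicit the non-genericity step that the paper states without elaboration.
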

\begin{proof}
Since $p$ is proper, there is some non-generic formula $\psi\left(x\right)$
that witnesses it. In particular, there is some $n<\omega$ and some
$\sigma_{0},\ldots,\sigma_{n-1}\in\Autf\left(\C\right)$ such that
$\bigvee_{i<n}\sigma_{i}\left(\varphi\vee\psi\right)$ covers $C$. 

Suppose that $\sigma\in\Autf\left(\C\right)$. Then $\sigma\left(\bigvee_{i<n}\sigma_{i}\left(\varphi\vee\psi\right)\right)=\bigvee_{i<n}\sigma\left(\sigma_{i}\left(\varphi\vee\psi\right)\right)$
also covers $C$. Since $\psi$ is non-generic, $\psi'=\bigvee_{j<n}\sigma\left(\sigma_{j}\left(\psi\right)\right)$
is also non-generic and so is $\psi'\vee\psi$. 

Now we can apply Lemma Lemma \ref{lem:basic adding formula formula}.\end{proof}
\begin{prop}
\label{prop:d(x,a)<=00003D1 is generic}Let $a\in C$. Consider the
partial type $q\left(x,y\right)=d_{\alpha}\left(x,y\right)\leq1$.
Then $q\left(x,a\right)\subseteq L_{\alpha}\left(a\right)$ is generic
and hence proper.\end{prop}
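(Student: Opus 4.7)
The plan is as follows. By Example~\ref{exa:generics are proper}, any generic partial type is proper, so it suffices to show that $q(x,a)$ is $C$-generic. Unpacking the definition, I fix an arbitrary $\varphi(x,a) \in q(x,a)$; thus $d_{\alpha}(x,a) \leq 1 \vdash \varphi(x,a)$, and the task is to produce finitely many $\sigma_{i} \in \Autf(\C)$ whose translates $\varphi(x,\sigma_{i}(a))$ jointly cover $C$. I proceed by contradiction, assuming some such $\varphi$ is not $C$-generic.

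Since $C$ is the $\Autf(\C)$-orbit of $a$, failure of $C$-genericity means that for every finite $F \subseteq C$ there exists $b \in C$ with $\neg\varphi(b,a')$ for all $a' \in F$. Iterating, I build a sequence $(b_{n})_{n<\omega}$ of elements of $C$ satisfying $\neg\varphi(b_{n},b_{i})$ for every $i < n$, by applying the hypothesis at stage $n$ to $F = \{b_{0},\ldots,b_{n-1}\}$. Next I apply Fact~\ref{fac:indiscernibles exist}(1) to $(b_{n})_{n<\omega}$, working in the language expanded by constants for the parameter $a$ (equivalently, extracting an indiscernible sequence over $\{a\}$), to produce an infinite indiscernible sequence $(c_{n})_{n<\omega}$ over $\{a\}$. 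The standard EM-type preservation then gives $\neg\varphi(c_{j},c_{i})$ for all $i<j$; and since each $b_{n} \in C$ realizes $\tp(a/\emptyset)$, so does each $c_{n}$, so in particular $c_{0} \equiv a$.

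Now by homogeneity of $\C$ there is $\tau \in \Aut(\C)$ with $\tau(a) = c_{0}$. Applying $\tau$ to the valid implication $d_{\alpha}(x,a) \leq 1 \vdash \varphi(x,a)$ yields $d_{\alpha}(x,c_{0}) \leq 1 \vdash \varphi(x,c_{0})$. On the other hand, by Definition~\ref{def:Lascar distance} the pair $(c_{0},c_{1})$ sits on the infinite indiscernible sequence $(c_{n})_{n<\omega}$, so $d_{\alpha}(c_{0},c_{1}) \leq 1$ (trivially if $c_{0} = c_{1}$, and via the Lascar graph edge otherwise). Hence $\varphi(c_{1},c_{0})$ holds, contradicting $\neg\varphi(c_{1},c_{0})$. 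The only real subtlety is handling the parameter $a$ correctly inside the indiscernible extraction; this is dispatched by the standard device of language expansion, or equivalently the ``over $\{a\}$'' variant of Fact~\ref{fac:indiscernibles exist}(1). The rest of the argument is just a clean combination of the Newelski-style contradiction scheme with the basic fact that indiscernible pairs are automatically at Lascar distance at most one.
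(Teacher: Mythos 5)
Your proof is correct and follows essentially the same route as the paper's: build a sequence of Lascar conjugates of $a$ witnessing non-genericity, extract an indiscernible sequence preserving the EM-condition $\neg\varphi\left(y_{1},y_{0}\right)$, and derive a contradiction from $d_{\alpha}\left(c_{0},c_{1}\right)\leq1$. The only difference is cosmetic: since $\varphi\left(x,y\right)$ is a parameter-free formula of the $\emptyset$-type $q\left(x,y\right)$, the implication $d_{\alpha}\left(x,y\right)\leq1\vdash\varphi\left(x,y\right)$ already holds for every pair, so your transfer via the automorphism $\tau$ and the extraction over $\left\{ a\right\} $ are unnecessary (though harmless).
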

\begin{proof}
Suppose $\varphi\left(x,a\right)$ is a formula in $q\left(x,a\right)$.
Suppose $\varphi$ is non-generic. This means that for every $n$
Lascar strong conjugates $a_{0},\ldots,a_{n-1}$ of $a$, there is
some $a'\in C$ (so another Lascar conjugate of $a$) such that $\neg\varphi\left(a',a_{i}\right)$
holds for all $i<n$. Thus we can construct an infinite sequence $\left\langle a_{i}\left|\, i<\omega\right.\right\rangle $
of Lascar conjugates of $a$ such that for every $i<\omega$, $\neg\varphi\left(a_{i},a_{j}\right)$
holds for all $j<i$. 

By Fact \ref{fac:indiscernibles exist} (1), there is an indiscernible
sequence $\left\langle b_{i}\left|\, i<\omega\right.\right\rangle $
such that for all $j<i<\omega$, $\neg\varphi\left(b_{i},b_{j}\right)$
holds. But this is a contradiction because by definition $d_{\alpha}\left(b_{1},b_{0}\right)\leq1$.\end{proof}
\begin{thm}
\label{thm:T is not smooth}Main Theorem \ref{MainThmA} holds: 

Suppose $T$ is a complete countable first-order theory, $\alpha$
a countable ordinal, and suppose $Y$ is a pseudo $G_{\delta}$ subset
of $\C^{\alpha}$ which is closed under $\mathordi{\equiv_{L}^{\alpha}}$.
If for some $a\in Y$, $\left[a\right]_{\mathordi{\equiv_{L}^{\alpha}}}$
is not $d$-bounded, then $\mathordi{\equiv_{L}^{\alpha}}\upharpoonright Y$
is non-smooth. \end{thm}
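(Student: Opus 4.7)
The plan is to apply Corollary~\ref{cor:our situation} to a complete type $\xx \in S_\alpha(M)$ realized in the $d$-unbounded class $C=[a]_{\equiv_L^\alpha}$, where $M$ is a countable model that we build alongside $\xx$. Since $Y$ is pseudo $G_\delta$ and closed under $\equiv_L^\alpha$, the set $Y_M$ is Polish, and by Proposition~\ref{prop:G_delta changing the model} the non-smoothness of $\equiv_L^\alpha\upharpoonright Y$ is witnessed by the non-smoothness of $\equiv_L^{\alpha,M}\upharpoonright Y_M$ for any countable $M$. So it suffices to produce $\xx \in Y_M$ such that, for every $\varphi \in \xx$ and every $N<\omega$, some $\sigma \in \Autf(\C)$ fixes $M$ setwise, satisfies $\sigma^*(\xx) \in [\varphi]$, and has $d_\alpha(\sigma^*(\xx),\xx)>N$.

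I would construct $\xx$ by a countable recursion dovetailing two task families. Maintain countable elementary submodels $B_0 \subseteq B_1 \subseteq \cdots$ and complete $C$-proper types $\xx_n \in S_\alpha(B_n)$, starting from $B_0 \ni a$ and $\xx_0 \supseteq \{d_\alpha(x,a)\leq 1\}$, which is $C$-generic by Proposition~\ref{prop:d(x,a)<=00003D1 is generic} and hence $C$-proper by Example~\ref{exa:generics are proper}. The \emph{completeness tasks} decide the next formula over the current model via Lemma~\ref{lem:proper extension for one formula} and enlarge $B_n$ to a countable elementary submodel $B_{n+1}$ closed under all previously chosen automorphisms, using Claim~\ref{cla:extension so that sigma is an automorphism}. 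The \emph{richness tasks} enumerate all pairs $(\varphi,N)$ with $\varphi$ already committed to some $\xx_n$ and produce, for each one, a Lascar strong automorphism $\sigma_n$ that will witness the target requirement.

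The key step is the richness task. Given $\varphi \in \xx_n$ and $N$, apply Proposition~\ref{prop:proper extension with automorphisms} to obtain $\tau_0,\ldots,\tau_{k-1} \in \Autf(\C)$ such that, for every $\sigma \in \Autf(\C)$, some $\xx_n \cup \{\sigma\tau_j(\varphi)\}$ is $C$-proper. Let $M_\ast$ be a common bound for the $\tau_j$'s, via Fact~\ref{fac:Lascar Strong types, automorphisms, connection}(3). Using the $d$-unboundedness of $C$ together with Remark~\ref{rem:distance at most doubled}, pick $\sigma \in \Autf(\C)$ with $d_\alpha(\sigma(\bar b),\bar b) > N+4+M_\ast$ for some realization $\bar b \models \xx_n$ in $C$. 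Choose $j$ so that $\xx_n \cup \{\sigma\tau_j(\varphi)\}$ is $C$-proper, and set $\sigma_n = (\sigma\tau_j)^{-1}$; then $\sigma_n^{-1}(\varphi)=\sigma\tau_j(\varphi)$ lies in the extension, and the triangle inequality together with the isometry of $d_\alpha$ under automorphisms gives $d_\alpha(\bar b,\sigma_n(\bar b))>N+4$. Now enlarge $B_n$ to a countable model $B_{n+1}$ closed under $\sigma_n$ (Claim~\ref{cla:extension so that sigma is an automorphism}) and complete the extended partial type to a $C$-proper $\xx_{n+1} \in S_\alpha(B_{n+1})$. Since $\xx_{n+1}$ is complete over the \emph{model} $B_{n+1}$, Remark~\ref{rem:distance 2} gives $d_\alpha(\bar b',\bar b) \leq 2$ and $d_\alpha(\sigma_n(\bar b'),\sigma_n(\bar b)) \leq 2$ for every $\bar b' \models \xx_{n+1}$, whence $d_\alpha(\bar b',\sigma_n(\bar b'))>N$.

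Setting $M=\bigcup_n B_n$ and $\xx=\bigcup_n \xx_n$, the closure discipline guarantees $\sigma_n(M)=M$ setwise for every $n$; the richness tasks ensure that every required pair $(\varphi,N)$ is handled; and Corollary~\ref{cor:our situation} (applied to $Y_M$) then yields the non-smoothness of $\equiv_L^{\alpha,M}\upharpoonright Y_M$, hence of $\equiv_L^\alpha\upharpoonright Y$. The main obstacle is coordinating the three demands placed on each $\sigma_n$: properness of the extended type is enforced through Proposition~\ref{prop:proper extension with automorphisms}, large Lascar movement is enforced through the $d$-unboundedness of $C$, and setwise preservation of $M$ requires both the book-keeping in the completeness tasks and the passage from a single witness $\bar b$ to every realization of the final type, which is where the completeness of each $\xx_n$ over the model $B_n$ becomes essential.
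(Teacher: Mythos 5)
Your plan is essentially the paper's plan: fix $C=[a]_{\equiv_L^\alpha}$, build $M$ and $\xx\in Y_M$ by a countable recursion dovetailing completeness and richness tasks, maintain $C$-properness so that Proposition~\ref{prop:proper extension with automorphisms} stays applicable, and finish with Corollary~\ref{cor:our situation}; the richness step with $\sigma_n=(\sigma\tau_j)^{-1}$ and the triangle inequality against the bound on the $\tau_j$'s is exactly the paper's argument. The genuine gap is in requiring each $\xx_n$ to be a \emph{complete} $C$-proper type in $S_\alpha(B_n)$: the step ``complete the extended partial type to a $C$-proper $\xx_{n+1}\in S_\alpha(B_{n+1})$'' has no supporting lemma. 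Lemma~\ref{lem:proper extension for one formula} preserves properness when deciding a single formula, but in its proof (via Lemma~\ref{lem:basic adding formula formula}) the non-generic witness $\psi$ is replaced by $\chi_{k+1}$, in general a strictly weaker disjunction depending on the current type. Iterating $\omega$-many times to decide every $L_\alpha(B_{n+1})$-formula produces an increasing chain of proper types whose witnesses drift, and the union has no evident single non-generic witness: it is weakly generic (that is preserved formula-by-formula), but properness is exactly the stronger, uniform condition. Since the next richness task applies Proposition~\ref{prop:proper extension with automorphisms} to $\xx_{n+1}$, which needs properness, the recursion does not close. Nor can you simply drop the intermediate completion: you lean on it in ``Since $\xx_{n+1}$ is complete over the model $B_{n+1}$, Remark~\ref{rem:distance 2} gives $d_\alpha(\bar b',\bar b)\le 2$\ldots,'' which is the only route you give from the single chosen $\bar b$ to arbitrary realizations.

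The fix is the paper's own bookkeeping, which makes intermediate completeness (and intermediate properness of anything larger than a finite type) unnecessary. Keep each $p_i$ a \emph{finite} type over a \emph{finite} set $M_i$, maintain only that $p_i\cup q$ is proper where $q=\{d_\alpha(x,a)\le1\}$, and run the distance estimate once at the end: the union $p=\bigcup p_i$ is complete over the model $M=\bigcup M_i$ and consistent with $q$, so $d_\alpha(c,a)\le 3$ for every $c\models p$, giving $d_\alpha(p,\sigma_j^*(p))>N-6$. Properness of the final (or any complete) type is never used. With that change your argument coincides with the paper's proof of Theorem~\ref{thm:T is not smooth}.
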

\begin{proof}
Let $C=\left[a\right]_{\mathordi{\equiv_{L}^{\alpha}}}$. For what
follows when we write proper, we mean $C$-proper. 

We want to apply Corollary \ref{cor:our situation} with $Y$ there
being $Y_{M}$ (See Proposition \ref{prop:G_delta changing the model})
for some countable model $M$. Hence we will construct a pair $\left(M,p\right)$
such that $M$ is a countable model of $T$ and $\xx=p\in Y_{M}$
satisfies the condition in Corollary \ref{cor:our situation}. Translating,
this means that for every formula $\varphi\in p$, and every $N<\omega$,
there exists some Lascar strong automorphism $\sigma$ such that $\sigma\left(M\right)=M$,
$\varphi\in\sigma\left(p\right)$ and $d_{\alpha}\left(\sigma\left(p\right),p\right)>N$. 

Let $q\left(x\right)=d_{\alpha}\left(x,a\right)\leq1$ (as in Proposition
\ref{prop:d(x,a)<=00003D1 is generic}). We construct a sequence $\left\langle \sigma_{i},p_{i},M_{i}\left|\, i<\omega\right.\right\rangle $
such that:
\begin{enumerate}
\item $M_{i}$ is a finite set for all $i<\omega$.
\item $M_{i}\subseteq M_{i+1}$ and $p_{i}\subseteq p_{i+1}$ for all $i<\omega$.
\item For all $i<\omega$, $p_{i}$ is a finite type over $M_{i}$ such
that $p_{i}\cup q$ is proper. 
\item For all $i<\omega$, $\sigma_{i}$ is a Lascar strong automorphism. 
\item For every $i<\omega$ and formula of $\varphi\in L_{1}\left(M_{i}\right)$,
if $\varphi$ is not empty then for some $i<j<\omega$ there is some
$c\in M_{j}$ such that $\varphi\left(c\right)$ holds. 
\item For every $i<\omega$ and $n<\omega$ there exists some $i<j<\omega$
such that $M_{j}$ contains $\bigcup_{-n<l<n,i'<i}\sigma_{i'}^{\left(l\right)}\left(M_{i}\right)$. 
\item For every $i<\omega$ and formula $\varphi\in L_{\alpha}\left(M_{i}\right)$,
there is some $i<j<\omega$ such that $p_{j}$ contains either $\varphi$
or $\neg\varphi$. 
\item For every $i<\omega$, $N<\omega$ and $\varphi\in p_{i}$ there are
some $i<j<j'<\omega$ such that $d_{\alpha}\left(\sigma_{j}\left(a\right),a\right)>N$
and $\sigma_{j}^{-1}\left(\varphi\right)\in p_{j'}$. 
\end{enumerate}
If we succeed, then let $M=\bigcup M_{i}$, $p=\bigcup p_{i}$. $M$
is a model by (5), and $p\in S\left(M\right)$ and even belongs to
$Y_{M}$ by (7) and (3). 

(3), (6) and (8) imply that $\left(M,p\right)$ satisfy the required
condition: for every formula $\varphi\left(x\right)\in p$ (i.e.,
$p\in\left[\varphi\right]$) and $N<\omega$, there is some $\sigma_{j}$
as in (8). By (3), $d_{\alpha}\left(c,a\right)\leq3$ for any $c\models p$
(because there exists some $c'\models p\cup q$, and $d_{\alpha}\left(c,c'\right)\leq2$).
So $d_{\alpha}\left(c,\sigma_{j}\left(a\right)\right)>N-3$. By (6)
$\sigma_{j}\left(M\right)=M$, and so $d_{\alpha}\left(p,\sigma_{j}^{*}\left(p\right)\right)$
is well defined and $>N-6$ (as for any $c\models\sigma_{j}^{*}\left(p\right)$,
$d_{\alpha}\left(c,\sigma_{j}\left(a\right)\right)\leq3$). Finally,
$\varphi\in\sigma_{j}^{*}\left(p\right)$. 

The construction:

Let $M_{0}$ and $p_{0}$ be $\emptyset$. Note that condition (3)
holds by Proposition \ref{prop:d(x,a)<=00003D1 is generic}. 

Now we partition the work so that can satisfy all conditions. In each
stage we take care of one of (5)--(8). 

(5) and (6) are easy (just add some elements to $M_{i}$). (7) can
be achieved by Lemma \ref{lem:proper extension for one formula}. 

For (8) we need some argument. So suppose we are in stage $i+1$ of
the construction and we deal with (8), i.e., we are given $N<\omega$
and $\varphi\in p_{i}$. By Proposition \ref{prop:proper extension with automorphisms},
there are $\tau_{0},\ldots\tau_{n-1}\in\Autf\left(\C\right)$ such
that for every $\sigma\in\Autf\left(\C\right)$, there exists some
$j<n$ such that $q\cup p_{i}\cup\left\{ \sigma\left(\tau_{j}\left(\varphi\right)\right)\right\} $
is proper. There is some bound $k$ on $\tau_{j}$ for all $j<n$.
Let $\sigma\in\Autf\left(\C\right)$ be such that $d_{\alpha}\left(a,\sigma\left(a\right)\right)>N+k$.
By the triangle inequality, $d_{\alpha}\left(a,\sigma\left(\tau_{j}\left(a\right)\right)\right)>N$
for all $j<n$. For some $j<n$, $q\cup p_{i}\cup\left\{ \sigma\left(\tau_{j}\left(\varphi\right)\right)\right\} $
is proper, so let $\sigma_{i+1}=\left(\sigma\circ\tau_{j}\right)^{-1}$
(note that $d_{\alpha}\left(a,\left(\sigma\circ\tau_{j}\right)\left(a\right)\right)=d_{\alpha}\left(a,\left(\sigma\circ\tau_{j}\right)^{-1}\left(a\right)\right)$)
and $p_{i+1}=p_{i}\cup\left\{ \sigma\left(\tau_{j}\left(\varphi\right)\right)\right\} $
and continue. 
\end{proof}

\section{\label{sec:Uncountable-language}The general case}

Here we adapt our techniques to the case where the language is not
necessarily countable. 
\begin{thm}
\label{Thm:MainThmB} Main theorem \ref{MainThmB} holds:

Suppose $T$ is a complete first-order theory, $\alpha$ a small ordinal.
Suppose $Y\subseteq\C^{\alpha}$ is closed under $\mathordi{\equiv_{L}^{\alpha}}$
and for some $a\in Y$, $\left[a\right]_{\mathordi{\equiv_{L}^{\alpha}}}$
is not $d$-bounded. Suppose $Y$ is pseudo strong Choquet. Then there
is a model $M$ of size $\left|T\right|+\left|\alpha\right|$ and
a function $\phi:2^{\omega}\to\SS\left(Y_{M}\right)$ such that for
every $y,z\in2^{\omega}$: 
\begin{itemize}
\item $\phi\left(y\right)$ is a non-empty closed $G_{\delta}$ subset of
$Y_{M}$.
\item If $z\mathrela{\EZero}y$ then there is a some $\gamma\in\Gamma$
such that $\gamma\cdot\phi\left(z\right)=\phi\left(y\right)$.
\item If $\twiddle z\mathrela{\EZero}y$ then $\left(\phi\left(y\right)\times\phi\left(z\right)\right)\cap\mathordi{\equiv_{L}^{\alpha,M}}=\emptyset$.
\end{itemize}
In particular, $\left|Y/\mathordi{\equiv_{L}^{\alpha}}\right|\geq2^{\aleph_{0}}$. \end{thm}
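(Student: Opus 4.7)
The plan is to deduce Theorem \ref{Thm:MainThmB} from Corollary \ref{cor:our situation-Choquet} by constructing the required data $L'\subseteq L$, $M'\subseteq M$, $\beta\leq\alpha$, $\Sigma\subseteq\Autf(\C)$, and $\xx=p\in Y_M$ via a countable bookkeeping construction that mirrors the proof of Theorem \ref{thm:T is not smooth}, with the additional task of arranging the induced countable topology to be strong Choquet over $\Sigma\cdot\xx$.

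Fix $a\in Y$ with $C=[a]_{\equiv_L^\alpha}$ not $d$-bounded, and set $q(x)=d_\alpha(x,a)\leq 1$, so that any realization of $q$ lies in $C\subseteq Y$. I would simultaneously build increasing sequences of finite sets $L'_i\subseteq L$, $M'_i\subseteq\C$, $\Sigma_i\subseteq\Autf(\C)$, and finite partial types $p_i$ over $M'_i$, together with Lascar strong automorphisms $\sigma_i$, so that $p_i\cup q$ is always $C$-proper; the extension steps are governed by Lemma \ref{lem:basic adding formula formula} and Proposition \ref{prop:proper extension with automorphisms}, exactly as in the proof of Theorem \ref{thm:T is not smooth}. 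A standard bookkeeping interleaves: (i) arranging $M'=\bigcup_i M'_i$ to be an $L'$-elementary substructure of an ambient $L$-model $M$ (where $L'=\bigcup_i L'_i$), setwise closed under $\Sigma=\bigcup_i\Sigma_i$; (ii) making $p=\bigcup_i p_i$ a complete element of $S_\beta(M')$ for a fixed countable initial segment $\beta$ of the variables; and (iii) producing, for each $\varphi\in p$ and each $N<\omega$, some $\sigma\in\Sigma$ with $\varphi\in\sigma^*(p)$ and $d'_\beta(\sigma^*(\xx'),\xx')>N$, using the unboundedness of $C$ together with the automorphisms supplied by Proposition \ref{prop:proper extension with automorphisms}.

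The genuinely new ingredient is implementing the strong Choquet property, for which I would interleave a bookkeeping modeled on Lemma \ref{lem:Choquet - countable basis}. Fixing an ambient model $M\supseteq M'$ of size $|T|+|\alpha|$ closed setwise under $\Sigma$, the pseudo strong Choquet hypothesis yields a winning strategy $St$ for player B in the strong Choquet game on $Y_M$ in the full topology generated by $L_\alpha(M)$. At each stage I would enumerate finite partial plays whose A-moves are basic opens in the growing topology $\tau_{L'_\beta(M')}$ and whose played points are drawn from $\Sigma\cdot\xx$, apply $St$ to obtain an open $V\ni x$ refining A's last move, and then add finitely many formulas and parameters to $L'_{i+1}$ and $M'_{i+1}$ so that some basic open of the enlarged topology lies in $V$ and contains $x$. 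Closing under this procedure as in Lemma \ref{lem:Choquet - countable basis} ensures that $(Y_{M'},\tau_{L'_\beta(M')})$ is strong Choquet over $\Sigma\cdot\xx$, so Corollary \ref{cor:our situation-Choquet} applies and produces the map $\phi$, whence $|Y/\equiv_L^\alpha|\geq 2^{\aleph_0}$.

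The main obstacle I anticipate is the mismatch between condition (2) of Corollary \ref{cor:our situation-Choquet}, which demands a lower bound on the \emph{$L'$-Lascar} distance $d'_\beta$, and Proposition \ref{prop:proper extension with automorphisms}, which only supplies automorphisms with large $L$-Lascar distance. Since $L'\subseteq L$ has fewer formulas, more sequences are $L'$-indiscernible and hence $d'_\beta\leq d_\beta$, so the $L$-bound does not transfer for free. To bridge the gap, each insertion of a new $\sigma$ into $\Sigma$ must be paired with the addition to $L'$ of finitely many $L$-formulas that witness $d'_\beta(\sigma(\xx'),\xx')>N$; using that $\{(u,v):d_\beta(u,v)\leq N\}$ is type-definable over $\emptyset$ and that $(\xx',\sigma(\xx'))$ omits this type, one can extract a witnessing family by compactness and absorb it into $L'$ while keeping $L'$ countable throughout.
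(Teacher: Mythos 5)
Your overall architecture coincides with the paper's: reduce to Corollary \ref{cor:our situation-Choquet}, build $L'$, $M'$, $\beta$, $\Sigma$ and $p$ by a countable bookkeeping that re-runs the properness machinery of Theorem \ref{thm:T is not smooth}, and repair the $d'$ versus $d$ mismatch by adding, at each insertion of $\sigma$, finitely many $L$-formulas to $L'$ witnessing the large Lascar distance (the paper does exactly this, witnessing via $a$ and $\sigma(a)$ rather than via the not-yet-existing $\xx'$). That obstacle you flagged is real and your fix for it is the paper's fix.

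However, your treatment of the strong Choquet bookkeeping --- which is the genuinely new and delicate part of this proof --- has a gap. You propose to enumerate partial plays ``whose played points are drawn from $\Sigma\cdot\xx$'' and to apply the strategy $St$ to them; but $\xx$ is only defined after the construction terminates, so during the construction there are no such points to feed to $St$. Worse, even granting a surrogate point, it is not enough to ``add finitely many formulas and parameters to $L'_{i+1}$ and $M'_{i+1}$ so that some basic open of the enlarged topology lies in $V$ and contains $x$'': for player B's move $\left[\psi\right]$ to contain the eventual point $\sigma^{*}\left(\xx\right)$ you must put $\sigma^{-1}\left(\psi\right)$ into the partial type $p$ itself, and an arbitrary such commitment can destroy the invariant that $p\cup q$ is proper, killing all later steps. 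The paper resolves both problems at once: it maintains a countable set $Q$ of surrogate complete types $r\in S_{\alpha}\left(M\right)$; for the current finite datum $\varphi_{n}$ it considers \emph{every} completion $r$ consistent with $\sigma_{n}^{*}\left(q\right)$, extracts from $St$ a formula $\psi_{r}$ with $\left[\psi_{r}\right]\cap Y_{M}$ inside B's response to $\left(\varphi_{n},r\right)$, observes that $\left\{ \varphi_{n}\right\} \cup\sigma_{n}^{*}\left(q\right)\vdash\bigvee_{r}\psi_{r}$, and then uses compactness together with Lemma \ref{lem:basic adding formula formula} to select one $r$ for which adding $\sigma_{n}^{-1}\left(\psi_{r}\right)$ preserves properness; that $r$ is recorded in $Q$, and the final verification simulates the strong Choquet game over $\Sigma\cdot\xx$ by playing the types from $Q$ against $St$. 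Without this selection step your construction does not produce a winning strategy over $\Sigma\cdot\xx$. A smaller instance of the same circularity: the ambient $M$ must be fixed before the construction (since $St$ is a strategy on $Y_{M}$) yet be setwise fixed by all automorphisms used; the paper arranges this in advance by closing $M$ under a pre-chosen countable set $S$ witnessing unboundedness and under witnesses of genericity of formulas over $M$, rather than under the $\Sigma$ produced by the construction.
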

\begin{proof}
The idea is to simultaneously construct a countable language $L'$,
a countable model $M'$ , a countable sub-tuple of the first $\alpha$
variables, an $L'$-type over $M'$ in these variables and a countable
group of Lascar strong automorphisms so that we can apply Corollary
\ref{cor:our situation-Choquet}. Eventually, $\xx$ will be any completion
of the $L'$-type over $M'$ to a complete $L$-type over $M$. 

So we will need a more elaborate argument than the one used in Theorem
\ref{thm:T is not smooth} that will also use the proof of Lemma \ref{lem:Choquet - countable basis}
(but not the lemma itself). That is, we try to construct the winning
strategy along with the model and language. 

Let $C=\left[a\right]_{\mathordi{\equiv_{L}^{\alpha}}}$. For what
follows when we write proper, we mean $C$-proper. Fix a countable
set $S$ of Lascar strong automorphisms that witness that $C$ is
not $d$-bounded, i.e., such that for all $N>0$, there is some $\sigma\in S$
such that $d_{\alpha}\left(a,\sigma\left(a\right)\right)>N$.

Let $M$ be a model of $T$ of size $\left|T\right|+\left|\alpha\right|$
that contains $a$ such that every $\sigma\in S$ fixes $M$ setwise
and for every generic formula over $M$, there are Lascar strong automorphisms
that witness it which fix $M$ setwise. Such a model can be constructed
as in Claim \ref{cla:extension so that sigma is an automorphism}.
Let $\Gamma$ be the group of Lascar strong automorphisms that fix
$M$ setwise. Let $St$ be a strategy for player B that witnesses
that $Y_{M}$ is strong Choquet. 

We construct:
\begin{itemize}
\item A countable sub-language $L'\subseteq L$.
\item A countable model $M'\prec M\upharpoonright L'$.
\item A countable sub-tuple $x'$ of the first $\alpha$ variables. For
notational simplicity we will assume that $x'$ is the first $\beta$
variables for a countable ordinal $\beta$.
\item A complete $L'$-type $p$ over $M_{0}$ in $x'$ which is consistent
with the type $q\left(x\right)=d_{\alpha}\left(x,a\right)\leq1$.
\item A countable subgroup $\Sigma\subseteq\Gamma$ of automorphism that
fix $M'$ and $M$ setwise.
\item A countable set $Q$ of complete types in $S_{\alpha}\left(M\right)$
contained in $Y_{M}$. 
\end{itemize}
Such that:
\begin{enumerate}
\item For every formula $\varphi\in p$ and natural number $N$, there is
an automorphism $\sigma\in\Sigma$ such that $\sigma^{-1}\left(\varphi\right)\in p$
and $d_{\beta}'\left(\sigma^{*}\left(p\right),p\right)>N$ where $d'$
is the Lascar metric when restricted to $L'$.
\item For every $\sigma_{0},\ldots,\sigma_{n}\in\Sigma$, $r_{0},\ldots,r_{n-1}\in Q$
and every sequence of $L'_{\beta}\left(M'\right)$ formulas $\sequence{\left(\varphi_{i},\psi_{i}\right)}{i<n}$
and a formula $\varphi_{n}$ such that:

\begin{enumerate}
\item $\varphi_{i+1}\vdash\psi_{i}\vdash\varphi_{i}$ for all $i<n$.
\item $\psi_{i}\in\sigma_{i}^{*}\left(p\right)$ (in other words, $\sigma_{i}^{*}\left(p\right)$
is in the open set $\left[\psi_{i}\right]$). 
\item $r_{i}$ is a complete extension of $\left\{ \varphi_{i}\right\} $
consistent with $\sigma_{i}^{*}\left(q\right)$.
\item $\varphi_{n}\in\sigma_{n}^{*}\left(p\right)$.
\item For each $i<n$, $\psi_{i}$ is such that $\left[\psi_{i}\right]\cap Y_{M}$
is a basic open subset of player B's move according to $St$ in the
strong Choquet game where player A plays $\varphi_{i}$ and $r_{i}$.
\end{enumerate}

There is a type $r_{n}\in Q$ containing $\varphi_{n}$ and a formula
$\psi_{n}$ in $\sigma_{n}^{*}\left(p\right)\cap r_{n}$ contained
in $\varphi_{n}$ which is a subset of player B's move according to
$St$ in the strong Choquet game described in (e) where in the $n$'th
move player A chooses $\varphi_{n}$ and $r_{n}$. 

\end{enumerate}
For the construction we repeat the proof of Theorem \ref{thm:T is not smooth}
inside $M$. As there, we let $q=d\left(x,a\right)\leq1$, and note
that it is proper. The differences are:
\begin{itemize}
\item [$\star$] We choose our automorphisms from $\Gamma$ (this is no
problem, since they all come from witnesses of genericity of certain
formulas over $M$ composed with an element from $S$ by the proof
of \ref{prop:proper extension with automorphisms}). 
\item [$\star$] We have to take care of $d'$ instead of $d$. So in (8)
there we increase the language $L'$ so that not only $d_{\alpha}\left(\sigma_{j}\left(a\right),a\right)>N$
is true in $L$, but it also true in $L'$. Similarly add some variables
to the tuple of variables we construct so that this remains true when
restricted to these variables. 
\item [$\star$] We add a step to the construction that makes sure that
the set of automorphisms is a group. 
\item [$\star$] For (2), we add a step to the construction. We have to
take care of every choice of $\sigma_{0},\ldots,\sigma_{n}\in\Sigma$,
$r_{0},\ldots,r_{n-1}$, a formula $\varphi_{n}$ and a sequence of
$L'_{\beta}\left(M'\right)$ formulas $\sequence{\left(\varphi_{i},\psi_{i}\right)}{i<n}$
from the language and model constructed thus far that satisfy (a)--(e)
above. We may assume that $\varphi_{n}$ is the conjunction of $\sigma_{n}$
applied to the current finite partial type we have. For every complete
extension $r\in S_{\alpha}\left(M\right)$ of $\left\{ \varphi_{n}\right\} $
consistent with $\sigma_{n}^{*}\left(q\right)$, there is some open
set $r\in V_{r}\subseteq\left[\varphi_{n}\right]\cap Y_{M}$ that
player B plays according to $St$ in the strong Choquet game described
in (e) where in the $n$'th move player A chooses $\varphi_{n}$ and
$r$. Let $\psi_{r}$ be a formula in $L_{\alpha}\left(M\right)$
that contains $r$ (i.e., $\psi_{r}\in r$), $\psi_{r}\vdash\varphi_{n}$
and $\left[\psi_{r}\right]\cap Y_{M}$ is contained in $V_{r}$. It
follows that $\left\{ \varphi_{n}\right\} \cup\sigma_{n}^{*}\left(q\right)\vdash\bigvee\psi_{r}$.
By compactness and by Lemma \ref{lem:proper extension for one formula}
for some $r$, $\sigma_{n}^{*}\left(q\right)\cup\left\{ \varphi_{n},\psi_{r}\right\} $
is proper. So we may add $\sigma_{n}^{-1}\left(\psi_{r}\right)$ to
our partial type. Also we add the symbols appearing in $\psi_{r}$
to the language and the variables appearing in it to the tuple of
variables. Finally, add $r$ to $Q$. 
\end{itemize}
When the construction is done, it is easy to see that letting $\xx$
be any completion of the complete $L'_{\beta}\left(M'\right)$ type
constructed $p$, it satisfies all the demands of Corollary \ref{cor:our situation-Choquet}.
For instance, we need to check that with the topology induced on $Y_{M}$
by $L'$, $M'$, and $\beta$, $Y$ is strong Choquet over $\Sigma\cdot\xx$.
The point is that if player A chooses some basic open set $\left[\varphi\right]$
containing $\sigma^{*}\left(\xx\right)$ for some $\sigma\in\Sigma$,
then by construction there is some formula $\psi$ in $\sigma^{*}\left(p\right)$
(so $\left[\psi\right]$ contains $\sigma^{*}\left(\xx\right)$) that
is contained in $\left[\varphi\right]$ and some type $r_{0}\in Q$
such that $\psi$ is contained in player B's response to $\left(\left[\varphi\right],r_{0}\right)$.
So player B will now choose $\left[\psi\right]\cap Y_{M}$. So we
simulate a game in $Y_{M}$ in which player A chooses types from $Q$,
and player B responds by choosing a basic open subset of what $St$
says. Since $St$ was a winning strategy, the intersection must be
nonempty. 
\end{proof}
\bibliographystyle{alpha}
\bibliography{common}

\end{document}